\newcounter{commento}
\renewcommand{\comment}[1]{
\addtocounter{commento}{1}
\marginpar{\sffamily{\tiny \thecommento #1
\par
}\normalfont}
%\ifthenelse{\boolean{hide}}{}{#1}
}
\renewcommand{\comment}[1]{}
\def\ul{\underline}
\def\sign{\mathrm{sign}}
\def\zero{\vec{0}}
\def\phr{\varphi}
\def\BB{\mathcal{B}}
\def\CC{\mathcal{C}}
\def\DD{\mathcal{D}}
\def\MM{\mathcal{M}}
\def\VV{\mathcal{V}}
\def\Kf{\mathbb{K}}
\def\Cm{\mathbf{C}}
\def\Vm{\mathbf{V}}
\def\Bm{\mathbf{B}}
\def\Mm{M}
\def\Sm{\mathbf{S}}
\def\Km{\mathbf{K}}
\def\Com{\mathcal{C}}
\def\Vom{\mathcal{V}}
\def\Mom{\MM}
\def\Co{\mathcal{C}}
\def\Do{\mathcal{D}}
\def\Mo{\mathcal{M}}
\def\Uo{\mathcal{U}}
\def\So{\mathcal{S}}
\def\ph{\operatorname{ph}}
\def\sgn{\operatorname{sign}}
\def\relint{\operatorname{relint}}
\def\conv{\operatorname{conv}}
\def\pconv{\operatorname{pconv}}
\def\min{\operatorname{min}}
\def\supp{\mathrm{supp}}
\def\min{\mathrm{min}}
\def\fun{\phr_{ \CC}}
\def\cal{\mathcal}
\def\cl{\operatorname{cl}}
\newtheorem{thmA}{Theorem}
\newtheorem{thm}{Theorem}[section]
\newtheorem{lemma}[thm]{Lemma}
\newtheorem{prop}[thm]{Proposition}
\newtheorem{cor}[thm]{Corollary}
\theoremstyle{definition}
\newtheorem{defn}[thm]{Definition}
\newtheorem{remark}[thm]{Remark}
\newtheorem{example}[thm]{Example}
\def\beginwval#1#2{\bgroup% change to \the#1 should be local
  \edef\@savecount{\the\value{#1}}% save old value of counter
  \expandafter\def\csname the#1\endcsname{#2}% new value
  \begin{#1}}% begin environment
\def\endwval#1{\setcounter{#1}{\@savecount}\end{#1}\egroup}
\def\rank{\mathop{\rm rank}}
\def\row{\mathop{\rm row}}
\def\ker{\mathop{\rm ker}}
\begin{document}

%\begin{frontmatter}
\title{Foundations for a theory of complex matroids}
%\\ {\it \normalsize P\MakeLowercase{hirotopes, circuits, duality, and a sad but inevitable absence of vectors.}}}
%\end{center}
\author{Laura Anderson}
%\ead{laura@math.binghamton.edu}
\address{Department of Mathematical Sciences, Binghamton University, Binghamton, NY 13902-6000, USA.}
\author{Emanuele Delucchi}
%\ead{delucchi@math.uni-bremen.de}
\address{Department of Mathematics, University of Bremen, Bibliothekstrasse 1, 28359 Bremen, Germany.}

\maketitle

\begin{abstract}
We explore a combinatorial theory of linear dependency in complex space, {\em complex matroids}, with foundations analogous to those for oriented matroids. We give multiple equivalent axiomatizations of complex matroids,  showing that this theory captures properties of linear dependency, orthogonality, and determinants over $\mathbb C$ in much the same way that oriented matroids capture the same properties over $\mathbb R$. In addition, our complex matroids come with a canonical $S^1$ action analogous to the action of $\mathbb C^*$ on a complex vector space.

Our {\em phirotopes} (analogues of determinants) are the same as those studied previously by Below, Krummeck, and Richter-Gebert \cite{BKR} and Delucchi \cite{Del}. 

We further show that complex matroids cannot have vector axioms analogous to those for oriented matroids. 
\end{abstract}
% \begin{keyword}
% Matroids, Oriented matroids, Complex matroids, Hyperplane arrangements, Grassmann-Pl\"ucker relations, Circuit elimination, Modular pairs, Grassmannians.
% \end{keyword}

%\end{frontmatter}

\section*{Introduction}

Our motivation in this paper lies at the intersection of topology, geometry, and combinatorics. Matroids and oriented matroids have proved to be a valuable tool for studying geometric and topological objects defined in terms of vector spaces. More precisely, matroids constitute a relatively crude tool for studying objects defined over arbitrary vector spaces, while oriented matroids offer a more refined theory for the study of objects defined over $\mathbb R^n$. We offer two examples:
\begin{itemize}
\item To every finite set of hyperplanes in a vector space over a field $\mathbb K$ there is an associated matroid defined by the pattern of intersections of the hyperplanes. In the case $\mathbb K = \mathbb C$, the complement of this set of hyperplanes in the vector space has interesting topology. The cohomology ring of this space depends only on the associated matroid, see \cite{Yu}, but  Rybnikov gave examples of families of hyperplanes having the same matroid but having complements with nonisomorphic fundamental groups \cite{ABC}. However, if the defining equations of the hyperplanes have real coefficients, the data encoded in the corresponding oriented matroid determines the homotopy type of the complement \cite{Sal}.
\item There is a canonical function from the set of rank $r$ subspaces of a vector space $\mathbb K^n$ to the set of rank $r$ matroids on elements $[n]$. When $\mathbb K$ is a topological space, this leads to a topological map from the Grassmannian $G(r,\mathbb K^n)$ to the order complex of the poset of all rank $r$ matroids on $[n]$. This map is topologically uninteresting, because this order complex is contractible. However, when $\mathbb K=\mathbb R$ we get a map from $G(k,\mathbb R^n)$ to the poset of all rank $r$ {\em oriented} matroids on $[n]$, and this map preserves considerable topology (\cite{AD},\cite{Ahomotopy}).
\end{itemize}

Our aim here is a theory of complex matroids that will play a similar role for complex objects to what oriented matroids play for real objects. The hope is for useful tools for studying objects such as complex hyperplane arrangements and complex vector bundles. An early positive sign in this sense is given by the fact that our construction does distinguish the two examples by Rybnikov. The remainder of this introduction will lay out the perspective on matroids and oriented matroids that we wish to generalize.
\vskip 12pt

This paper focuses on axiomatics for complex matroids. To begin with, we briefly  review matroids and oriented matroids. 

A matroid on a finite set $E$ can be given by any of the following:
\begin{enumerate}
\item its set $B(M)$ of {\em bases}
\item its set $V(M)$ of {\em vectors} -- this terminology is not much used in matroid theory, but a {\em vector} of a matroid is just the complement of a {\em flat}
\item its set $C(M)$ of {\em circuits}
\item the "orthogonal complements", in the appropriate sense, of each of the above: {\em cobases $B^*(M)$, covectors $V^*(M)$,} and {\em cocircuits} $C^*(M)$.
\end{enumerate}

The terminology is that these sets are {\em cryptomorphic} -- each of these sets determines the other sets. There are axiomatic definitions of each of these sets -- so any one of these definitions can be taken as the definition of a matroid.

For example, a matrix $\cal M$ over a field, with columns $v_{e_1}, \ldots, v_{e_n}$ indexed by $E$, gives a matroid $M$ with
\begin{enumerate}
\item $B(M)=\{A\subseteq E: \{v_a: a\in A\}\mbox{ is a basis for the column space of $\cal M$}\}$ 
\item $V(M)=\{\supp(x):x\in\ker(\cal M)\}$
\item $C(M)$ is the set of minimal nonzero elements of $V(M)$
\item $B^*(M)=\{E-S: S\in B(M)$
\item $V^*(M)=\{\supp(x):x\in\row(\cal M)\}$
\item $C^*(M)$ is the set of minimal nonzero elements of $V^*(M)$
\end{enumerate}
In this case $\cal M$ is called a {\em realization} of $M$.

Moving beyond matroids, one might consider some specific field $\Kf$ and look for additional structure to put on matroids to reflect properties special to matrices over $\Kf$. 

In the case $\Kf =\mathbb R$ (or any ordered field) this search has been wildly successful: the result is {\em oriented matroids}, introduced by Folkman and Lawrence \cite{FL}. 
Oriented matroids are matroids with extra structure. Broadly put, each data set
described above for matroids realized by a matrix $M$ over a field $\Kf$ says whether various elements of $\Kf$ are zero or nonzero, while the corresponding data set for oriented matroids realized over $\mathbb R$ describes whether these elements of $\mathbb R$ are zero, positive, or negative.
 As a shorthand for this we shall say that the {\em structure set} for matroids is $\{0,\, \neq\! 0\}$, while the structure set for oriented matroids is $\{0, +, -\}$. Thus oriented matroids have cryptomorphic axiom systems:
\begin{itemize} 
\item {\em signed basis axioms}, better known as {\em chirotope axioms}, which in the case of a matroid arising from a matrix $M$ over $\mathbb R$ describe the signs of all nonzero maximal minors of $M$;
\item {\em signed vector axioms}, which in the case of a matroid arising from a matrix $M$ over $\mathbb R$ describe $\{\sign(x): x\in\ker(M)\}$;
\item {\em signed circuit axioms}, which in the case of a matroid arising from a matrix $M$ over $\mathbb R$ describe the elements of $\{\sign(x): x\in\ker(M)\}$ of minimal nonempty support (where $\sign(x_1, \ldots, x_n)=(\sign(x_1), \ldots, \sign(x_n))$).
\end{itemize}

Further, oriented matroids have a notion of duality that is compatible with duality of ordinary matroids and reflects orthogonality of subspaces of $\mathbb R^n$. If $\Mom$ is an oriented matroid with set of signed vectors $\Vom$, then the set $\Vom^*$ of signed vectors of the dual $\Mom^*$ is called the set of {\em signed covectors} of $\Mom$, and the set $\Com^*$ of signed circuits of $\Mom^*$ is called the set of {\em signed cocircuits} of $\Mom^*$

Perhaps the most wonderful property of oriented matroids is the Topological Representation Theorem (\cite{FL}). This theorem says that the nonzero covectors of a rank $d$ oriented matroid correspond to the cells in a very intuitive cell decomposition of $S^{d-1}$. In fact, yet another crytomorphic defintion of oriented matroids can be given in terms of these cell decompositions.

Now consider the case $\Kf=\mathbb C$: what is the right notion of ``complex matroid''? In contrast to oriented matroids, the development here has been limited. Ideally, one would hope for cryptomorphic axiom systems similar to those for oriented matroids, resulting in a Topological Representation Theorem.

Ziegler~\cite{Zie} defined a notion of {complex matroid} with extra structure given by the structure set 
$\{0,+,-,i,-i\}$.  That is, where the set of covectors of a matroid realized by a matrix $M$  over a field $\Kf$ says whether various elements of $\Kf$ are zero or nonzero, and the corresponding data set for oriented matroids realized over $\mathbb R$ describes whether these elements of $\mathbb R$ are zero, positive, or negative,  the corresponding data set for Ziegler's complex  matroids realized over $\mathbb C$ describes whether these elements of $\mathbb C$ are zero, positive real, negative real, have positive imaginary part, or have negative imaginary part. Ziegler's complex matroids have a Topological Representation Theorem \cite[Theorem 3.5]{Zie}. However, they are only known to have one axiomatization, in terms of covectors \cite[Definition 1.3 and 4.1]{Zie}. Ziegler's theory is completely discrete, which can be seen as either a strength or a weakness -- his complex matroids lack any symmetry analogous to the action of $\mathbb C^*$ on complex linear objects.

 Below, Krummeck, and Richter-Gebert~\cite{BKR} developed another notion of complex matroid, with structure set  $S^1\cup\{0\}$, where $S^1$ is the set of unit elements in $\mathbb C$, and with axiomatization only in terms of bases with structure, or {\em phirotopes}. That is, where the set of bases of a matroid realized by a matrix $M$  over a field $\Kf$ says whether various maximal minors of $M$ are zero or nonzero,  the corresponding data set for phirotopes realized by a matrix over $\mathbb C$ additionally describes 
the phase $\theta$ of each nonzero maximal minor $re^{i\theta}$.  Below, Krummeck, and Richter-Gebert gave an axiomatization for phirotopes and proved various interesting properties in rank 2, in particular about realizability. Delucchi  \cite{Del}
 developed a notion of orthogonality for this context, leading to dual phirotopes, and defined circuits and cocircuits associated to a phirotope (although he did not find circuit axioms).

Taking the point of view of the theory of {\em matroids with coefficients} developed by Dress and Wenzel \cite{DW}, phirotopes correspond to basis orientations over the fuzzy ring $\mathbb C // \mathbb R^+$, of which $S^1\cup\{0\}$ is a subset. Within this framework, Dress and Wenzel show phirotopes to be cryptomorphic to what can be roughly taken to be an axiomatization for ``signed flats'' (with coefficients in the full fuzzy ring), and one can prove that dual pairs of matroids with coefficients have ``orthogonal'' signatures. However, Dress and Wenzel's work gives no cryptomorphic axiomatization of matroids with coefficients in terms of dual pairs, nor in terms of circuits.

In the present paper we ask (and, to some extent, answer) how much of the foundations of oriented matroids can be paralleled with the structure set $S^1\cup\{0\}$. We give two different axiomatizations for circuits and cocircuits of a complex matroid and show them to be cryptomorphic to the phirotope axioms. We then give two examples that draw distinctions between oriented matroids and complex matroids: first, that the circuit axioms for complex matroids must have a more restricted form than those for oriented matroids, and that the there is no ``good'' set of vector or covector axioms. 
Finally, we briefly discuss weak maps of complex matroids.

\subsection*{Acknowledgements} We thank Tom Zaslavsky, with whom we discussed early versions of the work, and the referee, whose comments greatly improved the presentation. The second author would like to thank Eva-Maria Feichtner for advising him during his diploma thesis, in which some of the topics of this work were addressed. The first author would like to thank Eva-Maria Feichtner for introducing her to the second author.

\section{Complex matroids}\label{sect:COM}

This section outlines our main results and should serve the reader as a road map through the remainder of the paper. We start by defining complex phases and putting some notation in place. Then, we present our cryptomorphic axiomatizations for complex matroids. We close by sketching the discussion about covectors, complexification and weak maps that will take place in the last sections of the paper.

\subsection{Complex phases}

\begin{defn}[Phase vectors] Given a finite ground set $E$, a {\em phase vector} (or {\em ``phased set''}) is any  
$$X\in (S^1\cup\{0\})^E$$
where $S^1=\{z\in\mathbb{C}\mid \vert z\vert = 1\}$ is the unit circle in the Gauss plane of the complex numbers. 
We will denote by $X(e)$ the $e$-th component of $X$.
We define a partial order on phases by setting $0< \mu$ for all $\mu\in S^1$ and declaring any two elements of $S^1$ to be incomparable. This extends to a partial order on phase vectors, defined componentwise. The minimal phase vector with respect to this ordering is the zero vector, which has value $0$ on every component and will be denoted by $\zero$.

 The {\em phase} $\ph(x)$ of $x\in{\mathbb C}$ is defined to be 0 if $x=0$ and $\frac{x}{|x|}$ otherwise. For $v\in{\mathbb C}^E$, $\ph(v)$ is defined to be the vector with components $(\ph(v))_e=\ph(v_e)$. 
\end{defn}

\begin{defn} Define the {\em phase convex hull} $\pconv(S)$ of a finite $S\subset S^1\cup\{0\}$ to be the set of all phases of (real) positive linear combinations of $S$. Thus
\begin{itemize}
\item $\pconv(\emptyset)=\emptyset$,
\item $\pconv(\{\mu\})=\{\mu\}$ for all $\mu$,
\item $\pconv(\{\mu, -\mu\})=\{0, \mu,-\mu\}$ for all $\mu$,
\item if $S=\{e^{i\alpha_1}, \ldots, e^{i\alpha_k}\}$ with $k\geq 2$ and $\alpha_1<\cdots<\alpha_k<\alpha_1+\pi$, then $$\pconv(S)=\pconv(S\cup\{0\})=\{e^{i\gamma}\mid \alpha_1<\gamma<\alpha_k\}$$
\item if $S=\{e^{i\alpha_1}, \ldots, e^{i\alpha_k}\}$ with $k\geq 3$ and $\alpha_1<\cdots<\alpha_k=\alpha_1+\pi$, then $$\pconv(S)=\pconv(S\cup\{0\})=\{e^{i\gamma}\mid\alpha_1<\gamma<\alpha_k\},$$
\item otherwise (i.e., if the nonzero elements of $S$ do not lie in a closed half-circle of $S^1$) $\pconv(S)= S^1\cup\{0\}$.
\end{itemize}
\end{defn}

\subsection{Axioms for complex matroids}

\begin{defn}[Phirotope axioms, compare \cite{BKR}]\label{def:COM-phir}$\,$

A function $\phr:E^d\rightarrow S^1\cup\{0\}$  is called a {\em rank $d$ phirotope} if
\begin{itemize}
\item [($\phr\,$1)] $\phr$ is nonzero
\item[($\phr\,$2)] $\phr$ is alternating
\item[($\phr\,$3)] For any two subsets $x_1,\ldots,x_{d+1}$ and $y_1,\ldots , y_{d-1}$ of $E$, $$0\in\pconv(\{(-1)^k\phr(x_1,x_2,\ldots,\hat{x_k} ,\ldots, x_{d+1})\phr(x_k,y_1,\ldots,y_{d-1})\}).$$
\end{itemize}
\end{defn}

\begin{defn}[Phased circuit axioms]\label{def:COM-cir}\label{def:COM:circ}$\,$
A set $ \Co\subseteq (S^1\cup\{0\})^E$ is the {\em set of phased circuits of a complex matroid} if and only if it satisfies
\begin{enumerate}
\item[($\Co 0$)] for all $X\in \Co$ and all $\alpha\in S^1$, $\alpha X\in \Co$ {\em (Symmetry)}
\item[($\Co 1$)] for all $X,Y\in \Co$ with $\supp(X)=\supp(Y)$, $X=\alpha Y$ for some $\alpha\in S^1$ {\em (Incomparability)}
%\item[(SE)]  for all $X,Y\in  \Co$, $e,f\in E$ with $X(e)=-Y(e)\neq 0 $ and $Y(f)\neq X(f)$, there is $Z\in  \Co$ with $f\in \supp(Z)\subseteq \supp(X)\cup\supp(Y)\setminus e$.
\item[(ME)] for all $X,Y\in \Co$  such that  $\supp(X)$, $\supp(Y)$ is a modular pair in $\{\supp(X)\mid X\in \Co\}$ and all $e,f\in E$ with $X(e)=-Y(e)\neq 0$ and $X(f)\neq -Y(f)$, there is $Z\in \Co$ with 
\begin{itemize}
\item [$\bullet$]  $f\in \supp(Z)\subseteq (\supp(X)\cup\supp(Y))\setminus e $, and 
\item [$\bullet$]  \(
\left\{\begin{array}{ll}
Z(g)\in\pconv(\{ X(g),Y(g)\})  &\textrm{if } g\in\supp(X)\cap\supp(Y)\\
Z(g)\leq\max\{X(g),Y(g)\} & \textrm{else}\\ 
\end{array}\right.\)\\ 
\end{itemize}{\em (Modular Elimination).}

\end{enumerate}

\end{defn}

\begin{remark}\label{obs:COM} Some points about matroids:
\begin{itemize}
\item[(1)] The phirotope axioms imply that the support of $\phr$ is the set of bases of a matroid $\Mm_\phr$. 
\item[(2)] See Definition \ref{modgen} for a definition of ``modular pair''.  By Lemma \ref{lemma:modelim}, property ($\Co 0$), ($\Co 1$) and (ME) together  
show that the set $\{\supp(X) \mid X\in  \Co\}$ is the set of circuits of a matroid $\Mm_\Co$. 
\end{itemize}
\end{remark}

\begin{remark} Some points about realizability:
\begin{itemize}
\item[(1)] It is easily seen that if $M$ is a rank $d$ matrix over
  $\mathbb C$ with columns indexed by $E$ then the function $E^d \to
  S^1\cup\{0\}$ taking each $d$-tuple to the phase of the determinant
  of the corresponding submatrix of $M$ is a phirotope. In this case
  Property ($\phr\,$3) follows from the Grassmann-Pl\"ucker relations.\\
  Similarly, the set $\Co$ of all phase vectors $\ph(v)$, where $v$ runs over
  all elements of $\ker(M)$ of minimal nonzero support, is the set of phased
  circuits of a complex matroid. 
  We call $M$ a {\em realization} of $\phr$ resp. $\Co$.

\item[(2)] Theorem \ref{thm:elim} will give a correspondence between phirotopes and sets of phased circuits of complex
  matroids. A corresponding $\phr$ and $\Co$ will be called ``the
  phirotope  resp.\ the set of phased circuits of a {\em complex matroid}.'' A phirotope $\phr$ has realization $M$ if and only if its corresponding $\Co$ has realization $M$. In this case we will call $M$ a realization of the complex matroid. A complex matroid that admits a realization is
  called {\em realizable}.

%A phirotope and a set of phased circuits of a complex
%  matroid admitting a common realization $M$ correspond through the
%  bijection of Theorem \ref{thm:elim}. Thus we can say that $M$ is a
%  realization of the complex matroid defined by the given phirotope
%  resp. set of phased circuits.  A complex matroid that admits a realization is
%  called {\em realizable}.
  
  \item[(3)] We say that a subspace $W$ of $\mathbb C^E$ is a realization of a given complex matroid if $W=\ker(M)$ for some matrix realization $M$ of the complex matroid.
\end{itemize}
\end{remark}

\begin{remark}\label{obs:modular} Some points about modularity:
\begin{itemize}
\item[(1)] The form of Elimination in our phased circuit axioms is perhaps the most surprising element here. As Example  \ref{ex:el:bad} will show,  the set of phased
  circuits of a complex matroid need {\em not} satisfy a general 
  Elimination Axiom analogous to that for oriented matroids (Axiom
  $\Com 2$ in Definition \ref{OMaxioms}), even in the realizable
  case. Based on this
  example, our feeling is that any general Elimination Axiom that
  is weak enough to hold for all complex matroids will not be strong
  enough to support a notion of duality (i.e., to prove Proposition
  \ref{circ_perp}).
  
\item[(2)]  Our Modular Elimination Axiom is reminiscent of a characterization of oriented matroids due to 
  Las Vergnas (Theorem 2.1 of \cite{LasV}).  However, Las Vergnas's "Modular Elimination" describes a criterion to check whether a signature of an {\em already given} set of circuits
  of a matroid satisfies an elimination condition, whereas we do not assume
  that an underlying matroid is given.
As discussed in Appendix \ref{appendix:OMmod} , in the oriented
  matroid context either our Modular Elimination %, Las Vergnas Modular Elimination,
  or general Elimination can be taken as an axiom. 
   
   For complex matroids, our choice of Modular Elimination Axiom, without the assumption of an underlying matroid,  allows sleeker proofs and presents complex
  matroids not as `built on top' of matroids but as `matroids on
  a different structure set'.

  \end{itemize}
 
\end{remark}

\begin{defn}\label{def:circorient} If $\Mm$ is a matroid and $\Co$ is the set of phased circuits of a complex matroid such that $\Mm_\Co=\Mm$, we say $\Co$ is a {\em complex circuit orientation} of $\Mm$.
\end{defn}

\begin{defn}\label{phr:completion} For a rank $d$ phirotope $\phr$ on
  the ground set  $E$, we
  say that a subset $\{e_1, \ldots, e_k\}\subseteq E$ is {\em $\phr$-independent} if
  it is an independent set of the matroid $M_\phr$.  
  We call a maximal $\phr$-independent set a  {\em $\phr$-basis}.
\end{defn}

\begin{defn} We say two phirotopes $\phr_1$, $\phr_2$ are {\em equivalent} if $\phr_1=\alpha\phr_2$ for some $\alpha\in S^1$.
\end{defn}

\begin{thmA}\label{thm:elim}
There is a bijection between the set of all equivalence classes of phirotopes on a set $E$ and the set of all sets of phased circuits of complex matroids on $E$, determined as follows. For a phirotope $\phr$ and the corresponding set $\Co$ of phased circuits,
\begin{itemize}
\item[(1)] The set of all supports of elements of $\Co$ is the set of minimal nonempty $\phr$-dependent sets, and
\item[(2)] The phases of $X\in\Co$ are determined by the rule 
\[
\frac{X(x_i)}{X(x_0)}=(-1)^{i}\frac{\phr(x_0,\ldots,\widehat{x_i},\ldots,x_d)}{\phr(x_1,\ldots,x_d)}
\] for all $i=0,\ldots,k$, 
where $x_0\in \supp(X)$ and $\{x_1,\ldots, x_d\}$ is any $\phr$-basis containing $\supp(X)\setminus x_0$.

\end{itemize}
\end{thmA}

%{\bf Proof of Theorem~\ref{thm: elim}:
  \begin{proof}
    Definition~\ref{defn:complexpivotrule} associates to each equivalence class of phirotopes
    $\phr$ a set of phased sets satisfying the two conditions listed
    in the theorem, and Section~\ref{phiro>circs} shows that this
    collection satisfies the circuit axioms.

    The converse follows in two steps:
    \begin{itemize}
    \item In Sections \ref{phiro>pairs} and \ref{pairs>phiro} we get a bijection between
      equivalence classes of phirotopes and {\em dual pairs of circuit
        signatures}, defined in Definition~\ref{def:COM:dualpair}.
    \item Section~\ref{circs>pairs} derives, for each set $\Co$ of
      phased circuits of a complex matroid, a set $\Do$ of phased sets
      so that $\Co, \Do$ is a dual pair of circuit signatures.
    \end{itemize}

The structure of the proof is summarized in the chart depicted in
Figure \ref{fig_chart}.
%\scalebox{0.8}
{
\begin{figure}[h]
\begin{center}
\scalebox{0.85}{\input{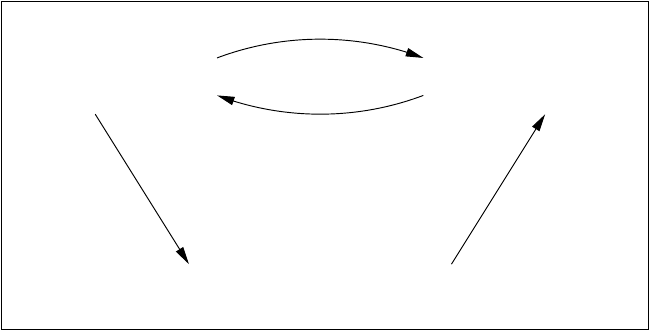_t}}
\end{center}
\caption{Structure of the Proof of Theorem \ref{thm:elim}.}\label{fig_chart}
\end{figure}
}

  \end{proof}

Thus we can refer to ``the complex matroid with phirotope $\phr$ and phased circuit set $\Co$''. 
%We will also refer to a matrix over $\mathbb C$ ``realizing a complex matroid'' (not just realizing a phirotope).

\begin{cor} With the notation introduced in Remark \ref{obs:COM}, if $\Mo$ is a complex matroid with phirotope $\phr$ and phased circuit set $\Co$, then $\Mm_\phr=\Mm_\Co$.
\end{cor}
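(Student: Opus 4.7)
The plan is to observe that this corollary is essentially an immediate book-keeping consequence of Theorem A together with the matroid cryptomorphism between bases and circuits, so the ``proof'' will just be a short chain of identifications rather than a new argument.

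First I would unpack the two sides. By the first bullet of Remark \ref{obs:COM}, the phirotope axioms guarantee that $\supp(\phr)$ is the basis set of a matroid $\Mm_\phr$; hence the circuits of $\Mm_\phr$ are exactly the minimal nonempty $\phr$-dependent subsets of $E$, in the sense of Definition \ref{phr:completion}. On the other side, by the second bullet of Remark \ref{obs:COM} (invoking Lemma \ref{lemma:modelim} applied to axioms $(\Co 0)$, $(\Co 1)$ and (ME)), the family $\{\supp(X)\mid X\in\Co\}$ is the circuit set of the matroid $\Mm_\Co$. So it suffices to prove the equality
\[
\{\supp(X)\mid X\in\Co\}=\{\text{minimal nonempty $\phr$-dependent subsets of }E\}.
\]

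But this equality is precisely the first bullet of the statement of Theorem \ref{thm:elim}: the bijection described there sends the phirotope $\phr$ to the set $\Co$ of phased circuits whose supports are exactly the minimal nonempty $\phr$-dependent sets. So once Theorem A is available, both matroids have the same circuit family and are therefore identical as matroids.

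There is no real obstacle here; the only thing to be careful about is vocabulary consistency, namely that ``$\phr$-dependent'' as used in Theorem A agrees (via Definition \ref{phr:completion}) with ``dependent in $\Mm_\phr$'', which is what makes the minimal nonempty $\phr$-dependent sets equal to the circuits of $\Mm_\phr$. With that clarification spelled out, the corollary follows in one line from Theorem A plus the matroid cryptomorphism between bases and circuits recalled in Section \ref{matroidaxioms}.
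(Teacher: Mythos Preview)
Your proposal is correct and matches the paper's approach: the corollary is stated in the paper with no separate proof, precisely because it is an immediate consequence of the first bullet of Theorem~\ref{thm:elim} together with the bases/circuits cryptomorphism you invoke. Your care in tracing the identification of ``minimal nonempty $\phr$-dependent sets'' with the circuits of $\Mm_\phr$ via Definition~\ref{phr:completion} is exactly the one bookkeeping point that justifies the one-line deduction.
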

We call this matroid the {\em underlying matroid} of $\Mo$. The rank of $\Mo$ is the rank of its underlying matroid.

 Consider two vectors $v,w\in\mathbb{C}^E$. By definition, they are orthogonal if their (Hermitian) scalar product equals zero: $\langle v,w\rangle=\sum v_e\overline{w_e}=0$. Now, $\ph(v_e\overline{w_e})=\ph(v_e)\ph(w_e)^{-1}$, and if complex numbers with such phases must add up to zero, then the point $0$ in the complex plane must be contained in \begin{center}$\pconv(\{\ph(v_e)\ph(w_e)^{-1}\mid e\in E\})$. 
\end{center}This suggests the following definition.

\begin {defn}[Orthogonality]\label{def:orth:COM}
Let $S,T\in (S^1\cup\{0\})^E$ be two phased sets for some finite set $E$. Let
\[
P_{S,T}=\bigg\{\frac{S(e)}{T(e)}\,\bigg\vert\, e\in\supp(S)\cap\supp(T)\bigg\}.
\]
We say $S$ and $T$ 
are {\em orthogonal}, written $S\perp T$, if 
\[0\in\pconv (P_{S,T}).\]

Two sets $\mathcal{S},\mathcal{T}\subseteq (S^1\cup\{0\})^E$ are called orthogonal, written $\mathcal{S}\perp \mathcal{T}$, if $S\perp T$ for all $S\in\mathcal{S}$ and all $T\in\mathcal{T}$. The set of all phased sets orthogonal to $\mathcal{S}$ is denoted $\mathcal{S}^\perp$.
\end {defn}

\begin{figure}[h]
  \centering
  \scalebox{0.85}{\input{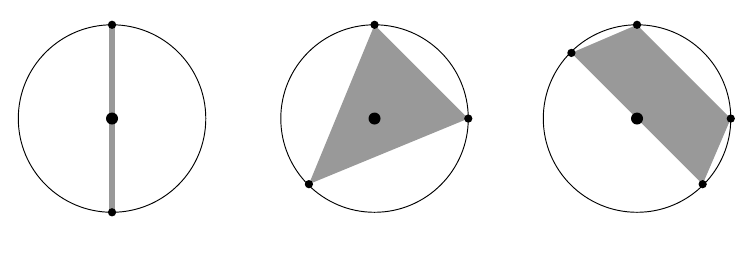_t}}
  \caption{The orthogonality relations between the phased vectors
    $X=(i,1,-1,0)$, $Y=(1,i,i,1)$, $Z=(1,1,e^{-i\pi/4}, e^{i\pi/4})$,
    considered as phased sets with $E=\{1,2,3,4\}$. In the
    picture the index $i$ denotes the position of $X_i/Y_i$
    (respectively, $X_i/Y_i$ and $Y_i/Z_i$).}
\end{figure}

The notion of orthogonality introduced above behaves naturally with respect to duality.

\begin{thmA}\label{dualCOM} If $\Mo$ is a complex matroid with ordered ground set $E$, phirotope $\phr: E^d\to S^1\cup\{0\}$, and circuit set $\Co$, then there is a complex matroid $\Mo^*$ with ground set $E$ and
\begin{itemize}
\item[(1)] phirotope $\phr^*: E^{|E|-d}\to S^1\cup\{0\}$ given as in
  Definition \ref{def:phr_dual},
\item[(2)] circuit set $\Co^*=\min(\Co^\perp\setminus \{\underline{0}\})$,\\ where $\min$ denotes support inclusion minimality.
\end{itemize}

The underlying matroid of $\Mo^*$ is the dual of the underlying matroid of $\Mo$. If $\Mo$ is realized by a vector space $W\subset \mathbb C^E$ then $\Mo^*$ is realized by $W^\perp$.
\end{thmA}

%{\bf Proof of Theorem B:

\begin{proof}
  Lemma~\ref{lem:phirdual} proves that the function $\phr^*$ given in
  Definition \ref{def:phr_dual} is a phirotope with the correct
  underlying matroid. Definition~\ref{defn:complexpivotrule}
  associates to $\phr$ resp. $\phr^*$ sets of phased sets $\Co_\phr$,
  $\Do_{\phr}$, and Proposition~\ref{pairsfromphirs} shows that
  $\Co_\phr$, $\Do_{\phr}$ is a dual pair of complex circuit
  signatures. As proved in Theorem A, $\Co_\phr$ is just $\Co$ and
  $\Do_\phr$ is the circuit set of the complex matroid associated to
  $\phr^*$.  Proposition~\ref{prop:circ:min} shows that
  $\Do_{\phr}=\min(\Co^\perp\setminus
  \{\underline{0}\})$.
  
  If $\Mo$ is realized by a vector space $W$, then $\Co$ is the set of elements of $\{\ph(w)\mid w\in W\backslash\{\vec{0}\}\}$ of minimal support. Certainly the set $\Do$ of minimal elements of $\{\ph(w)\mid w\in W^\perp\backslash\{\vec{0}\}\}$ is contained in $\Co^\perp$. Further, $\Do$ is a complex circuit signature of the dual of the underlying matroid of $\Mo$. Thus by Propositions~\ref{circ_perp} and~\ref{prop:circ:min}, $\Do=\Co^*$.

\end{proof}

\begin{defn} We call the set $\Co^*$ of Theorem~\ref{dualCOM} the {\em set of phased cocircuits} of $\Mo$.
\end{defn}
\comment{check that this is used consistently throughout.}

\begin{remark} The reader will perhaps notice a ``missing item'' in the statement of Theorem \ref{dualCOM} as compared to its counterpart for oriented matroids, Theorem \ref{dualOM}. We will show in Section \ref{Sect:vectors} that there can be no axiomatic description of the phases of the row space of a matrix with complex coefficients (i.e., a "vector axiomatization") that is cryptomorphic to the other axiomatizations.
\end{remark}

The following gives a complex matroid version of a lesser-known characterization of oriented matroids (\cite{BLV78}). 

\begin{defn}\label{def:COM:dualpair} Let $M$ be a matroid with ground set $E$. We say $\Co\subset (S^1\cup\{0\})^E$ is a {\em complex circuit signature} of $M$ if
\begin{enumerate}
\item[(S1)] for all $X\in \Co$ and all $\alpha\in S^1$, $\alpha X\in \Co$, 
\item[(S2)] for all $X,Y\in \Co$ with $\supp(X)=\supp(Y)$, $X=\alpha Y$ for some $\alpha\in S^1$, and
\item[(S3)]  the set $\{\supp(X) \mid X\in  \Co\}$ is the set of circuits of $M$ 
\end{enumerate}
We say $\Do\subset (S^1\cup\{0\})^E$ is a {\em complex cocircuit signature} of $M$ if $\Do$ is a complex circuit signature of $M^*$.

We say $\Co, \Do$ are a {\em dual pair of complex circuit signatures}
of $M$ if  $\Co$ is a complex circuit signature of $M$, $\Do$  is a
complex cocircuit signature of $M$, and 
\begin{enumerate}
  \item[(S4)]$\Co\perp\Do$.
\end{enumerate}

\end{defn}

\begin{thmA}\label{thm:dualpair:COM}
Let $\Co$  be a complex circuit signature and $\Do$ be a complex cocircuit signature of a matroid $M$. Then $\Co$ and $\Do$ are the set of phased circuits and cocircuits of a complex matroid with underlying matroid $M$ if and only if  \[\Co\perp \Do.\]
\end{thmA}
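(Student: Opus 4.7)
The plan is to handle the two implications separately.

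For the forward direction, assume $\Co$ and $\Do$ are the phased circuits and cocircuits of a complex matroid $\Mo$ whose underlying matroid is $M$. Theorem~\ref{dualCOM} identifies the phased cocircuit set of $\Mo$ with $\min(\Co^\perp\setminus\{\zero\})$, so in particular every element of $\Do$ lies in $\Co^\perp$, and $\Co\perp\Do$ follows at once.

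For the reverse direction, assume $\Co\perp\Do$. My strategy is to produce a phirotope $\phr$ on $E$ whose associated phased circuits (via Theorem~\ref{thm:elim}) are exactly $\Co$, and then to use Theorem~\ref{dualCOM} to identify the phased cocircuits of the resulting complex matroid with $\Do$. To build $\phr$, I would fix a basis $B_0$ of $M$, set $\phr(B_0)=1$, and extend by alternativity. For a neighbor $B_1=(B_0\setminus b)\cup e$ of $B_0$ in the basis graph (Definition~\ref{def:basisgraph}), the basic circuit $C(B_0,e)$ supports a unique $S^1$-class of elements of $\Co$, and the phase formula of Theorem~\ref{thm:elim} dictates the value of $\phr(B_1)$ relative to $\phr(B_0)$. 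Propagating this local rule along edges of the basis graph defines $\phr(B)$ for every basis $B$, provided the assignment is invariant under every closed walk. By Theorem~\ref{theo:maurer} it suffices to check invariance under elementary moves, which involve only rank~$2$ minors of $M$; on each such minor, the orthogonality hypothesis $\Co\perp\Do$ supplies enough linear relations among the cocircuits in $\Do$ to force the phases of the circuits in $\Co$ to assemble consistently. Granting this well-definedness, the Grassmann--Pl\"ucker relations $(\phr 3)$ reduce to the same rank~$2$ analysis.

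Once the phirotope $\phr$ is in hand, Theorem~\ref{thm:elim} produces a complex matroid $\Mo$ whose phased circuit set is $\Co$, and Theorem~\ref{dualCOM} identifies its phased cocircuit set as $\Co^*=\min(\Co^\perp\setminus\{\zero\})$. To close the argument I would show $\Do=\Co^*$: any $Y\in\Do$ lies in $\Co^\perp$ with support a cocircuit of $M$, hence inclusion-minimal among supports of nonzero elements of $\Co^\perp$, so $Y\in\Co^*$. Since both $\Do$ and $\Co^*$ contain exactly one $S^1$-class per cocircuit of $M$ as support, by $(S1^*)$ and $(S2^*)$, these classes must coincide, giving $\Do=\Co^*$.

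The main obstacle will be the well-definedness of $\phr$ on the basis graph: this reduces to a phase-coherence statement on rank~$2$ minors, a complex-matroid analog of the modular-pair structure of Definition~\ref{modgen} and Lemma~\ref{lemma:modelim}, which must be extracted from $\Co\perp\Do$. Everything else is relatively routine matroid bookkeeping combined with applications of Theorems~\ref{thm:elim} and~\ref{dualCOM}.
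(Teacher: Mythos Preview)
Your overall plan coincides with the paper's: the reverse implication is Proposition~\ref{perp-phir}, which builds $\phr$ on the basis graph exactly as you describe, invokes Maurer's Theorem~\ref{theo:maurer} for well-definedness, and handles the elementary moves via Lemma~\ref{k34}. The forward implication is Proposition~\ref{pairsfromphirs}~(S4).

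There is, however, a genuine gap in your verification of $(\phr\,3)$. You write that the Grassmann--Pl\"ucker relations ``reduce to the same rank~$2$ analysis'' as the elementary moves. They do not. The elementary-move argument (Lemma~\ref{k34}) only ever uses orthogonality $X\perp Y$ in the special case $|\supp(X)\cap\supp(Y)|=2$, where the condition is a single phase equation. But a general instance of $(\phr\,3)$, applied to tuples $x_0,\ldots,x_d$ and $y_2,\ldots,y_d$, unwinds (as in part~3 of the proof of Proposition~\ref{perp-phir}) to the statement $0\in\pconv(P_{X,Y})$ for the basic circuit $X$ on $\{x_0,\ldots,x_d\}$ and the cocircuit $Y$ complementary to $\cl\{y_2,\ldots,y_d\}$, and $|\supp(X)\cap\supp(Y)|$ can be arbitrarily large. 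So $(\phr\,3)$ really requires the \emph{full} hypothesis $\Co\perp\Do$, not merely its two-point instances. Without this step you do not yet know $\phr$ is a phirotope, and hence cannot invoke Theorem~\ref{thm:elim} or Theorem~\ref{dualCOM} downstream.

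A secondary remark: your appeals to Theorem~\ref{dualCOM} for the forward direction and for the identification $\Do=\Do_\phr$ are not circular, but they are detours. The paper establishes $\Co\perp\Do$ directly from the Grassmann--Pl\"ucker relations (Proposition~\ref{pairsfromphirs}), and in Proposition~\ref{perp-phir} reads off $\Do=\Do_\phr$ straight from the construction (equation~(\ref{fi_suY})), using only that each cocircuit meets a suitable basic circuit in exactly two points. This avoids any dependence on the circuit-elimination machinery underlying Theorem~\ref{dualCOM}(2).
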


\begin{proof}
  If $\Co\perp \Do$ then $\Co,\Do$ is a dual pair, and the construction of a phirotope with corresponding circuit set $\Co$ from this pair is
  carried out in Section \ref{pairs>phiro}. Conversely, given the set
  $\Co$ of phased circuits and the set  $\Do$ of phased cocircuits of
  a complex matroid with underlying matroid $M$, we know by Theorem~\ref{thm:elim} that there is a phirotope
  $\phr$ with $\Co=\Co_\phr$ and $\Do=\Co_{\phr^*}$. Proposition
  \ref{pairsfromphirs} proves that these are a dual pair of complex
  circuit signatures of $M$.
\end{proof}

\subsection{Minors of complex matroids}

\begin{defn}\label{def:minor:phsets}
For $X\in (S^1\cup\{0\})^E$  and $A\subseteq E$ let $X_{\setminus A}\in (S^1\cup\{0\})^{E\setminus A}$ be the restriction of $X$ to $E\setminus A$. For $\Uo\subseteq (S^1\cup\{0\})^E$ define
\begin{itemize}
\item[(1)] the {\em deletion} of $A$ from $\Uo$ as 
\[
\Uo\setminus A = \{ X_{\setminus A} \mid X\in \Uo, \, \supp(X)\cap A = \emptyset \}.
\]
\item[(2)] the {\em contraction} of $A$ in $\Uo$ as 
\[ 
\Uo /A := \min\{ X_{\setminus A} \mid X\in \Uo \},
\]
where $\min$ denotes support minimality.
\end{itemize}
\end{defn}
%
%\begin{defn} A {\emph basis} for a complex matroid is a basis of the underlying matroid.
%with phirotope $\phr$ is a set $\{b_1, \ldots, b_d

\begin{thmA}\label{thm:minors}
Let $\Co$ be the set of phased circuits of a complex matroid $\Mo$ on the ground set $E$ with underlying matroid $\Mm$. If $A\subseteq E$, then $\Co\setminus A$ is the set of phased circuits of a complex matroid $\Mo\setminus A$ with underlying matroid $\Mm\setminus A$, and $\Co/A$ is the set of phased circuits of a complex matroid $\Mo/A$ with underlying matroid $\Mm/A$. 

Further,
%\begin{itemize}
%\item[(1)] 
with the notation of Definition~\ref{def:minor:phsets} and Theorem~\ref{dualCOM}, 
\[\Co^*/A=(\Co\setminus A)^*.\]

%\item[(2)] Let $\phr$ be a phirotope of $\Mo$. Let $\{a_1, \ldots, a_c\}$ be a basis for $\Mm\setminus A$ and let $\{a_1, \ldots, a_c, b_1, \ldots, b_{d-c}\}$ be a basis for $\Mm$. (The set $\{b_1, \ldots, b_{d-c}\}$ will in some cases be empty.)
%Then the function
%$$\begin{array}{rcl}
%\phr{\setminus A}: E^c&\to& S^1\cup\{0\}\\
%\phr{\setminus A}(x_1,\ldots, x_c)&=&\phr(x_1,\ldots, x_c, b_1, \ldots, b_{d-c})
%\end{array}$$
%is a phirotope for $\Mo\setminus A$, and the function
%$$\begin{array}{rcl}
%\phr{/ A}: E^{d-c}&\to& S^1\cup\{0\}\\
%\phr{/ A}(x_1,\ldots, x_{d-c})&=&\phr(x_1,\ldots, x_{d-c}, a_1, \ldots, a_{c})
%\end{array}$$
%is a phirotope for $\Mo/ A$.
%After replacing real signs with complex phases, Definition
%  [...] gives the phirotopes associated to $\Co\setminus A$
%  and $\Co/A$ in terms of the phirotope associated to $\Co$.
%\comment{Really? do we want this? Which definition is meant here?}
%\end{itemize}
 \end{thmA}

 \begin{remark}
   The phirotopes $\phr\setminus A$ and $\phr/A$ of $\Mo\setminus A$
   resp. $\Mo/A$ are given in Lemmas \ref{lem:phir_contr} and
   \ref{lem:phir_del}.
 \end{remark}
 \begin{proof}
   Lemma \ref{lem:phir_contr} and \ref{lem:phir_del} prove that, given
   a phirotope $\phr$ with underlying matroid $M$, the
   functions $\phr\setminus A$ and $\phr/ A$ are indeed phirotopes with
   underlying matroids $M\setminus A$ resp. $M/ A$. 
   % Proposition \ref{minor:CM} proves that $\Co/A$ and $\Co\setminus A$
%    are sets of phased circuits of two complex matroids with the
%    correct underlying matroid. 
   Proposition \ref{pairsfromphirs} proves
   that
   $\Co/A=\Co_{\phr/A}$ and $\Co\setminus A = \Co_{\phr\setminus
     A}$. The last part of Lemma \ref{lem:phir_del}, together with Theorem~\ref{thm:elim},  then proves the duality result.
   
 \end{proof}

The complex matroids 
%associated to $\Co\setminus A$ and $\Co/A$ are denoted 
$\Mo\setminus A$ and $\Mo/A$ are called respectively the {\em deletion} of $A$ from $\Mo$ and the {\em contraction} of $A$ in $\Mo$. 

\section{Phirotopes, duality and minors}

This section deals with phirotopes as defined in Definition~\ref{def:COM-phir}. Its goal is to establish some basic facts about duality and minors in terms of phirotopes. 

\subsection{Duality} \label{sec:phiro:dual}

Recall from Section~\ref{sect:COM} that given a phirotope $\phr$ on the ground set $E$, the set $\Bm_{\phr}:=\{\{b_1,\ldots,b_d\} \mid \phr(b_1,\ldots , b_d)\neq 0 \}$ is the set of bases of the underlying matroid $\Mm_\phr$.

\begin{defn}\label{def:phr_dual}
Given a  rank d phirotope $\phr$, choose a total ordering of $E$, and for all $(x_1,x_2,\ldots, x_{n-d})\in E^{n-d}$ let $(x'_1,\ldots x'_d)$ be a permutation of $E\setminus \{x_1,\ldots ,x_{n-d}\}$. Define the {\em dual} of $\phr$ as $$\phr^*(x_1,\ldots , x_{n-d}):=\phr(x'_1,\ldots,x'_d)^{-1}\sgn(x_1,\ldots,x_{n-d},x'_1,\ldots,x'_d).$$
\end{defn}

Notice that, up to a global change of sign, $\phr^*$ is independent of the choice of orderings on $E$ and $\{x'_1,\ldots x'_d\}$.

\begin{lemma}\label{lem:phirdual}   $\phr^*$ is a  rank $(n-d)$ phirotope, and the underlying matroid $M_{\phr^*}$ is the dual $(M_\phr)^*$ to $M_\phr$.
\end{lemma}
\begin{proof} By definition, $\Bm_{\phr^*}=\{E\setminus B \mid B\in \Bm_{\phr}\}$ which, by Theorem~\ref{dual:mat}, is the set of bases of $(\Mm_\phr)^*$.  Thus, to prove the lemma it suffices to prove that $\phr^*$ is indeed a phirotope.

Axioms ($\phr\,$1) and ($\phr\,$2) are clear from the definition. For ($\phr\,$3), consider two sets $X:=\{x_0,\ldots ,x_{n-d}\}$ and $Y:=\{y_1,\ldots ,y_{n-d-1}\}$, numbered such that $X\cap Y=\{x_{n-d-l},\ldots x_{n-d}\}=\{y_1,\ldots , y_l\}$. Without loss of generality we can assume that the total ordering of $E$ is given by
$$
x_0,\ldots ,x_{n-d},y_{l+1},\ldots,y_{n-d-1}, A,
$$
where $A$ is any total ordering of $E\setminus (X\cap Y)$.

Then we have
$$
\phr^*(x_0\ldots,\hat{x_k},\ldots,x_{n-d})\phr^*(x_k,y_1,\ldots,y_{n-d-1})=
$$
$$
\phr(x_k,y_{l+1},\ldots y_{n-d-1},A)^{-1}
\underbrace{\sgn(x_0\ldots,\hat{x_k},\ldots,x_{n-d},x_k,y_{l+1},\ldots y_{n-d-1},A)}_{\sigma_1}
$$

$$
\phr(x_0,\ldots,\hat{x_k},\ldots x_{n-d-l}, A)^{-1}
\underbrace{\sgn(x_k,y_1,\ldots,y_{n-d-1},x_0,\ldots,\hat{x_k},\ldots x_{n-d-l}, A  )}_{\sigma_2}
$$
where the sign
$$
\sigma_1\sigma_2=
$$
$$
(-1)^{n-d-k}\sgn(x_0,\ldots,x_{n-d},y_{l+1},\ldots,y_{n-d-1},A)
$$
$$
(-1)^{n-d+k}\sgn(y_1,\ldots , y_{n-d-1},x_0,\ldots,x_{n-d-l},A)
$$
$$
=\sgn(x_0,\ldots,x_{n-d},y_{l+1},\ldots,y_{n-d-1},A)\sgn(y_1,\ldots , y_{n-d-1},x_0,\ldots,x_{n-d-l},A)
$$
does not depend on $k$. 
Then, 
$$
\{(-1)^k\phr^*(x_0\ldots,\hat{x_k},\ldots,x_{n-d})\phr^*(x_k,y_1,\ldots,y_{n-d-1})\mid x_k\in X\setminus Y \}=
$$
$$
\sigma_1\sigma_2\{(-1)^k\phr(x_k,y_{l+1},\ldots y_{n-d-1},A)^{-1}\phr(x_0,\ldots,\hat{x_k},\ldots x_{n-d-l}, A)^{-1}\mid x_k\in X\setminus Y \}.
$$
We now have to prove that $0$ is in the relative interior of the convex hull of the latter set. Equivalently, we want to show that there are positive real numbers $\lambda_k$ such that
\begin{equation}\label{primasomma}
\sum_{k}\lambda_k(-1)^k\phr(x_k,y_{l+1},\ldots y_{n-d-1},A)^{-1}\phr(x_0,\ldots,\hat{x_k},\ldots x_{n-d-l}, A)^{-1} =0.
\end{equation}

Because $\phr$ is a phirotope, we know that there are positive real numbers $\lambda_k$ with
\begin{equation}\label{secondasomma}
\sum_{k}\lambda_k(-1)^k\phr(x_k,y_{l+1},\ldots y_{n-d-1},A)\phr(x_0,\ldots,\hat{x_k},\ldots x_{n-d-l}, A) =0.
\end{equation}
Since Equation (\ref{primasomma}) is the complex conjugate of Equation (\ref{secondasomma}), the claim follows.
\end{proof}

\subsection{Deletion and contraction} \label{sec:phiro:minors}

The following two lemmas will be part of the proof of Theorem~\ref{thm:minors}, as well as various inductive arguments throughout the paper.

\begin{lemma}\label{lem:phir_contr} Let $A\subset E$ be given, and choose a maximal $\phr$-independent subset $\{a_1,a_2,\ldots , a_l\}$ of $A$. Then $$(\phr / A) (x_1,\ldots , x_{d-l}):=\phr (x_1,\ldots x_{d-l}, a_1\ldots , a_l)$$
is a phirotope, and $M_{\phr/A}=M_{\phr}/A$. Up to global multiplication by a constant $c\in S^1$, $\phr/A$ is independent of the choice of $\{a_1,a_2,\ldots , a_l\}$.
\end{lemma}
\begin{proof} The phirotope axioms for $\phr/A$ are easy to check. That $\Mm_{\phr/A}=\Mm_\phr / A$ follows by Definition~\ref{def:mat:minor:bases}.(1) because 
\[
\Bm_{\phr/A}=\{\{x_1,\ldots,x_{d-l}\}\mid \phr(x_1,\ldots , x_{d-l},a_1,\ldots ,a_l)\neq 0\}\]
\[
=\{B\subseteq E \mid B\cup\{a_1,\ldots ,a_l\}\in\Bm_\phr\}.
\]
\end{proof}

\begin{lemma}\label{lem:phir_del} Let $A\subset E$ be given, and let $r$ be the rank of $E\setminus A$ in $M_\phr$. If $r<d$, choose  $\{ a_1, \ldots,a_{d-r}\}\subseteq A$ such that $(E\setminus A) \cup \{a_1,\ldots, a_{d-r}\}$ spans $ M_\phr$. Define a function $\phr\setminus A : E\setminus A\rightarrow S^1\cup \{0\}$ as follows:
$$ (\phr\setminus A) (x_1,\ldots , x_{r}):=
\left\{\begin{array}{ll}\phr(x_1,\ldots x_r) & \textrm{If }r=d \\
\phr(x_1,\ldots , x_r,a_1,\ldots, a_{d-r}), &\textrm{if }r<d.
\end{array}\right.
$$

Then, up to global multiplication by a nonzero constant, $\phr\setminus A$ is independent of the choice of $a_1,\ldots, a_{d-r}$ and  $(\phr\setminus A)^* = \phr^*  / A$ - in particular, it is a phirotope -- and  $M_{\phr \setminus A} = M_{\phr}\setminus A $.
\end{lemma}

\begin{proof} We prove the case where $A=\{a\}$, and we fix a linear ordering of $E$ where $a$ is the biggest element. 

If $r<d$, then $a$ is in every basis of $M_{\phr}$. Thus 
$$\phr^*(x_1,\ldots,x_t)\neq 0 \textrm{ only if }a\not\in\{x_1,\ldots,x_t\},$$
hence
\begin{align*}
(\phr^*/a)(x_{1},\ldots,x_{t})&=\phr^*(x_{1},\ldots,x_{t})\\
&=\phr(x_{t+1},\ldots , x_{n-1},a)^{-1}\sgn(x_1,\ldots ,x_{n-1},a)\\
&=(\phr\setminus a)(x_{t+1},\ldots , x_{n-1})^{-1}\sgn(x_1,\ldots ,x_{n-1})\\
&=(\phr\setminus a)^*(x_1,\ldots,x_t).
\end{align*}

\noindent If on the other hand $r=d$, then
\begin{align*}
(\phr^*/a)(x_{1},\ldots,x_{t})&=\phr^*(x_{1},\ldots,x_{t},a)\\
&=\phr(x_{t+1},\ldots , x_{n-1})^{-1}\sgn(x_1,\ldots,x_t,a,x_{t+1} ,\ldots, x_{n-1})\\
&=\phr(x_{t+1},\ldots , x_{n-1})^{-1}(-1)^{n-t-2}\sgn(x_1,\ldots,x_{n-1},a)\\
&=(\phr\setminus a)(x_{t+1},\ldots , x_{n-1})^{-1}(-1)^{n-t-2}\sgn(x_1,\ldots ,x_{n-1},a)\\
&=(-1)^{n-t-2}(\phr\setminus a)^*(x_1,\ldots,x_t).
\end{align*}	
\end{proof}

\section{Cryptomorphism between phirotopes and dual pairs}\label{sec:dualpairs}

\subsection{Dual pairs from phirotopes}\label{phiro>pairs}

The point of this section is to prove Proposition \ref{pairsfromphirs}, asserting that every phirotope $\phr$ induces a dual pair of complex circuit and cocircuit signatures on $\Mm_{\phr}$.

\begin{lemma} Let $\phr$ be a phirotope and $\Mm_\phr$ its underlying matroid. Let $C$ be a circuit of $\Mm_\phr$, $e,f\in C$, and $\{f, x_2,\ldots,x_d\}$ a basis for $\Mm_\phr$ containing $C\setminus e$. Then the number 
\[
\frac{\phr(e,x_2,\ldots,x_d)}{\phr(f,x_2,\ldots,x_d)}
\]
does not depend on the choice of $x_i$.
\end{lemma}
\begin{proof} Let $\{f, x_{2},\ldots,x_{d-1},x_d'\}$ be another basis for $\Mm_\phr$ containing $C\setminus e$.
 Then axiom ($\phr\,$3) for $\phr$ applied to $\{e,f,x_2,\ldots ,x_d\}$ and $\{x_2,\ldots,x_{d-1},x_d'\}$ reduces to 
$$\phr(f,x_2,\ldots ,x_d)\phr(e,x_2,\ldots ,x_{d-1},x_d')-\phr(e,x_2,\ldots ,x_d)\phr(f,x_2,\ldots ,x_{d-1},x_d')=0$$
and proves the claim for pairs of $\phr$-bases that differ by one element. The full claim follows by induction on the number of elements by which any two choices of basis differ. 
\end{proof}

\begin{defn}\label{defn:complexpivotrule}
Given a phirotope $\phr$, let $\Co_{\phr}$ be the family of all phased sets $X$ such that 
\begin{itemize}
\item $\supp(X)$ is a circuit of $\Mm_\phr$ and 
\item for all $e,f\in X$ and bases $B=\{f,x_2,\ldots,x_d\}$ with $\supp(X)\backslash e\subseteq B$
 we have 
$$
\frac{X(f)}{X(e)}=-\frac{\phr(e,x_2,\ldots,x_d)}{\phr(f,x_2,\ldots,x_d)}.
$$
\end{itemize}

Notice that for any $c\in S^1$ we have $\Co_{c\phr}=\Co_\phr$. Thus, it makes sense to talk about $\Co_{\phr^*}$, $\Co_{\phr \setminus e}$, and $\CC_{\phr /e}$. Let $\Do_{\phr}:=\Co_{\phr^*}$.
\end{defn}

\begin{prop}\label{pairsfromphirs}
For every phirotope $\phr$ the sets $\Co_\phr$ and $\Do_\phr$ satisfy Definition~\ref{def:COM:dualpair} and are thus a dual pair of complex circuit signatures of the matroid $\Mm_\phr$. Moreover, given an element $e$ of the ground set we have
\begin{itemize}
\item[(1)] $\CC_{\phr \setminus e}=\CC_{\phr}\setminus e$
\item[(2)] $\CC_{\phr /e}=\CC_{\phr}/e$
\end{itemize}
\end{prop}
\begin{proof}
All of the properties in the definition of phased circuits and cocircuits
are clear except (S4). 

To see (S4), let $X\in \Co$ and $Y\in \Do$. If $\supp(X)\cap\supp(Y)=\emptyset$, then $X\perp Y$ by definition. Otherwise, let $\supp{(X)}=\{x_1, \ldots, x_k\}$ and $\supp(Y)=\{y_1, \ldots, y_l\}$, with the elements of $\supp(X)\cap\supp(Y)$ written first. Thus, $x_i=y_i$ for all $i$ less than some value $m$.

We can extend $\supp(X)$ to $\{x_1, \ldots, x_{d+1}\}$ so that every $\{x_1, \ldots, \hat x_k,\ldots x_{d+1}\}$ with $x_k\in\supp(X)$ is a basis for $\Mm_\phr$. Similarly, we extend $\supp(Y)$ to $\{y_1, \ldots, y_{n-d+1}\}$ so that every $\{y_1, \ldots, \hat y_k,\ldots y_{n-d+1}\}$ with $y_k\in\supp(Y)$ is a basis for $\Mm_\phr^*$. Let $\{z_1,\ldots,z_{d-1}\}=E\backslash\{y_1, \ldots, y_{n-d+1}\}$.

The Grassmann-Pl\"ucker relations tell us that $0$ is in the phase convex hull of
$$
\{(-1)^k\phr(x_1, \ldots, \hat x_k, \ldots, x_{d+1})\phr(x_k, z_1,\ldots, z_{d-1})\mid k=1,\ldots d+1\}.
$$
Note that one of the factors of $\phr(x_1, \ldots, \hat x_k, \ldots, x_{d+1})\phr(x_k, z_1,\ldots, z_{d-1})$ will be 0 unless $x_k\in\supp(X)\cap\supp(Y)$. Applying the definition of $\phr^*$, we see that the above set can be written
$$\bigg\{\frac{(-1)^k\phr(x_1, \ldots, \hat x_k, \ldots, x_{d+1})\phr^*(y_1,\ldots,\hat y_k,\ldots,y_{n-d+1})^{-1}}{\sgn(x_k, z_1,\ldots, z_{d-1}, y_1,\ldots,\hat y_k,\ldots,y_{n-d+1})}\,\bigg\vert\begin{array}{l} x_k=y_k,\textrm{ both in}\\\supp(X)\cap\supp(Y)\end{array}\!\!\!\!\bigg\}.
$$

Now note that
$$\sgn(x_k, z_1,\ldots, z_{d-1}, y_1,\ldots,\hat y_k,\ldots,y_{n-d+1})$$
$$=(-1)^{d-1+k}\sgn(z_1,\ldots,z_{d-1},y_1,\ldots,y_{n-d+1})$$ and that
if $0$ is in the phase convex hull of a set $A$ of complex numbers then $0$ is in the phase convex hull of $cA$ for any complex number $c$.

So, multiplying all elements of our set by $$(-1)^{d-1}\sgn(z_1,\ldots,y_1,\ldots,y_{n-d+1})\phr(x_2, \ldots, x_{d+1})^{-1}\phr^*(y_2, \ldots, y_{n-d+1}),$$ we see that $0$ is in the phase convex hull of 
$$\bigg\{\frac{X(x_k)Y(x_k)}{X(x_1)Y(y_1)}\bigg\vert\, x_k\in\supp(X)\cap\supp(Y)\bigg\}.$$
Multiplying all elements of this set by $X(x_1)Y(y_1)$, we see that $X\perp Y$.

That  $\CC_{\phr \setminus e}=\CC_{\phr}\setminus e$ and  $\CC_{\phr /e}=\CC_{\phr}/e$ follows immediately from the definition of $\CC$. 
\end{proof}

\begin{cor}\label{DconFi} Given a phirotope $\phr$, consider $X\in\Co_\phr$ and $Y\in\Do_\phr$ such that $\supp(X)=\{x_0,\ldots , x_l\}$, $\supp(Y)=\{y_1,\ldots,y_h\}$. Choose elements $x_{l+1},\ldots ,x_d$ such that $\{x_1,\ldots,x_d\}\in\Bm_\phr$ and elements $z_2,\ldots,z_d$ that span the hyperplane $E\setminus \supp(Y)$ of $\Mm_\phr$. Then,

(1) for every $x_i, x_j\in\supp(X)$, $$
\frac{X(x_i)}{X(x_j)}=(-1)^{i-j}\frac{\phr(x_0,\ldots,\widehat{x_i},\ldots,x_d)}{\phr(x_0,\ldots,\widehat{x_j},\ldots,x_d)},$$

(2) for every $y_i, y_j\in\supp(Y)$, $$
\frac{Y(y_i)}{Y(y_j)}=\frac{\phr(y_j,z_2,\ldots,z_d)}{\phr(y_i,z_2,\ldots,z_d)}.
$$ 
In particular, $\Do_\phr$ can be defined alternatively as the family of all phased sets $Y\subset (S^1\cup\{0\})^E$ satisfying (2).
\end{cor}

\begin{proof}
The claim (1) follows because $\phr$ is alternating, and thus it is enough to keep track of the permutations involved. 

For claim (2), consider $Z\in\Co_\phr$ such that $\supp(Z)$ is the basic circuit of $y_i$ with respect to $\{y_j,z_2,\ldots,z_d\}$. Then, $\supp(Z)\cap\supp(Y)=\{y_i,y_j\}$, and thus since $Z\perp Y$ we must have
$$
\frac{Y(y_i)}{Y(y_j)}=-\frac{Z(y_i)}{Z(y_j)}=
\frac{\phr(y_j,z_2,\ldots,z_d)}{\phr(y_i,z_2,\ldots ,z_d)}.
$$
\end{proof}

\subsection{Phirotopes from dual pairs}\label{pairs>phiro}

This section will prove Proposition~\ref{perp-phir}, that a dual pair of complex circuit and cocircuit signatures induces a unique equivalence class of phirotopes.
 
Recall the notion of {\em basis
  graph} of a matroid (or see Definition~\ref{def:basisgraph} in the
Appendix) and that, if $B$ is a basis of a matroid $\Mm$ on
the ground set $E$ and $x\in E\setminus B$, then there is a unique
circuit $C(B,x)$ contained in $B\cup\{x\}$, called the
{basic circuit of $x$ with respect to $B$} (for this, see 
Lemma~\ref{basic_circ}).

To construct a phirotope from a dual pair $\Co$, $\Do$ of circuit orientations we
will follow the strategy of \cite[Proposition 3.5.2 (2.\
proof)]{BLSWZ},
 which proves a similar result for oriented matroids. 
 The gist of the proof is as follows.
 \begin{itemize}
 \item We arbitrarily choose one ordered basis $(b_1,\ldots, b_d)$ to have $\phr(b_0,\ldots, b_d)=1$. This defines the phirotope on any permutation of this basis.
 \item Given a definition of the phirotope on all permutations of a basis $B_1=\{e, x_2,\ldots,x_d\}$, consider an adjacent basis $B_2=\{f, x_2,\ldots,x_d\}$ in the basis graph.  Let $X\in\Co$ with $\supp(X)=C(B_1,f)$. Then the relation 
 $$
{\phr(f,x_2,\ldots,x_d)}=-\frac{X(e)}{X(f)}\phr(e,x_2,\ldots,x_d)
$$
(from Definition~\ref{defn:complexpivotrule}) determines $\phr(f, x_2,\ldots,x_d)$. 

\item Thus, for each edge $\{B_1, B_2\}$ in the edge graph, we associate the fraction $\frac{X(f)}{X(e)}$ to the direction from $B_1$ to $B_2$. To find the phirotope on permutations of some basis $B$, we find a path from $\{b_1, \ldots, b_d\}$ to $B$ and multiply the appropriate quotients along this path. 

The hard work of the proof is showing that the definition at $B$ is independent of the path chosen.
 \end{itemize}
 
We first need a preliminary lemma that investigates the values of the signatures of the circuits involved in the basis exchanges of ``triangles'' and ``squares'' of basis graphs.
\begin{lemma}\label{k34} Let $\Co, \Do$ be the set of phased circuits resp. cocircuits of a complex matroid with underlying matroid $\Mm$. 

\begin{itemize}
\item[(1)] Given three distinct elements $e,f,g\in E$ with bases $B_e,B_f,B_g$ of $\Mm$ and $A\subset E$ such that $B_e=A\cup e$, $B_f=A\cup f$, $B_g=A\cup g$, and for all $x,y\in\{e,f,g\}$ consider $X_{x,y}\in\Co$
%any complex signature\comment{complex SIGNATURE-PHASING?} of the basic circuit $C_{x,y}$ of $x$ with respect to $A\cup x$ 
with $\supp(X_{x,y})=C(A\cup x, y)$, 
$$
\frac{X_{e,f}(e)}{X_{e,f}(f)}\frac{X_{f,g}(f)}{X_{f,g}(g)}=-\frac{X_{e,g}(e)}{X_{e,g}(g)}.
$$

\item[(2)] Given three distinct elements $e,f,g\in E$ with bases 
$$
B_{e,f}=A\cup\{e,f\},\, B_{f,g}:=A\cup\{f,g\},\, B_{e,g}:=A\cup\{e,g\}
$$ 
of $\Mm$ for some $A\subset E$, choose any $X\in\CC$ with $\supp(X)=C(B_{e,f},g)$.
%be any orientation of the basic circuit $C$ of $g$ with respect to $B_{e,f}$. 
Then, 
$$
\frac{X(g)}{X(e)}\frac{X(e)}{X(f)} = \frac{X(g)}{X(f)}.
$$

\item[(3)] Consider an independent set $A\subset E$ and distinct elements $e,f,g,h\in E$ such that 
$$
B_1:=A\cup \{f,g\},\,B_2:=A\cup \{e,g\},\,B_1':=A\cup \{f,h\},\,B_2':=A\cup \{e,h\}
$$ 
\noindent are bases of $\Mm$, with 
$$
f\in C_1:=C(B_1,e),\, f\in C_1':=C(B_1',e),
$$
$$ 
g\in C_2:=C(B_1, h),\, g\in C_2':=C(B_2,h).
$$
Then for any $X_1, X_2,X_1',X_2'\in\Co$ with $\supp(X_1)=C_1$, $\supp(X_2)=C_2$, $supp(X_1')=C_1'$, $\supp(X_2')=C_2'$,
%Let $B_1$ be a basis of $\Mm$, and $e\not\in B_1$. Let $f$ be an element of the basic circuit of $e$ with respect to $B_1$ and $B_2$ be the basis obtained exchanging $e$ for $f$ in $B_1$. Suppose that, for some other element $h\in B_2\setminus e$, there is $g\in C(B_1,h)\cup C(B_2,h)$. Then, let $B_1':=(B_1\setminus g)\cup h$ and $B_2':=(B_2\setminus g)\cup h$. It follows that $e\in C(B_1',f)$ and
$$\frac{X_1(e)}{X_1(f)}\frac{X_2(h)}{X_2(g)}=\frac{X_1'(e)}{X_1'(f)}\frac{X_2'(h)}{X_2'(g)}.$$
%,\quad \frac{X_2(h)}{X_2(g)}=\frac{X_2'(h)}{X_2'(g)}$$ 
%where $X_1\in\Co$ is any signature of $C(B_1,h)$ and $X_1'\in\Co$ is any signature of $C(B_2,h)$
\end{itemize}
\end{lemma}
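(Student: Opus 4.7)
All three parts rest on the orthogonality $\Co\perp\Do$ established in Proposition~\ref{pairsfromphirs}: whenever the supports of $X\in\Co$ and $Y\in\Do$ meet in exactly two elements $x,y$, the condition $0\in\pconv(P_{X,Y})$ collapses to the antipodal identity $X(x)/X(y)=-Y(x)/Y(y)$, and this is the engine of each of the calculations below.

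For part~(1), the set $A$ has rank $d-1$ (since $A\cup e$ is a basis), so $\cl(A)$ is a hyperplane and there is a cocircuit $Y\in\Do$ with $\supp Y=E\setminus\cl(A)$. Because $A\cup e, A\cup f, A\cup g$ are bases, none of $e,f,g$ lies in $\cl(A)$, so $\{e,f,g\}\subseteq\supp Y$; meanwhile each basic circuit $C(A\cup x,y)$ is contained in $A\cup\{x,y\}$, so its support meets $\supp Y$ in exactly $\{x,y\}$. Orthogonality then gives $X_{x,y}(x)/X_{x,y}(y)=-Y(x)/Y(y)$ for each of the three pairs, and multiplying the identities for $(e,f)$ and $(f,g)$ and comparing with the one for $(e,g)$ yields the stated equation.

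For part~(2) the content is that $e,f,g$ all lie in $C:=C(B_{e,f},g)$. Indeed $C\subseteq A\cup\{e,f,g\}$ is a circuit and therefore cannot be contained in any of the three bases $A\cup\{f,g\}$, $A\cup\{e,g\}$, $A\cup\{e,f\}$; thus each of $e$, $f$, $g$ belongs to $C$, so that $X(e),X(f),X(g)\in S^1$ and the identity is a tautology in the multiplicative group $S^1$.

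For part~(3), the plan is to apply part~(1) twice and divide. Provided $A\cup\{g,h\}$ is also a basis, both $A\cup g$ and $A\cup h$ are independent of rank $d-1$, and part~(1) applies: the triple $(e,f,h)$ over $A\cup g$ yields an identity involving phased circuits whose supports, by uniqueness of basic circuits on the four-element set $A\cup\{e,f,g,h\}$, are exactly $C_1$, $C_2$ and $C_2'$, while the triple $(e,f,g)$ over $A\cup h$ yields an identity involving circuits with supports $C_1'$, $C_2$ and $C_2'$. The factors from $X_2$ and $X_2'$ appear in both identities and cancel upon dividing, producing precisely the relation of part~(3). In the degenerate case where $A\cup\{g,h\}$ is not a basis one has $\cl(A\cup g)=\cl(A\cup h)$, a hyperplane whose associated cocircuit $Y$ meets both $\supp X_1$ and $\supp X_1'$ only in $\{e,f\}$, so orthogonality forces $X_1(e)/X_1(f)=X_1'(e)/X_1'(f)$; in the same case $C_2=C_2'$ is the unique circuit in $A\cup\{g,h\}$, so $X_2$ and $X_2'$ agree up to a phase, and the identity follows.

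The main obstacle I anticipate is the combinatorial bookkeeping in part~(3): verifying, via uniqueness of basic circuits on the relevant three- and four-element subsets of $A\cup\{e,f,g,h\}$, that the triangles coming from the two applications of part~(1) really involve precisely $C_1,C_1',C_2,C_2'$, and cleanly separating off the degenerate case when $A\cup\{g,h\}$ is not a basis. Once the appropriate cocircuits have been located, the orthogonality arguments themselves are routine.
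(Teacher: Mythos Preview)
Your arguments for (1) and (2) match the paper's (the paper records (2) as ``evident'').

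For (3) you take a genuinely different route. The paper first disposes of the case $C_1=C_1'$ and $C_2=C_2'$, then (via ``without loss of generality $g\in C_1$'') shows that $B:=A\cup\{e,f\}$ is a basis and decomposes the square into four triangles meeting at $B$; two of these are type-(1) triangles over $A\cup e$ and $A\cup f$, and the other two are trivial type-(2) triangles on the circuits $C_1$ and $C_1'$. You instead introduce the opposite vertex $A\cup\{g,h\}$ and apply (1) only twice, over $A\cup g$ and $A\cup h$; your degenerate case (that vertex dependent) is precisely $C_2=C_2'$, handled directly via the cocircuit complementary to $\cl(A\cup g)=\cl(A\cup h)$. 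Your decomposition is a bit more economical and sidesteps the WLOG symmetry.

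One caution: the sentence ``the factors from $X_2$ and $X_2'$ \ldots\ cancel upon dividing'' is not literally correct, and you should carry the division out explicitly. In your two applications of (1) the $X_2$-factor is $X_2(f)/X_2(h)$ in the first identity and $X_2(f)/X_2(g)$ in the second (and likewise the $X_2'$-factor is $X_2'(e)/X_2'(h)$ resp.\ $X_2'(e)/X_2'(g)$), so after division what survives is $X_2(g)/X_2(h)$ and $X_2'(g)/X_2'(h)$, yielding
\[
\frac{X_1(e)}{X_1(f)}\cdot\frac{X_2'(h)}{X_2'(g)}\;=\;\frac{X_1'(e)}{X_1'(f)}\cdot\frac{X_2(h)}{X_2(g)}\,.
\]
This is the displayed identity of part~(3) with $X_2$ and $X_2'$ interchanged (equivalently, with both second ratios inverted). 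In fact this is the correct form: the display as printed is a typo---one checks this on any realizable rank-$2$ example---and it is the interchanged form that is actually needed in the proof of Proposition~\ref{perp-phir}. The paper's four-triangle argument yields the same corrected identity. So your approach is sound, but do the algebra rather than asserting a cancellation that does not literally occur.
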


The following diagrams illustrate the three cases of the lemma.
\begin{center}
\begin{tabular}{ccccc}
\begin{diagram}
B_e & \rTo^{\frac{X_{e,f}(f)}{X_{e,f}(e)}} & B_f \\
    & \rdTo_{\frac{X_{e,g}(g)}{X_{e,g}(f)}}& \dTo_{\frac{X_{f,g}(g)}{X_{f,g}(f)}} \\
    &             & B_g
\end{diagram} 
&$\quad\quad\quad$&
\begin{diagram}
B_{e,f} & \rTo^{\frac{X(g)}{X(e)}} & B_{f,g} \\
    & \rdTo_{\frac{X(g)}{X(f)}}& \dTo_{\frac{X(e)}{X(f)}} \\
    &             & B_{e,g}
\end{diagram} 
&$\quad\quad\quad$ &
\begin{diagram}
B_2       &\rTo^{\frac{X_2'(h)}{X_2'(g)}}& B_2' \\ 
\uTo^{\frac{X_1(e)}{X_1(f)}} &              &\uTo_{\frac{X_1'(e)}{X_1'(f)}}\\
B_1       &\rTo^{\frac{X_2(h)}{X_2(g)}} & B_1'
\end{diagram} \\
Case (1) && Case (2) && Case (3)
\end{tabular}
\end{center}

\begin{proof}
\noindent (1) For the cocircuit $D:=E\setminus \cl( A)$, we have $D\cap C(A\cup x, y)=\{x,y\}$ for all $x,y\in\{e,f,g\}$. therefore, for any $Y\in \Do$ with $\supp(Y)=D$ we have $Y\perp X_{x,y}$ for all $x,y\in\{e,f,g\}$ and thus 
$$
\frac{X_{e,f}(e)}{X_{e,f}(f)}\frac{X_{f,g}(f)}{X_{f,g}(g)}= 
\bigg( - \frac{Y(e)}{Y(f)}\bigg)\bigg( - \frac{Y(f)}{Y(g)}\bigg)=\frac{Y(e)}{Y(g)}
=-\frac{X_{e,g}(e)}{X_{e,g}(g)}.
$$

\noindent (2) is evident.

\noindent (3) The claim is trivial when $C_1=C_1'$ and $C_2=C_2'$. If this is not the case, then without loss of generality suppose that $g\in C_1$. Then we can use $C_1$ to eliminate $g$ from $B_1$ (or from $B_2$), and we obtain that $B:=A\cup\{e,f\}$ is a basis. Since $g\in C_1$ implies $h\in C_1'$ (for else one could eliminate $e$ and obtain a circuit contained in $B_1$), we can use $C_1'$ to eliminate $h$ from $B_1'$ (or from $B_2'$). Then the basis graph of the matroid contains

$$
\begin{diagram}
B_2       &\rLine(3,0)&   &       & B_2' \\ 
\uLine(0,3) &\rdLine    & T'  &\ruLine &\uLine(0,3)\\
         & T        & B &   T''   &  \\
          &\ruLine    & T''' &  \rdLine    &\\
B_1       &\rLine(3,0)&  &         & B_1'
\end{diagram}
$$

and we can apply part (1) to the ``triangles" $T,T',T'',T'''$ to conclude.
\end{proof}

\begin{prop}\label{perp-phir} If $ \Co$ and $\Do$ are the phased circuits resp.\ phased cocircuits of a complex matroid, then 
$ \Co= \Co_\phr$ and $ \Do= \Do_\phr$ for a phirotope $\phr$.
Moreover, $\phr$ is uniquely determined up to a nonzero constant.  
\end{prop}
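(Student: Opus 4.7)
The plan is to construct $\phr$ explicitly, following the strategy sketched just before Lemma~\ref{k34}. Fix an ordered basis $B_0=(b_1,\ldots,b_d)$ of the underlying matroid $\Mm$ and set $\phr(b_1,\ldots,b_d):=1$, extending alternately to every permutation of $B_0$. For each edge $\{B_1,B_2\}$ of the basis graph, with $B_1=A\cup e$ and $B_2=A\cup f$, the basic circuit $C(B_1,f)$ supports a phased circuit $X\in\Co$ which is unique up to an element of $S^1$ by axiom (S2); the quotient $-X(e)/X(f)$ is therefore well defined, and we use it as the pivot ratio relating the value of $\phr$ on an ordering of $B_1$ to the value of $\phr$ on the ordering of $B_2$ obtained by substituting $f$ for $e$ in the same position, exactly as in Definition~\ref{defn:complexpivotrule}. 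For an arbitrary basis $B$, a path $\gamma$ in the basis graph from $B_0$ to $B$ propagates the initial value to an ordering of $B$, which then extends alternately to all permutations of $B$.

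The main obstacle is well-definedness, i.e., independence of the choice of $\gamma$. By Maurer's Theorem~\ref{theo:maurer}, any two paths from $B_0$ to $B$ are related by a finite sequence of elementary moves on closed subpaths, so it suffices to check invariance of the product of pivot ratios under a single elementary move. These elementary moves correspond precisely to the configurations analyzed in Lemma~\ref{k34}: cases (1) and (2) handle the two types of triangles, while case (3) handles the squares. Hence $\phr$ is well defined on every basis.

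It remains to verify the phirotope axioms and identify the signatures. Axioms ($\phr 1$) and ($\phr 2$) hold by construction. For ($\phr 3$), consider the Grassmann--Pl\"ucker relation on $(x_1,\ldots,x_{d+1})$ and $(y_1,\ldots,y_{d-1})$: we may assume $Y:=\{y_1,\ldots,y_{d-1}\}$ is $\Mm$-independent, for otherwise every term vanishes; then $\cl(Y)$ is a hyperplane and $D:=E\setminus\cl(Y)$ is a cocircuit of $\Mm$, while the dependent set $X:=\{x_1,\ldots,x_{d+1}\}$ contains a circuit $C$. Pick $Z\in\Co$ with $\supp(Z)=C$ and $W\in\Do$ with $\supp(W)=D$. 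By construction, $Z(x_k)$ equals, up to a $k$-independent nonzero scalar, $(-1)^k\phr(x_1,\ldots,\widehat{x_k},\ldots,x_{d+1})$ for $x_k\in C$ and vanishes otherwise; the computation used to prove part (2) of Corollary~\ref{DconFi}---which only requires the pivot relations for $\Co$ and the orthogonality $W\perp\Co$---shows $W(x_k)$ proportional to $\phr(x_k,y_1,\ldots,y_{d-1})$ for $x_k\in D$. The hypothesis $Z\perp W$ then yields, after clearing a common scalar, exactly the Grassmann--Pl\"ucker relation.

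Finally, $\Co\subseteq\Co_\phr$ by construction, and equality $\Co=\Co_\phr$ follows from (S2) since both are complex circuit signatures of $\Mm$; analogously $\Do=\Do_\phr$ follows from the proportionality of $W$ derived above together with (S2*). Uniqueness of $\phr$ up to a global element of $S^1$ is immediate: once the value of $\phr$ on $B_0$ is fixed, every pivot ratio---and hence every value of $\phr$---is determined by $\Co$.
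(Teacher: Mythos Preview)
Your proposal is correct and follows essentially the same strategy as the paper: both construct $\phr$ by propagating along the basis graph from a fixed base vertex, appeal to Maurer's theorem to reduce well-definedness to the short cycles of Lemma~\ref{k34}, derive the identities corresponding to Corollary~\ref{DconFi}, and then use $\Co\perp\Do$ to verify axiom~($\phr\,3$).

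One technical point deserves a sentence of care. Your ``same position substitution'' convention means that traversing a closed path in the basis graph can return you to a \emph{different ordering} of the starting basis; for instance, the type-(2) triangle $B_{e,f}\to B_{f,g}\to B_{e,g}\to B_{e,f}$ sends $(\ldots,e,f)$ to $(\ldots,f,e)$, and the product of the three pivot ratios $-X(\cdot)/X(\cdot)$ is $-1$, not $+1$. So ``invariance of the product of pivot ratios'' is not literally what you need---rather, you need that the product around each short cycle equals the sign of the permutation by which the ordering has changed, so that the alternating extension is consistent. This is exactly what Lemma~\ref{k34} gives once you track the orderings, but it should be said. The paper sidesteps this bookkeeping by fixing a total order on $E$ and folding the sign $(-1)^{i-j+1}$ into the edge label $\gamma$, so that products around closed paths are genuinely $1$; your convention is equally valid but requires the extra observation above.
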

\newcommand{\para}[1]{{\newline\vspace{2pt}\noindent\em #1}}
\begin{proof} In this proof we fix a total ordering $>$ of the ground set $E$ of the underlying matroid $\Mm$. We will often identify a subset $A\subseteq E$ with the corresponding sequence ordered by $>$.
\para{1. Labeling the basis graph.} %Let $\Mm$ denote the underlying matroid, and fix a total ordering $>$ of its ground set $E$. %We fix a basis $B$ of $\Mm$ and 
Consider the basis graph $G$ of $\Mm$.
%We choose an arbitrary orientation of every edge of $G$, and will from now regard $G$ as a directed graph.
We define a function $\gamma$ on ordered pairs of adjacent vertices of $G$. Given two bases $B_1$, $B_2$ of $\Mm$ corresponding to a pair of adjacent vertices of $G$ we define
$$
\gamma(B_1,B_2):=(-1)^{i-j}\frac{X(x_i)}{X(x_j)},
$$ 
where $B_1\cup B_2=\{x_0,\ldots ,x_d\}$, $B_1\setminus B_2=\{x_i\}$, $B_2\setminus B_1=\{x_j\}$, the $x_l$ are numbered in increasing order with respect to $>$, and $X\in  \Co$ is any phased circuit with $\supp(X)=C(B_1,x_j)$.
Clearly, $\gamma(B_1,B_2)=\gamma(B_2,B_1)^{-1}$.

Given any closed path $A=B_0,B_1,B_2,\ldots,B_k=A$ in $G$, 
$$
\prod_{i=0}^{k-1}\gamma(B_i,B_{i+1})=1.
$$
To see this note that by Theorem~\ref{theo:maurer} it is enough to check the cases $k=3,4$, which is easy to do using Lemma \ref{k34} and keeping track of the signs.
\para{2. Construction of the phirotope associated with $\CC,\DD$.}
If we fix a ``basepoint'' $B\in V(G)$, Step 1 above tells us there is a well-defined quantity associated to every $B'\in V(G)$ and given by
$$
\overline{\fun}(B'):=\prod_{i=0}^{k-1}\gamma(B_i,B_{i+1})
$$
where $B=B_0,B_1,\ldots , B_k=B'$ is any path from $B$ to $B'$ in $G$, and the empty product equals $1$.

Now we are ready to define a function $\fun : E^d\rightarrow S^1\cup\{0\}$ as follows. Given $x_1 < x_2 <\ldots < x_d\in E$, let
$$
\fun'(x_1,\ldots x_d):=
\left\{\begin{array}{ll}
0 &\textrm{if } \{x_1,\ldots, x_d\} \not\in V(G),\\
\overline{\fun}(\{x_1,\ldots,x_d\}) &\textrm{else.}
\end{array}\right.
$$

\noindent This function can be extended to any ordered $d$-tuple of elements of $E$ by setting 
$$
\fun(x_1,\ldots ,x_d):=\sgn(\sigma)\fun'(x_{\sigma(1)},\ldots,x_{\sigma(d)}),
$$
where $\sigma$ is a permutation such that $x_{\sigma(i)}<x_{\sigma(j)}$ if $i<j$.
For every $X\in \Co$ let $\supp(X)=\{x_0,x_1,\ldots,x_l\}$ be numbered, as usual, in increasing order with respect to $>$. For all $0\leq i,j\leq l$ we can complete $\supp(X)\setminus x_i$ to a basis of $\Mm$ by a set $\{a_1,\ldots a_m\}$. Then $\supp(X)=C(A_i,x_i)$, where
$A_i:=\{x_0,\ldots,\hat{x}_i,\ldots , x_l,a_1,\ldots a_m\}$. We have
\begin{eqnarray}\label{fi_suX}
\frac{X(x_i)}{X(x_j)}=  (-1)^{i-j}\gamma(A_j,A_i)=(-1)^{i-j} \overline{\fun}(A_j)^{-1}\overline{\fun}(A_i)\\
=
(-1)^{i-j}\frac{\fun(x_0,\ldots,\hat{x}_i,\ldots,x_l,a_1,\ldots,a_m)}{\fun(x_0,\ldots,\hat{x}_j,\ldots,x_l,a_1,\ldots,a_m)}\nonumber
\end{eqnarray}

For any pair of adjacent vertices $B_1,B_2\in V(G)$ with $\{f\}=B_2\setminus B_1$, $\{e\}=B_1\setminus B_2$, the basic circuit $C=C(B_1,f)$ of $\Mm$ intersects the basic circuit $D=C^*(E\setminus B_2, e)$ of $\Mm^*$ in the set $\{e,f\}$. Choose $X\in \Co$, $Y\in\Do$ with $\supp(X)=C$, $\supp(Y)=D$. By $ \Co\perp \Do$ we have 
$$
\frac{X(e)}{Y(e)}=-\frac{X(f)}{Y(f)}
$$
\noindent and so
$$
\frac{Y(e)}{Y(f)}=-\frac{X(e)}{X(f)}=\gamma(B_1,B_2).
$$

%Recall the basepoint $B$ of $G$. If we choose $E\setminus B$ as a basepoint for $G^*$, $\gamma^*$ induces a well-defined function $\fun^*:V(G^*)\rightarrow S^1$. 

For every $Y\in  \DD$ and $e,f\in\supp(Y)$, choose a basis $T$ of the hyperplane $H$ of $\Mm$ defined by $H:=E\setminus \supp(Y)$. Then, $T\cup\{e,f\}$ contains a circuit $C$ with $C\cap\supp(Y)=\{e,f\}$. Writing $T_e=T\cup e$, $T_f=T\cup f $ we have, as above,
\begin{eqnarray}\label{fi_suY}
\frac{Y(e)}{Y(f)}=(-1)^{i-j}\gamma(T_e,T_f)=(-1)^{i-j}\overline{\fun}(T_e)^{-1}\overline{\fun}(T_f) \\
=\frac{\fun(f,t_2,\ldots,t_d)}{\fun(e,t_2,\ldots,t_d)}\nonumber
\end{eqnarray}

\noindent where $e$ and $f$ are respectively $i$-th and $j$-th in the $>$-ordering of $T\cup\{e,f\}$, and $t_2,\ldots t_d$ is any total ordering of $T$. In view of Corollary \ref{DconFi}, equations (\ref{fi_suX}) and (\ref{fi_suY}) show that $ \CC= \CC_{\fun}$, $ \DD= \DD_{\fun}$.
\para{3. Verification of the axioms for phirotopes}
The function $\fun$ we constructed so far is an alternating, nonzero function $E^d\rightarrow  S^1\cup 0$. We now prove that $\fun$ satisfies ($\phr\,$3). To this end, consider any two subsets $S:=\{x_0,\ldots x_d\}\subset E$, $T:=\{y_2\ldots y_d\}\subset E$.  If for some $j$ the set $S\setminus x_j$ is a basis of the underlying matroid $\Mm$, then $S\setminus x_i$ is a basis of $\Mm$ only if $x_i$ is in the basic circuit $C_S$ of $x_j$ with respect to $S\setminus x_j$. Also, $T\cup x_j$ is a basis only if $T$ is an independent set and $x_j$ is in the cocircuit $D_T$ given by the complement of the hyperplane spanned by $T$.

We may from now on suppose that $T$ is independent and $S\setminus x_j$ is a basis of $\Mm$ for some $j$.  Then, the product
$$\fun(x_0,\ldots,\hat{x}_i,\ldots,x_d)\fun(x_i,y_2,\ldots,y_d)$$
is nonzero if and only if $x_i\in C_S\cap D_T$. 

We thus have to consider the set
$$Q:=\{(-1)^i\fun(x_0,\ldots,\hat{x}_i,\ldots,x_d)\fun(x_i,y_2,\ldots,y_d)\mid x_i\in C_S\cap D_T \}$$
and show that $0\in\relint\conv Q$.

Let us suppose without loss of generality that $x_0\in C_S\cap D_T$. Take $X\in  \CC$ such that $\supp(X)=C_S$ and $X(x_0)=1$,  $Y\in \DD$ such that $\supp(Y)=D_T$ and $Y(x_0)=1$.

Then we may consider the rotated set $\mu Q$ for $\mu=\fun(x_1,\ldots,x_d)^{-1}\fun(x_0,y_2,\ldots,y_d)^{-1}$. By equations (\ref{fi_suX}) and (\ref{fi_suY})
\begin{align*}
\mu Q &= 
\bigg\{
(-1)^{i}\frac{\fun(x_0,\ldots,\hat{x}_i,\ldots,x_d)}{\fun(x_1,\ldots,x_d)}
\frac{\fun(x_i,y_2,\ldots,y_d)}{\fun(x_0,y_2,\ldots,y_d)} 
\,\bigg\vert\, x_i\in C_S\cap D_T \bigg\}\\
&=
\bigg\{
-\frac{\fun(x_0,x_1\ldots,\hat{x}_i,\ldots,x_d)}{\fun(x_i,x_1,\ldots,\hat{x}_{i},\ldots,x_d)}
\frac{\fun(x_i,y_2,\ldots,y_d)}{\fun(x_0,y_2,\ldots,y_d)} 
\,\bigg\vert\, x_i\in C_S\cap D_T \bigg\}\\
&=
\bigg\{
\frac{X(x_i)Y(x_0)}{X(x_0)Y(x_i)}
\,\bigg\vert\, x_i\in C_S\cap D_T \bigg\}
=
\bigg\{
\frac{X(x_i)}{Y(x_i)}
\,\bigg\vert\, x_i\in C_S\cap D_T \bigg\},\\
\end{align*}

\noindent thus $0\in\relint \conv Q$ if and only if $0\in(\relint \conv \mu Q) = P_{X,Y}$ - but the latter is the case because, by assumption, $X\perp Y$. 
\end{proof}

\section{From phirotopes to circuits to dual pairs}\label{sec:elimination}

This section will prove the two implications forming the "bottom of the triangle" in Figure~\ref{fig_chart}.

In the following we will often argue by induction on the size of the ground set of the complex matroid. As preparation, we prove that our notion of complex circuit orientation (Definition~\ref{def:circorient}) behaves well with respect to the operations of deletion and contraction as introduced in Definition~\ref{def:minor:phsets}.

\begin{prop}\label{minor:CM}
If $ \Co$ is a complex circuit orientation of a matroid $\Mm$ on $E$, then for all $e\in E$ \begin{itemize}
\item[(1)] $ \Co\setminus e$ is a complex circuit orientation of the matroid $\Mm\setminus e$, and
\item[(2)] $ \Co/e$ is a complex circuit orientation of the matroid $\Mm/e$.
\end{itemize} 
\end{prop}

\begin{proof}
Let $ \Co$ be as in the statement. For (1) note that the elements of 
$ \Co\setminus e$ are all phased circuits in $ \Co$ not containing $e$ in their support, and so 
(ME) holds in $ \Co\setminus e$ because, by Lemma \ref{trans_mod}.(1), a modular pair of circuits in $\Mm\setminus e$ is modular in $\Mm$ too, and the result of modular elimination between them in $\Mm$ is again an element of $\Mm\setminus e$.

For (2), recall first that for every element of  $X\in \Co/e$ there is a unique  element $X'\in \Co$ with $\supp(X)\subseteq \supp(X')\subseteq\supp(X)\cup e$ so that $X'(x)=X(x)$ for all $x\in\supp(X)$.
Lemma \ref{trans_mod}.(2) ensures that for every modular pair $X,Y$ in $ \Co/e$ the corresponding $X',Y'\in \Co$ defined as above also define a modular pair. As above, the element $Z'$ obtained by modular elimination of $f$ between $X'$ and $Y'$ restricts to $Z\in \Co/e$ with $f\in\supp(Z)\subset\supp(X)\cup\supp(Y)$. By the uniqueness of modular elimination we are done. 
\end{proof}

\subsection{From phirotopes to circuit orientations}\label{phiro>circs}

In this section we prove that the set $\Co_\phr$ of circuits induced by a phirotope $\phr$ satisfies the conditions of Definition~\ref{def:COM:circ} for phased circuits. Conditions $(\Co 0)$ and $(\Co 1)$ are clear; we have to prove that (ME) holds in $\Co_\phr$, and as a stepping stone we prove the following ``special elimination'' property.

\begin{lemma}[SE]\label{SE:phr} Let $\phr$ be a phirotope on the ground set $E$.
For all $X,Y\in \Co_\phr$ and $e,f\in\supp(X)\cap\supp(Y)$ such that $X(e)=Y(e)$ and $X(f)\neq Y(f)$, there is $Z\in \Co$ with $f\in\supp(Z)\subseteq \supp(X) \cup \supp(Y)$.
\end{lemma}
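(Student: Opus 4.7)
My plan is to reduce the lemma, via Definition~\ref{defn:complexpivotrule}, to a matroid-theoretic statement about circuits of $\Mm_\phr$, and then to derive that statement by contradiction using a Grassmann--Pl\"ucker relation $(\phr\,3)$ together with the pivot rule. I read the conclusion with the natural ``$\setminus e$'' implicit, since otherwise $Z=X$ trivially satisfies the stated inclusion.

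First, the hypotheses force $\supp(X)\neq\supp(Y)$: if the supports coincided, $(\Co\,1)$ would give $X=\alpha Y$ with $\alpha\in S^1$, and then $X(e)=Y(e)\neq 0$ would force $\alpha=1$, contradicting $X(f)\neq Y(f)$. By Definition~\ref{defn:complexpivotrule}, every circuit of the underlying matroid $\Mm_\phr$ is the support of some element of $\Co_\phr$ (unique up to $S^1$), so it suffices to produce a matroid circuit $C$ of $\Mm_\phr$ with $f\in C\subseteq (\supp(X)\cup\supp(Y))\setminus e$.

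Since $f\in\supp(X)\cap\supp(Y)$, classical strong matroid circuit elimination does not immediately apply. I would argue by contradiction: assume no such circuit $C$ exists, so that $f$ is independent from $(\supp(X)\cup\supp(Y))\setminus\{e,f\}$ in the minor $\Mm_\phr|((\supp(X)\cup\supp(Y))\setminus e)$. Choose bases $B_X\supseteq\supp(X)\setminus e$ and $B_Y\supseteq\supp(Y)\setminus e$ of $\Mm_\phr$. The pivot rule yields
\[
\frac{X(e)}{X(f)}=-\frac{\phr(f,B_X\setminus f)}{\phr(e,B_X\setminus f)},\qquad
\frac{Y(e)}{Y(f)}=-\frac{\phr(f,B_Y\setminus f)}{\phr(e,B_Y\setminus f)},
\]
where $B_X\setminus f$ and $B_Y\setminus f$ denote the corresponding $(d-1)$-tuples under a fixed ordering. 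Applying $(\phr\,3)$ to the $(d+1)$-set $\{e,f\}\cup(B_X\setminus f)\cup\{b\}$ for a suitable $b\in B_Y\setminus B_X$, paired with the $(d-1)$-set $B_Y\setminus\{e,f,b\}$, the contradiction hypothesis (the matroid-independence of $f$) forces all but two of the Grassmann--Pl\"ucker summands to vanish; the surviving two-term identity equates the two displayed ratios. Combined with $X(e)=Y(e)$, this gives $X(f)=Y(f)$, a contradiction. Hence the required matroid circuit exists, and Definition~\ref{defn:complexpivotrule} supplies the desired $Z\in\Co_\phr$.

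The main obstacle is the combinatorial bookkeeping in the final step: one must choose the auxiliary element $b$ and arrange the $(d+1)$- and $(d-1)$-index sets so that the matroid-independence of $f$ translates precisely into the vanishing of all but the two phirotope products needed for the ratio identity. The template for such ``mostly-vanishing Grassmann--Pl\"ucker'' arguments appears already in the proof of Proposition~\ref{pairsfromphirs}, and its strategy should adapt to the present situation.
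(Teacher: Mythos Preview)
Your overall strategy---assume the desired $Z$ does not exist, deduce that $f\notin\cl\bigl((\supp(X)\cup\supp(Y))\setminus\{e,f\}\bigr)$, and then derive $X(f)/X(e)=Y(f)/Y(e)$ to contradict $X(f)\neq Y(f)$---is exactly the paper's strategy, and your observation that the stated conclusion needs the implicit ``$\setminus e$'' is correct (the paper's own proof uses it as well).

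Where you diverge is in how the ratio equality is obtained. The paper does not apply a single Grassmann--Pl\"ucker relation on $\phr$. Instead it exploits the contradiction hypothesis to choose the completions with extra structure: it extends $\supp(X)\setminus\{e,f\}$ and $\supp(Y)\setminus\{e,f\}$ to bases $A'$, $B'$ of the \emph{same} hyperplane $H$ of $\Mm_\phr$ (possible precisely because $f\notin\cl(A\cup B)$, so some hyperplane contains $A\cup B$ and avoids both $e$ and $f$). It then passes to the \emph{dual} phirotope: with $D'=(E\setminus H)\setminus\{e,f\}$, both $H\setminus A'$ and $H\setminus B'$ are $\phr^*$-completions of $D'$, and the already-proved well-definedness lemma (for any phirotope, hence for $\phr^*$) gives
\[
\frac{X(f)}{X(e)}=-\frac{\phr(e,A')}{\phr(f,A')}=\frac{\phr^*(e,D',H\setminus A')}{\phr^*(f,D',H\setminus A')}=\frac{\phr^*(e,D',H\setminus B')}{\phr^*(f,D',H\setminus B')}=\frac{Y(f)}{Y(e)}.
\]

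Your proposed direct route has two problems. First, a bookkeeping error: your $(d+1)$-tuple $\{e,f\}\cup(B_X\setminus f)\cup\{b\}$ has $d+2$ elements, since $f\in B_X$ makes $\{e,f\}\cup(B_X\setminus f)=\{e\}\cup B_X$ already of size $d+1$, and $b\notin B_X\cup\{e\}$. Second, and more substantively, with arbitrarily chosen completions $B_X$, $B_Y$ a single Grassmann--Pl\"ucker relation will not collapse to two terms: the matroid-independence of $f$ from $(\supp(X)\cup\supp(Y))\setminus\{e,f\}$ constrains $f$, not the other pivoting elements $x_k\in B_X$, so there is no reason for the terms with $x_k\in B_X\setminus\{f\}$ to vanish. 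To kill those terms you would either have to choose the completions inside a common hyperplane (landing you in the paper's picture) or iterate a two-term identity exchanging one basis element at a time---which is exactly the content of the well-definedness lemma, and the paper's passage to $\phr^*$ is the clean way to invoke it.
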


\begin{proof}  Suppose by way of contradiction that there are $X,Y\in\Co_\phr$, $e,f\in E$ so that the claim does not hold and let $A:=\supp(X)\setminus\{e,f\}$, $B:=\supp(Y)\setminus\{e,f\}$. Then $f\not \in\cl(A\cup B)$ and we can extend $A$ to $A'$ and $B$ to $B'$, where $A'$ and $B'$ are bases of the hyperplane $H$ containing $\cl(A\cup B)$ but not $e$ (and thus not $f$ either). Then let $D:=E\setminus H$. It follows that $D\cap \supp(X)=D\cap\supp(Y)=\{e,f\}$. If we fix a total ordering of the ground set $E$ we can think of any subset of $E$ as representing an ordered tuple of elements. With $D':=D\setminus\{e,f\}$ we can write 

$$
\frac{X(f)}{X(e)}=
-\frac{\phr(e,A')}{\phr(f,A')} = \frac{\phr^*(e,D', H\setminus A')}{\phr^*(f, D', H\setminus A')}.
  $$
But since this value does not depend on how we complete the set $D'$ to a complement of a basis of $\Mm_{\phr}$, we have

$$\frac{X(f)}{X(e)}= \frac{\phr^*(e,D', H\setminus B')}{\phr^*(f, D', H\setminus B')}
=-\frac{\phr(e,B')}{\phr(f,B')}=\frac{Y(f)}{Y(e)},
$$
contradicting the assumption.
  
\end{proof}

\begin{prop}[ME]\label{phir_circ} Let % $ \Co_\phr$ be the set of phased circuits of a complex matroid given by the phirotope 
$\phr$ be a phirotope. 
For all $X,Y\in \Co_\phr$ with $X\neq \mu Y$ for all $\mu\in S^1$ and such that $\supp(X)$, $\supp(Y)$ is a modular pair of circuits of $\Mm_\phr$, given $e,f\in E$ with $X(e)=-Y(e)\neq 0$ and $X(f)\neq Y(f)$, there is $Z\in \Co_\phr$ with $f\in \supp(Z)\subseteq \supp(X)\cup\supp(Y)\setminus\{e\}$, and 
$$
\left\{\begin{array}{ll}
Z(f)\in\; \pconv(\{ X(f),Y(f)\}) & \textrm{if } f\in\supp(X)\cap\supp(Y),\\
Z(f)\leq\max\{X(f),Y(f)\} & \textrm{else.}\\ %if } f\in\supp(X)\textrm{ or }f\in\supp(Y) \\
\end{array}\right.
$$
\end{prop}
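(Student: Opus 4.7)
My plan combines a matroid-level strong circuit elimination with a single well-chosen Grassmann--Pl\"ucker relation.

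First, I would translate the modular-pair hypothesis into the rank identity $r_{\Mm_\phr}(\supp(X)\cup\supp(Y)) = |\supp(X)\cup\supp(Y)|-2$, so that $(\supp(X)\cup\supp(Y))\setminus\{e\}$ is still dependent in $\Mm_\phr$. The strong circuit elimination axiom for ordinary matroids then supplies a circuit $C_Z$ with $f\in C_Z\subseteq(\supp(X)\cup\supp(Y))\setminus\{e\}$. By Proposition~\ref{pairsfromphirs} and property $(\Co 1)$, there is a phased circuit $Z_0\in\Co_\phr$ with $\supp(Z_0)=C_Z$, unique up to an $S^1$-phase; it thus suffices to exhibit $\mu\in S^1$ so that $Z:=\mu Z_0$ (which still belongs to $\Co_\phr$ by $(\Co 0)$) satisfies the required condition on $Z(f)$.

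To pin down $Z(f)$, I would choose a basis $B$ of $\Mm_\phr$ containing $C_Z\setminus\{f\}$ together with a maximal $\phr$-independent subset of $(\supp(X)\cup\supp(Y))\setminus\{e,f\}$, so that via the pivot rule of Definition~\ref{defn:complexpivotrule} the three ratios $X(f)/X(e)$, $Y(f)/Y(e)$, and $Z_0(f)/Z_0(g)$ (for a fixed $g\in C_Z\setminus\{f\}$) can each be written as a quotient of two phirotope values whose arguments lie in $B\cup\{e,f\}$. A Grassmann--Pl\"ucker relation $(\phr\,3)$ applied to a $(d{+}1)$-tuple containing $e,f,g$ and a matching $(d{-}1)$-tuple drawn from $B$, with indices chosen so that the only non-vanishing terms correspond to the positions of $e$, $f$, and $g$, then reads --- after substituting the pivot-rule expressions and using $X(e)=-Y(e)$ --- as
\[
0 \in \pconv\{X(f),\,Y(f),\,-\lambda Z_0(f)\}
\]
for an explicit $\lambda\in S^1$ built from phirotope values. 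Setting $\mu:=\lambda$ yields $Z(f)\in\pconv\{X(f),Y(f)\}$, the required conclusion when $f\in\supp(X)\cap\supp(Y)$. When $f\in\supp(X)\triangle\supp(Y)$, one of $X(f),Y(f)$ vanishes, the same identity collapses to two nontrivial terms, and $Z(f)$ is forced to equal (up to the rescaling $\mu$) the unique non-vanishing of $X(f),Y(f)$, giving $Z(f)\leq\max\{X(f),Y(f)\}$ in the phase-vector order.

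The main obstacle is the indexing of the Grassmann--Pl\"ucker relation. Choosing the $(d{+}1)$- and $(d{-}1)$-tuples so that precisely three summands survive in $(\phr\,3)$ is where the modular-pair hypothesis enters essentially: it ensures that the fundamental circuits of $e$ and of $f$ with respect to $B$ are supported inside $\supp(X)\cup\supp(Y)$, which is exactly what forces the unwanted Grassmann--Pl\"ucker summands to carry a $\phr$-factor evaluated on a non-basis and hence to vanish. The remaining work is careful bookkeeping of the alternating signs of $\phr$ (from $(\phr\,2)$) and the $-1$'s of the pivot rule, so that they combine into the required coefficient $-\lambda$ in front of $Z_0(f)$.
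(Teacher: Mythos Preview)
Your plan is in the same spirit as the paper's: both rest on a single three-term Grassmann--Pl\"ucker relation. The differences are organisational but not negligible.

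The paper dualises first and works with cocircuits. The point of this is that the modular-pair hypothesis then becomes the concrete statement $\supp(X)=E\setminus\cl(A\cup\{x\})$, $\supp(Y)=E\setminus\cl(A\cup\{y\})$ for a common coline $\cl(A)$. This gives an explicit candidate for $Z$, namely the cocircuit $E\setminus\cl(A\cup\{e\})$, normalised by $Z(x)=Y(x)$; the paper then checks the conclusion for \emph{every} element of $\supp(Z)$, with the $\supp(X)\triangle\supp(Y)$ cases handled by direct phirotope computation (giving $Z(g)=X(g)$ or $Z(g)=Y(g)$ exactly) and the intersection case by the relation $(\phr\,3)$ applied to the tuples $(y,f,e,A)$ and $(x,A)$.

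Your route avoids dualising and aims only at the one element $f$, which would suffice for the statement. But the hard step you flag --- choosing a $(d{+}1)$-tuple through $e,f,g$ and a $(d{-}1)$-tuple so that only three terms survive --- is precisely where the paper's dual parametrisation does real work. In the primal picture the pivot expressions for $X(f)/X(e)$, $Y(f)/Y(e)$, $Z_0(f)/Z_0(g)$ involve three a priori different completions to bases, and your sentence ``a maximal $\phr$-independent subset of $(\supp(X)\cup\supp(Y))\setminus\{e,f\}$'' does not by itself force the non-$e$,$f$,$g$ summands to vanish (nor does it make the three surviving products line up with those three ratios after a single common rescaling). You would need to exhibit the analogue of the paper's $(A,x,y)$ explicitly in the circuit picture before the claim ``only three summands survive'' becomes verifiable. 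As written this is the genuine gap.

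Two smaller points. First, matroid strong elimination gives $f\in C_Z$ only when $f\in\supp(X)\triangle\supp(Y)$; for $f\in\supp(X)\cap\supp(Y)$ you need a separate argument that $f$ lies in the (unique) circuit of $(\supp(X)\cup\supp(Y))\setminus\{e\}$, and this is where the hypothesis $X(f)\neq -Y(f)$ is actually used (via orthogonality with a suitable circuit through $\{e,f\}$). Second, your symmetric-difference argument assumes the three-term identity already holds and then collapses it; in the paper this case does not go through $(\phr\,3)$ at all but by direct computation, which is cleaner.
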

\begin{proof}
For ease of notation and terminology, let us prove this for the dual matroid -- that is, when $X,Y\in \Co_{\phr^\ast}$ are cocircuits of the complex matroid defined by $\phr$. 

Since the supports of $X$, $Y$ form a modular pair, we have $x,y\in E$ and $A\subset E$ such that $\supp(X)=E\setminus\cl(A\cup \{x\})$, $\supp(Y)=E\setminus\cl( A\cup \{y\})$. Then it follows that $x\in\supp(Y)$ and $y\in\supp(X)$, for otherwise $\supp(X)=\supp(Y)$ and $X=\mu Y$ for some $\mu\in S^1$, which cannot be. From now on we fix a total ordering $a_2,\ldots ,a_d$ of $A$ and, when appropriate, write $A$ for $a_2,\ldots,a_d$.

Let  $D$ be the (unique) cocircuit complementary to the hyperplane $E\setminus \cl( A\cup \{e\})$. By definition, the sign vector defined by $Z(x):=Y(x)$ and
$$
\frac{Z(f)}{Z(x)} :=\frac{\phr(x,e,A)}{\phr(f,e,A)}  \textrm{ for all } f\neq x
$$
is a signature of $D$. We will prove that it satisfies the requirements.

First of all, consider the element $y\in \supp(X)\cap\supp(Z)$. We have

$$
\frac{Z(y)}{Z(x)}:=\frac{\phr(x,e,A)}{\phr(y,e,A)}=\frac{\phr(x,e,A)}{\phr(x,y,A)}\frac{\phr(x,y,A)}{\phr(y,e,A)}=-\frac{X(y)}{X(e)}\frac{Y(e)}{Y(x)}=\frac{X(y)}{Y(x)}
$$

\noindent and therefore, since we set $Z(x)=Y(x)$, we obtain $Z(y)=X(y)$.

Now let us consider an element $f\in\supp(Z)\setminus\supp(X)$. Then $f\not\in \supp(X)$, and since $f\not\in \cl( A) $ (for otherwise $f\not\in\supp(Z)$) we conclude that we can exchange $f$ for $x$ in the base $A\cup\{x\}$ of the hyperplane $E\setminus\supp(X)=\cl( A\cup\{x\}) =\cl(A\cup\{f\}) $. Therefore we can compute
$$
\frac{Z(f)}{Z(x)}\frac{Y(x)}{Y(f)}=\frac{\phr(x,e,A)}{\phr(f,e,A)}\frac{\phr(f,y,A)}{\phr(x,y,A)}=\frac{\phr(e,x,A)}{\phr(y,x,A)}\frac{\phr(y,f,A)}{\phr(e,f,A)}=\frac{X(e)}{X(y)}\frac{X(y)}{X(e)}=1,
$$
hence $Z(f)=Y(f)$. By a similar argument we obtain $Z(f)=X(f)$ for every $f\in\supp(Z)\setminus\supp(Y)$.

As the last case, we consider an element $f\in\supp(Z)\cap\supp(X)\cap\supp(Y)$. Because the set $B:=\{e,y\}\cup A$ is a basis of $\Mm_\phr$ and $f$ is not an element of $\cl( A\cup \{y\})$ nor of $\cl( A\cup\{e\})$, the basic circuit $C(f,B)$ of $f$ with respect to $B$ contains $e,y,f$, and thus $C(f,B)\cap \supp(X)=\{e,y,f\}$. In order to compute $Z(f)$, we apply the axiom (b) for phirotopes to the tuples of elements $y,f,e,A$ and $x,A$ and conclude that $0$ must be in the relative interior of the phase convex hull of
 
$$
\{{\phr(f,e,A)}{\phr(y,x,A)},\, -{\phr(y,e,A)}{\phr(f,x,A)},\,{\phr(y,f,A)}{\phr(e,x,A)}\}.
$$

This condition does not depend upon rotation - i.e., multiplication by an element of $S^1$. Thus, after multiplication by $(\phr(y,e,A)\phr(y,x,A))^{-1}$, equivalently we may say
$$
0\in\pconv\bigg\{
\frac{\phr(f,e,A)}{\phr( y,e,A)},\;
-\frac{\phr(f,x,A)}{\phr(y,x,A)},\;
\frac{\phr(y,f,A)}{\phr(y,x,A)}\underbrace{\frac{\phr(e,x,A)}{\phr(y,e,A)}}_{=-\frac{Z(y)}{Z(x)}}\bigg\}
$$
which, by Corollary \ref{DconFi} and after reflection with respect to
the real axis, is equivalent to %, can be rewritten as
$$
0\in\pconv\bigg\{
\frac{Z(f)}{Z(y)},\; 
-\frac{X(f)}{X(y)},\;
-\frac{Y(f)}{Y(x)}\frac{Y(x)}{X(y)}
\bigg\}.
$$
We already established that $Z(y)=X(y)$, and thus multiplying everything by this number we conclude that 
$$
0\in\pconv\big\{
Z(f),\; -X(f),\; -Y(f) 
\big\}
$$
\noindent or, equivalently, $Z(f)\in\pconv(\{ X(f), Y(f)\})$.

\end{proof}

\subsection{From circuit orientations to dual pairs}\label{circs>pairs}

The goal of this section is to ``close the circle'' and show that the axiomatization in terms of circuit elimination given in Definition~\ref{def:COM:circ} is equivalent to the axiomatization for dual pairs of Definition~\ref{def:COM:dualpair}. We will do so by showing that the set of circuits of a complex matroid induces a (unique) orthogonal complex signature of the cocircuits of the underlying matroid.

\begin{lemma}\label{SE:circ} Let $\Co$ be a circuit orientation of a complex matroid. Then 
\begin{itemize}
\item[(SE)] for all $X,Y\in  \Co$, $e,f\in E$ with $X(e)=-Y(e)\neq 0 $ and $Y(f)\neq X(f)$, there is $Z\in  \Co$ with $f\in \supp(Z)\subseteq \supp(X)\cup\supp(Y)\setminus e$.
\end{itemize}
\end{lemma}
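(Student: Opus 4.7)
The plan is to derive (SE) from (ME) by a reduction to modular pairs analogous to the classical argument in oriented matroid theory. By Remark~\ref{obs:COM} and Lemma~\ref{lemma:modelim}, the axioms $(\Co 0)$, $(\Co 1)$, (ME) imply that the supports $\Cm:=\{\supp(X)\mid X\in\Co\}$ form the circuits of a matroid $\Mm_\Co$ on $E$; hence it suffices to find a circuit of $\Mm_\Co$ through $f$ contained in $(\supp(X)\cup\supp(Y))\setminus e$ and, by $(\Co 0)$, to sign it. The hypothesis $X(e)=-Y(e)\neq 0$ places $e\in\supp(X)\cap\supp(Y)$, and $Y(f)\neq X(f)$ places $f\in\supp(X)\cup\supp(Y)$. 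I would work under the implicit nondegeneracy assumption $X\neq\mu Y$ for all $\mu\in S^1$ (equivalently, by $(\Co 1)$, $\supp(X)\neq\supp(Y)$), without which $X=-Y$ renders $(\supp(X)\cup\supp(Y))\setminus e=\supp(X)\setminus e$ independent in $\Mm_\Co$ and no such $Z$ can exist.

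The induction is on the length $\ell$ of the longest chain from $\emptyset$ to $\supp(X)\cup\supp(Y)$ in the poset $U$ of Definition~\ref{modgen}. In the base case $\ell=2$, the pair $\supp(X),\supp(Y)$ is modular by definition; after aligning phases using $(\Co 0)$ so that the stronger hypothesis $X(f)\neq -Y(f)$ of (ME) is met for the element $f$ of interest, (ME) directly delivers a $Z\in\Co$ with $f\in\supp(Z)\subseteq(\supp(X)\cup\supp(Y))\setminus e$. In the inductive step $\ell\geq 3$ there is a union of circuits $V\in U$ with $\emptyset\subsetneq V\subsetneq \supp(X)\cup\supp(Y)$ and $V\notin\{\supp(X),\supp(Y)\}$; inside $V$ we select a circuit $C_W$ of $\Mm_\Co$ containing $e$ and, using $(\Co 0)$, pick $W\in\Co$ with $\supp(W)=C_W$ and $W(e)=-Y(e)=X(e)$. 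The pairs $(X,-W)$ and $(W,Y)$ then live in sub-lattices with strictly shorter chain to the top (since $\supp(X)\cup\supp(W)\subseteq \supp(X)\cup V\subsetneq \supp(X)\cup\supp(Y)$, and analogously for $(W,Y)$), so the inductive hypothesis applies to each; composing the two smaller eliminations produces the required $Z$.

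The main obstacle will be the careful selection of the intermediate circuit $C_W$ so that $f$ is captured by at least one of the resulting sub-eliminations, particularly in the delicate case $f\in\supp(X)\cap\supp(Y)$, which demands that $f$ either belong to $C_W$ or to the symmetric difference of $C_W$ with $\supp(X)$ or $\supp(Y)$, with the correct phase relation to permit the next application of (ME). This is precisely the combinatorial core of the classical equivalence between modular and strong circuit elimination for oriented matroids (cf.\ \cite[Sec.~3.6]{BLSWZ}), with the role of sign reversal played here by the rotation freedom afforded by $(\Co 0)$; once the intermediate signings are normalized correctly at $e$, the remainder of the argument is matroidal in nature and follows the oriented-matroid template verbatim.
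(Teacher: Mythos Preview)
Your approach is genuinely different from the paper's. You induct on the height of $\supp(X)\cup\supp(Y)$ in the lattice of circuit-unions, interpolating through an intermediate circuit $W$; the paper instead inducts on the \emph{rank} of the underlying matroid. It picks $a\in\supp(Y)\setminus\supp(X)$, contracts it (Proposition~\ref{minor:CM} guarantees $\Co/a$ is again a complex circuit orientation, now of rank $d-1$), applies the inductive hypothesis in $\Co/a$ to restrictions $X',Y'$ of $X,Y$ that still contain $e$ and $f$, and then lifts the resulting circuit back by reinserting $a$. This is shorter precisely because the work has been front-loaded into Proposition~\ref{minor:CM}; no intermediate circuit with delicate positional properties has to be produced.

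There is a concrete gap in your base case. You claim one can ``align phases using $(\Co 0)$ so that the stronger hypothesis $X(f)\neq -Y(f)$ of (ME) is met.'' But the constraint $X(e)=-Y(e)$ already pins the relative phase of $X$ and $Y$: any rescaling $Y\mapsto\mu Y$ preserving it forces $\mu=1$. The conditions $X(f)\neq Y(f)$ and $X(f)\neq -Y(f)$ are genuinely different, and (ME) yields no circuit through $f$ when $X(f)=-Y(f)\neq 0$. In fact (SE) as literally written is false in that situation: in a rank-$2$ matroid with a parallel pair $\{1,2\}$ and two further points $3,4$ in general position, take $X,Y$ with supports $\{1,3,4\}$, $\{2,3,4\}$ and $X(3)=-Y(3)$, $X(4)=-Y(4)$; then $X(4)\neq Y(4)$, yet the only circuit inside $\{1,2,4\}$ is $\{1,2\}$, which misses $4$. (The paper's own rank-$2$ base case glosses over the same point; the hypothesis should presumably read $X(f)\neq -Y(f)$, matching (ME) and the oriented-matroid axiom $(\Com 2)$, and that is exactly the condition actually used in the one invocation of this lemma inside Proposition~\ref{circ_perp}.)

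Your inductive step is also a plan rather than a proof. You would need to exhibit an intermediate circuit $C_W\ni e$ with $C_W\notin\{\supp(X),\supp(Y)\}$ for which \emph{both} unions $\supp(X)\cup C_W$ and $C_W\cup\supp(Y)$ have strictly smaller height (not automatic from $C_W\subseteq V\subsetneq\supp(X)\cup\supp(Y)$), and to verify that $f$ survives into at least one of the two sub-eliminations with the correct phase hypothesis at $f$. The sentence ``follows the oriented-matroid template verbatim'' is carrying most of the weight here.
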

\begin{proof}
By Lemma~\ref{lemma:modelim} the set $\Cm:=\{\supp(X)\mid X\in \Co\}$ is the set of circuits of a matroid $\Mm$.

We argue by induction on the rank of the $\Mm$. The claim is trivial in rank $0$ and $1$, and every pair of circuits is modular in rank $2$. So let $\Co$ be a circuit orientation of a complex matroid of rank $d>2$ and suppose the claim holds for all complex matroids of smaller rank.

By way of contradiction, let $X,Y\in\Co$ and $e,f\in E$ be such that for all $C\in\Cm$ with $C\subseteq\supp(X)\cup\supp(Y)$, $f\not\in C$. The case where $X(f)Y(f)=0$ is covered by matroid elimination (Definition~\ref{def:mat}.($\Cm 2$)). So suppose $e,f\in\supp(X)\cap\supp(Y)$ and choose $a\in \supp(Y)\setminus\supp(X)$. By Proposition~\ref{minor:CM}, $\Co/a$ is again a complex orientation of the circuits of the rank $d-1$ matroid $\Mm/a$. By definition there are $X',Y'\in \Co/a$ with $X'(g)\leq X(g)$, $Y'(g)\leq Y(g)$ for all $g\in E\setminus a$, and with $f\in \supp(X')\cap\supp(Y')$. With the notation of Definition~\ref{def:minor:phsets}, $Y'=Y_{\setminus a}$ and thus $e\in\supp(Y')$. 

Now, if  $e\not \in\supp(X')$ we reach a contradiction by taking $C:=\supp(X')\cup a$. Otherwise $e,f\in\supp(X')\cap\supp(Y')$ so $X'(e)=X(e)=-Y(e)=-Y'(e)$ and $X'(f)=X(f)\neq Y(f)=Y'(f)$. We apply induction hypothesis to the rank-$(d-1)$ complex matroid $\Co/a$ and find $Z'\in\Co/a$ with $f\in\supp(Z')\subseteq \supp(X')\cup\supp(Y')\setminus e$. Then we reach a contradiction by taking $C:=\supp(Z')\cup a \in \Cm$.
 \end{proof}

\begin{lemma}\label{lemma_treel} Let $\Mm$ be a matroid on the ground set $E$. Consider a circuit $C$ and a cocircuit $D$ of $\Mm$ such that $\vert C\cap D \vert \geq 3$. Then there are elements $e,f\in D\cap C$ and a cocircuit $D'$ of $\Mm$ such that 

\noindent (1) $D$ and $D'$ are a modular pair,

\noindent (2) $e \in (D'\cap C)\subseteq (D\cap C)\setminus f$.
\end{lemma}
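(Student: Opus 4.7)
The plan is to realize $D'$ as a basic cocircuit of a basis $B$ for which $D$ is itself a basic cocircuit. The driving observation is that any two distinct basic cocircuits of a common basis form a modular pair: if $y_1 \ne y_2 \in B$ and $D(B, y_i)$ denotes the unique cocircuit meeting $B$ only at $y_i$ (the dual of Lemma~\ref{basic_circ}, given explicitly as $E\setminus\cl(B\setminus y_i)$), then the complementary hyperplanes $H_i := E \setminus D(B, y_i)$ satisfy $H_1 \cap H_2 \supseteq B \setminus \{y_1, y_2\}$, an independent set of size $r-2$; combined with the general inequality $\rank(H_1\cap H_2)\le r-2$ for distinct hyperplanes, this forces $\rank(H_1\cap H_2) = r-2$, which is exactly the modularity condition via Remark~\ref{pofferbacco}.(2).

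Set $H := E \setminus D$ and fix $f \in C \cap D$. Since $C\cap D\ne\emptyset$, the set $H \cap C = C \setminus D$ is a proper subset of the circuit $C$, hence independent; extend it to a basis $B_H$ of the hyperplane $H$ and put $B := B_H \cup \{f\}$. Since $f\notin H=\cl(B_H)$, $B$ is a basis of $\Mm$, and because $B_H\subseteq H$ we have $B \cap D = \{f\}$. Thus $D = D(B, f)$.

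Next, choose any $z \in (C\cap D) \setminus f$ (nonempty because $|C\cap D|\ge 3$) and look at the basic circuit $C(B, z) \subseteq B \cup \{z\}$. I claim $C(B, z)$ must contain some element $y \in B \setminus C$: otherwise $C(B, z) \subseteq (B \cap C) \cup \{z\} \subseteq C$, whence $C(B, z) = C$ by circuit incomparability, forcing $C \setminus z \subseteq B$ and therefore $(C\cap D)\setminus z \subseteq B\cap D = \{f\}$, contradicting $|C\cap D|\ge 3$. Such a $y$ lies in $B_H$ and differs from $f$. Set $e := z$ and $D' := D(B, y)$. Then $D$ and $D'$ form a modular pair by the opening observation, yielding condition (1). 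The standard duality $z \in D(B, y) \iff y \in C(B, z)$ gives $e = z \in D' \cap C$; and since $D' \cap B = \{y\}$ with $y \notin C$, we get $D' \cap C \subseteq C \setminus B = (C\cap D)\setminus f$, which is condition (2).

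The one substantial input is the modular-pair observation on basic cocircuits of a common basis; the rest is a judicious choice of $B$ together with routine bookkeeping. The hypothesis $|C\cap D|\ge 3$ is used essentially twice: first to provide the element $z \in (C\cap D)\setminus f$, and then to force $C(B,z)\ne C$, producing the crucial element $y \in B_H \setminus C$ on which the entire construction hinges.
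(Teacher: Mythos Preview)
Your proof is correct and follows essentially the same strategy as the paper's: extend $C\setminus D$ to a basis $B_H$ of the hyperplane $E\setminus D$, adjoin an element of $C\cap D$ to obtain a basis of $\Mm$, use a basic circuit to locate an element $y\in B_H\setminus C$, and build $D'$ from it. The only cosmetic difference is that the paper takes $D'=E\setminus\cl((B_H\setminus y)\cup\{z\})$ rather than your $D'=D(B,y)=E\setminus\cl((B_H\setminus y)\cup\{f\})$, which swaps the roles of $e$ and $f$ in the conclusion; your framing via ``two basic cocircuits of a common basis form a modular pair'' is a clean way to package the modularity check.
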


\begin{proof} Let $D$ and $C$ be as above, and let $r$ be the rank of $\Mm$. Then $C\setminus D$ is an independent set of rank at most $r-2$ and can be completed to a basis $B$ of the hyperplane $H:=E\setminus D$.

For every $e\in C\cap D$, the set $B\cup e$ is a basis of $\Mm$. The basic circuit of $f$ with respect to this basis cannot be contained fully in $(C\setminus D)\cup e$, and thus it contains an element $x\in B\setminus (C\setminus D)$. Let $A:=B\setminus x$. Then we have $H=\cl( A\cup x)$ and we can define $$H':=\cl( A\cup f), \quad\quad D':=E\setminus H'.$$

Clearly, $(D'\cap C)\subseteq (D\cap C)\setminus f$. To prove $e\in D'\cap C$, it is enough to show $e\not\in H'$. But if $e$ were in $H'$, then there would be a circuit contained in the set $A\cup\{e,f\}$, and by the uniqueness of basic circuits, this would be also the basic circuit of $f$ with respect to $B\cup e$ - contradicting the definition of $x$. 
\end{proof}

\begin{prop}\label{circ_perp} For any complex circuit orientation $ \Co$ with underlying matroid $\Mm$ there is a unique complex circuit signature $ \Do$ of $\Mm^*$ such that $ \Do\perp \Co$.
\end{prop}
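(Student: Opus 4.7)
My plan is to construct $\Do$ from the 2-element intersections of circuits in $\Co$ with each cocircuit of $\Mm$, then verify the orthogonality $\Do\perp\Co$ and uniqueness.

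For each cocircuit $D$ of $\Mm$ I would fix a reference element $e_0\in D$ and a basis $B$ of the hyperplane $E\setminus D$. For each $f\in D\setminus\{e_0\}$, the basic circuit of $f$ with respect to the basis $B\cup e_0$ of $\Mm$ has support meeting $D$ exactly in $\{e_0,f\}$ (by Lemma~\ref{two_el} and the uniqueness of basic circuits). Picking any $X_f\in\Co$ realizing this circuit, I set $Y_D(e_0):=1$ and $Y_D(f):=-X_f(f)/X_f(e_0)$, and let $\Do$ be the union of the $S^1$-orbits of all such $Y_D$. For well-definedness I would show more generally that the ratio $\alpha_{ef}:=X(f)/X(e)$ is the same for every $X\in\Co$ with $\supp(X)\cap D=\{e,f\}$: if two candidates $X,X'$ with $X(e)=X'(e)$ had $X(f)\neq X'(f)$, then Lemma~\ref{SE:circ} applied to $X$ and $-X'$ at $e$ would produce $Z\in\Co$ with $f\in\supp(Z)\subseteq(\supp(X)\cup\supp(X'))\setminus e$, forcing $|\supp(Z)\cap D|=1$ against Lemma~\ref{two_el}.

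The central task is orthogonality $Y_D\perp X$ for every $X\in\Co$. For $|\supp(X)\cap D|\leq 2$ the statement reduces, via the previous well-definedness, to the cocycle identity
\[\alpha_{ef}=-\alpha_{f,e_0}/\alpha_{e,e_0}\qquad\text{for all }e,f\in D,\]
which I would derive by an application of (ME) to a pair of circuits lifted via Proposition~\ref{minor:CM} from a carefully chosen rank-$2$ minor of $\Mm$ supported on $\{e_0,e,f,\ldots\}$, where basic circuits of $e_0,e,f$ become pairwise parallel and modular in the sense of Definition~\ref{modgen}. For $|\supp(X)\cap D|\geq 3$ I would induct on this quantity: Lemma~\ref{lemma_treel} provides $e,f\in\supp(X)\cap D$ and a cocircuit $D'$ such that $D,D'$ is a modular pair and $|\supp(X)\cap D'|<|\supp(X)\cap D|$, so that $Y_{D'}\perp X$ by the inductive hypothesis; the remaining task is to extract a relation between $Y_D$ and $Y_{D'}$ from (ME) applied to circuit witnesses of the modular cocircuit pair $(D,D')$, possibly reducing further via Proposition~\ref{minor:CM}.

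Uniqueness of $\Do$ is then immediate, since any cocircuit signature $\Do'$ with $\Do'\perp\Co$ must satisfy $Y'(f)/Y'(e)=-\alpha_{ef}$ on every 2-element intersection, forcing $Y'$ to be a global $S^1$-multiple of $Y_D$. The principal obstacle is the inductive step for $|\supp(X)\cap D|\geq 3$: since modular elimination on $\Do$ has not yet been established at this point of the paper, the needed relation between $Y_D$ and $Y_{D'}$ must be obtained by a somewhat delicate explicit computation on the circuit side, rather than by invoking cocircuit elimination directly.
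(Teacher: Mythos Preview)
Your construction of $\Do$ and the well-definedness argument via Lemma~\ref{SE:circ} match the paper's Claim~1 essentially verbatim. For the cocycle identity you take an unnecessary detour through a rank-$2$ minor: the paper observes directly that the two basic circuits $C_{D,e_0,e}$ and $C_{D,e_0,f}$ (built from the \emph{same} basis $A$ of the hyperplane $E\setminus D$) already form a modular pair in $\Mm$, so a single application of (ME) in $\Mm$ itself yields the identity.

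The substantive gap is in your inductive step for $|\supp(X)\cap D|\geq 3$. You apply Lemma~\ref{lemma_treel} to $\Mm$, which produces a \emph{cocircuit} $D'$ modular with $D$; this forces you to relate $Y_D$ and $Y_{D'}$, and as you note, (ME) on the cocircuit side is not yet available. You call this ``a somewhat delicate explicit computation on the circuit side'' but do not indicate what that computation would be, and I do not see one: the circuits used to define $Y_D$ and $Y_{D'}$ live in different hyperplanes and are not obviously related by (ME). The paper avoids this entirely by applying Lemma~\ref{lemma_treel} to $\Mm^*$ instead, which yields a \emph{circuit} $X'$ forming a modular pair with $X$ and satisfying $\supp(X')\cap\supp(W)\subsetneq\supp(X)\cap\supp(W)$. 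Now (ME) between $X$ and $X'$ is immediately available, and eliminating $e$ produces a $Y\in\Co$ whose intersection with $\supp(W)$ is smaller; together with $W\perp X'$ (from the smaller-intersection case) this gives the contradiction.

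Even with this fix, your induction on $|\supp(X)\cap D|$ does not obviously close for $k\geq 4$: knowing $W\perp X'$ and $W\perp Y$ (where $Y$ comes from eliminating $e$) does not by itself force $W\perp X$ once $P_{X,W}$ has more than three points. The paper handles only the case $|\supp(X)\cap\supp(W)|=3$ by the argument above (Claim~2), and then runs the main induction on the \emph{rank} of $\Mm$ (Claim~3): for $W\in\Do$ choose $e\notin\supp(W)$, pass to $\Co/e$ via Proposition~\ref{minor:CM}, and apply the inductive hypothesis to the rank-$(r-1)$ matroid $\Mm/e$, where the restriction of $X$ to a circuit of $\Mm/e$ still fails orthogonality with $W$. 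This is the missing architectural piece in your outline.
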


\begin{proof}
Let $ \Co$ be a complex circuit orientation with underlying matroid $\Mm$. 

\noindent {\em Definition of $ \Do$:} For every cocircuit $D$ of $\Mm$, choose a maximal independent subset $A$ of the hyperplane $D^c$. Then for every $e,f\in D$, there is a unique circuit $C_{D,e,f}$ of $\Mm$ with support contained in $A\cup\{e,f\}$. (Namely, $C_{D,e,f}$ is the basic circuit of $f$ with respect to $A\cup e $.) Choose $X_{D,e,f}\in\Co$ with $\supp(X_{D,e,f})=C_{D,e,f}$.

$$
 \Do:=\bigg\{W\in (S^1\cup\{0\})^E \left\vert\begin{array}{l} 
D:=\supp(W)\in \Cm(\Mm^*),\\
 \forall e,f\in \supp(W),\, \frac{W(e)}{W(f)}=-\frac{X_{D,e,f}(e)}{X_{D,e,f}(f)}
\end{array}\right.\bigg\}
$$

Certainly this $\Do$ is the unique candidate for a complex circuit signature of $\Mm^*$ orthogonal to $\Co$. It remains to see that $\Do$ is, in fact, a well-defined complex circuit signature.

\noindent{\bf Claim 1.} {\em $ \Do$ is well-defined and independent of the choice of the $X_{D,e,f}$.}\\
\noindent{\em Proof.} First we prove independence of the choice of the $X_{D,e,f}$. Given $D\in \Cm^*(\Mm)$ and $e,f\in D$, let $Y$ and $Y'$ be two candidates for $X_{D,e,f}$. Multiplying $Y$ by an element of $S^1$, we may assume $Y(e)=-Y'(e)$. If $Y(e)/Y(f)\neq Y'(e)/Y'(f)$, then by Lemma~\ref{SE:circ} there is $Z\in  \Co$ with $\supp(Z)\cap D=\{f\}$, contradicting Lemma \ref{two_el}.

To conclude that $ \Do$ is well-defined, it is enough to prove that, given $D\in \Cm^*(\Mm)$ and $e,f,g\in D$, 
$$
-\frac{X_{D,f,g}(f)}{X_{D,f,g}(g)}=\bigg(-\frac{X_{D,e,f}(f)}{X_{D,e,f}(e)}\bigg)\bigg(-\frac{X_{D,e,g}(e)}{X_{D,e,g}(g)}\bigg).
$$ 

The circuits $C_{D,e,f}$ and $C_{D,e,g}$ form a modular pair, because their complements both contain the corank $2$ coflat $\cl( E\setminus (A\cup\{f,g\}) )$. Then (modular) elimination of $e$ from $X_{D,e,f}$ and $\frac{- X_{D,e,f}(e)}{X_{D,e,g}(e)}X_{D,e,g}$ gives $Y\in  \Co$ with $f,g\in\supp(Y)$ and 
$
\frac{Y(f)}{Y(g)}=\frac{X_{D,e,f}(f)}{- X_{D,e,f}(e)}\frac{X_{D,e,g}(e)}{X_{D,e,g}(g)}.
$ 
So 
$$
\frac{X_{D,f,g}(f)}{X_{D,f,g}(g)}=\frac{Y(f)}{Y(g)}=-\frac{X_{D,e,f}(f)}{X_{D,e,f}(e)}\frac{X_{D,e,g}(e)}{X_{D,e,g}(g)}
$$
and the claim follows.

\noindent{\bf Claim 2.} {\em Fix $W\in  \Do$. For all $X\in  \Co$ with $\vert\supp(W)\cap\supp(X)\vert \leq 3$, $W\perp X$.}

\noindent {\em Proof.} The claim is either trivial or clear by definition if $\vert\supp(W)\cap\supp(X)\vert \leq 2$. So consider $X\in \Co$ with $\vert\supp(X)\cap\supp(W)\vert =3$, and by way of contradiction let $\supp(W)\cap\supp(X)=\{e,f,g\}$ so that $P_{X,W}$ is contained in a closed half-circle and includes a point in the interior of this half-circle.

By Lemma \ref{lemma_treel} applied to $\Mm^*$, there is a circuit $X'\in  \Co$ and two elements of $\{e,f,g\}$ (say, $e,f$) such that $\supp(X')$ and $\supp(X)$ are a modular pair in $\Mm$, and $e\in\supp(X')\cap\supp(W)\subseteq \supp(X)\cap\supp(W)\setminus f$, and since $\supp(X')\cap\supp(W)|\geq 2$, we know $\supp(X')\cap\supp(W)=\{e,g\}$. Multiplying by an element of $S^1$, we may assume $X'(e)=-X(e)$. Thus 
$$
\frac{X'(g)}{W(g)}=-\frac{X'(e)}{W(e)}=\frac{X(e)}{W(e)},
$$ 
and
$$
P_{X,W}=\bigg\{\frac{X'(g)}{W(g)},\frac{X(f)}{W(f)},\frac{X(g)}{W(g)}\bigg\}.
$$
In particular, these three points lie in the unit circle as described before.
Modular elimination of $e$ between $X'$ and $X$ gives a circuit $Y\in  \Co$ with $\supp(Y)\cap\supp(W)=\{f,g\}$, $Y(f)=X(f)$, and $Y(g)\in\pconv(\{ X(g), X'(g)\})$. Thus $P_{Y,W}$ lies in a half-open half-circle of $S^1$, contradicting $Y\perp W$.

{
\begin{figure}[h]
\begin{center}
%\scalebox{0.85}
{\input{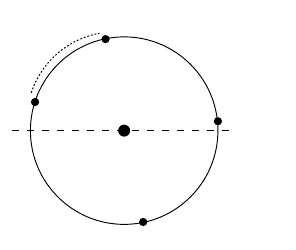_t}}
\caption{Picture for the proof of Claim 2. 
%We omitted the division
  % by phases in $W$.
  }
\end{center}
\end{figure}}

\noindent{\bf Claim 3.} {\em $ \Do\perp \Co$.}

\noindent{\em Proof.} Induction on the rank of $\Mm$. If $\Mm$ has rank $2$, then all circuits have size $3$, and we conclude with Claim 2. Assume that $\Mm$ has rank $r>2$ and the claim holds for all matroids of rank $r-1$ or less.

Suppose by way of contradiction that there is $X\in  \Co$ and $W\in \Do$ with $X\not\perp W$. Choose $e\in E\setminus\supp(W)$. Then, $ \Co/e$ is a complex circuit orientation of the matroid $\Mm/e$ and $ \Do$ is a circuit signature of the matroid $\Mm^*\setminus e$ satisfying $X'\perp W'$ for all $X'\in \Co/e$ and  $W'\in  \Do\setminus e$ with $\vert \supp(X')\cap\supp(W')\vert \leq 2$. Since the rank of $\Mm/e$ is $r-1$, by induction hypothesis  $X'\perp W'$ for all $X'\in  \Co/e$, $W'\in  \Do\setminus e$.

Now look at our $X,W$ and choose $f\in \supp(X)\cap \supp(W)$. By the definition of contraction and deletion, $W\in  \Do\setminus e$ and there is $X'\in \Co/e $ with $X'\subseteq X$ and $f\in\supp(X')$. The vertices of $P_{X',W}$ are a subset of the vertices of $P_{X,W}$ - thus $X\not\perp W$ forces $X'\not\perp W$, contradicting the induction hypothesis.
\end{proof}

At last, we can justify Theorem~\ref{thm:elim} and Theorem~\ref{thm:dualpair:COM}.

\begin{cor}
The definition of complex matroids in terms of their oriented circuits obtained from axioms \textrm{($\Co 0$), ($\Co 1$), (ME)} is equivalent to the definition in terms of phirotopes (and, in turn, with the one in terms of dual pairs). 
\end{cor}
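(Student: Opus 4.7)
The plan is to assemble the equivalence from the propositions proved in Sections~\ref{sec:dualpairs}--\ref{circs>pairs}, exhibiting explicit constructions in both directions and checking that they are mutually inverse (up to the evident equivalence of phirotopes by a global $S^1$-factor). This is essentially a bookkeeping argument, since all the analytic work has already been done; the corollary only needs to certify that the pieces fit together into a cycle.

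First I would go from phirotopes to circuit orientations. Given a phirotope $\phr$, Definition~\ref{defn:complexpivotrule} produces the candidate set $\Co_\phr$. Properties $(\Co 0)$ and $(\Co 1)$ are immediate from the definition, the support condition from Remark~\ref{obs:COM} guarantees that the supports in $\Co_\phr$ are the circuits of $\Mm_\phr$, and Proposition~\ref{phir_circ} supplies the Modular Elimination property (ME). Hence $\Co_\phr$ is a complex circuit orientation in the sense of Definition~\ref{def:COM}.(2).

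Next I would go from circuit orientations back to phirotopes. Given a complex circuit orientation $\Co$ with underlying matroid $\Mm$, Proposition~\ref{circ_perp} produces a unique complex cocircuit signature $\Do$ of $\Mm$ with $\Co\perp\Do$; thus $(\Co,\Do)$ satisfies the axioms for a dual pair (Definition~\ref{def:ax:dualpairs:COM}). Proposition~\ref{perp-phir} then manufactures a phirotope $\phr$, unique up to a global factor in $S^1$, such that $\Co=\Co_\phr$ and $\Do=\Do_\phr$. This is the inverse direction of the correspondence.

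Finally I would verify that the two constructions are mutually inverse. If we start with $\phr$, pass to $\Co_\phr$ and then recover a phirotope via Proposition~\ref{perp-phir}, the uniqueness clause of that proposition forces the recovered phirotope to agree with $\phr$ up to a scalar in $S^1$, i.e.\ to lie in the same equivalence class in the sense of Theorem~\ref{thm:elim}. Conversely, starting from a circuit orientation $\Co$, the pair $(\Co_\phr,\Do_\phr)$ produced from any representative phirotope $\phr$ coincides with the pair $(\Co,\Do)$ by the uniqueness of $\Do$ in Proposition~\ref{circ_perp} and the identity $\Co=\Co_\phr$ from Proposition~\ref{perp-phir}. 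Since the equivalence with dual pairs was established in Theorem~\ref{thm:dualpair:COM}, this simultaneously yields the parenthetical claim. The only subtle point to phrase carefully is the handling of the $S^1$-ambiguity of $\phr$: one should state the equivalence as a bijection between equivalence classes of phirotopes (under multiplication by $S^1$) and complex circuit orientations, exactly as in Theorem~\ref{thm:elim}. No step here requires new calculation; the role of the corollary is to record that the diagram of Section~\ref{sect:COM} commutes.
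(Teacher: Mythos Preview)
Your proposal is correct and follows essentially the same route as the paper: the paper's own proof is the one-liner ``This is just a combination of Proposition~\ref{phir_circ}, Proposition~\ref{circ_perp} and Proposition~\ref{perp-phir},'' and you have spelled out in more detail how those three propositions compose into a cycle and why the two constructions are mutually inverse. One small expository point: you invoke Theorem~\ref{thm:dualpair:COM} for the parenthetical claim, but in the paper's logical order that theorem is \emph{justified by} this corollary (the sentence immediately preceding the corollary reads ``At last, we can justify Theorem~\ref{thm:elim} and Theorem~\ref{thm:dualpair:COM}''), so it would be cleaner to cite Proposition~\ref{pairsfromphirs} and Proposition~\ref{perp-phir} directly for the dual-pair equivalence rather than the theorem they are meant to establish.
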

\begin{proof}
This is just a combination of Proposition \ref{phir_circ}, Proposition \ref{circ_perp} and Proposition \ref{perp-phir}.
\end{proof}

\subsection{Duality}

Given the set $\Co$ of phased circuits of a complex matroid, the corresponding set of phased cocircuits can be defined by orthogonality.

\begin{prop}\label{prop:circ:min} Let $\Co\subseteq (S^1\cup\{0\})^E$ be a complex circuit orientation of $\Mm$. 
Then the set of elements of $\Co^\perp\setminus\{\zero\}$ of minimal support is exactly the complex signature $\DD$ of $\Mm^*$ given by Proposition~\ref{circ_perp}. 
\end{prop}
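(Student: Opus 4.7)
The plan is to identify $\DD' := \min(\Co^\perp \setminus \{\zero\})$ with the $\DD$ produced by Proposition~\ref{circ_perp}. Proposition~\ref{circ_perp} already supplies a complex signature $\DD$ of $\Mm^*$ with $\DD \perp \Co$, i.e.\ $\DD \subseteq \Co^\perp$, containing one $S^1$-orbit of elements over each cocircuit of $\Mm$. Therefore it suffices to prove (i) that the minimal-support elements of $\Co^\perp \setminus \{\zero\}$ are precisely those supported on cocircuits of $\Mm$, and (ii) that any two elements of $\Co^\perp$ sharing such a cocircuit support differ by a factor in $S^1$.

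For (i), I would first show that every $V \in \Co^\perp \setminus \{\zero\}$ satisfies $\supp(V) \supseteq D$ for some cocircuit $D$ of $\Mm$. Given any circuit $C$ of $\Mm$, axiom (S3) yields a signing $X \in \Co$ with $\supp(X) = C$; since $V \perp X$ and since $\pconv(\{\mu\}) = \{\mu\}$ does not contain $0$ for $\mu \in S^1$, the intersection $\supp(V) \cap C$ cannot be a singleton. Lemma~\ref{two_el} then provides a cocircuit contained in $\supp(V)$. Combined with the fact that every cocircuit of $\Mm$ is realized as $\supp(W)$ for some $W \in \DD \subseteq \Co^\perp$, and with the incomparability of cocircuits, this forces the supports of minimal elements of $\Co^\perp \setminus \{\zero\}$ to be exactly the cocircuits of $\Mm$.

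For (ii), suppose $V, W \in \Co^\perp$ share a cocircuit support $D$. Fix $e \in D$ and set $\alpha := V(e)/W(e) \in S^1$. For any other $f \in D$, I would pick a maximum independent subset $A$ of the hyperplane $E \setminus D$; then both $A \cup \{e\}$ and $A \cup \{f\}$ are bases of $\Mm$ and the unique circuit contained in $A \cup \{e, f\}$ meets $D$ in exactly $\{e, f\}$. Signing this circuit by some $X \in \Co$, the two-element orthogonalities $V \perp X$ and $W \perp X$ give $V(e)/V(f) = -X(e)/X(f) = W(e)/W(f)$, so $V(f) = \alpha W(f)$. Letting $f$ vary, $V = \alpha W$.

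Combining (i) and (ii) shows that $\DD'$ has the same supports as $\DD$ and the same single $S^1$-orbit over each one; hence $\DD' = \DD$. The main delicate point is the extraction of a full cocircuit inside $\supp(V)$ from orthogonality with all circuit signings in step (i); once packaged via Lemma~\ref{two_el}, both the identification of supports and the phase bookkeeping in (ii) are routine consequences of the two-element orthogonality condition.
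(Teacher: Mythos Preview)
Your proposal is correct and follows essentially the same route as the paper's proof: both arguments use the fact that $V\perp X$ forbids $|\supp(V)\cap\supp(X)|=1$ together with Lemma~\ref{two_el} to pin down the supports of minimal elements of $\Co^\perp\setminus\{\zero\}$ as the cocircuits of $\Mm$, and then use two-element orthogonality against the basic circuit through $e,f$ over a basis of the hyperplane $E\setminus D$ to show that the phases over any cocircuit support are determined up to a global $S^1$ factor. Your step (ii) is in fact slightly more explicit than the paper's, which simply refers back to the construction in Proposition~\ref{circ_perp}.
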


\begin{proof}
Recall $$\Co^\perp=\{W\in (S^1\cup\{0\})^E\mid W\perp X \textrm{ for all }X\in\Co\}.$$

For any collection of phased sets, $\mathcal{T}$, let $\lfloor\mathcal{T}\rfloor$ denote
 the elements of $\mathcal{T}^\perp\setminus\{\zero\}$ with minimal support.

By Proposition \ref{circ_perp}, we have $\Do\subset\Co^{\perp}$. Since $\supp(\DD):=\{\supp(X)\mid X\in \DD\}$ is the set of circuits of the underlying matroid, by \cite[Proposition 2.1.20]{Oxley} it can be written as $\supp(\DD)=\lfloor\So\rfloor$,
where
$$\So:=\{A\subseteq E\mid \vert A\cap \supp(X) \vert\neq 1 \forall X\in \Co\}.$$ 
Now, $\supp(\Co^{\perp})\subset \So$ (since $X\perp W$ forbids $\vert\supp(X)\cap\supp(W)\vert =1$), and so 

\noindent (1) $\DD\subseteq \lfloor\Co^\perp\rfloor$ because for every $W\in\Co^{\perp}$ there is $Y\in\DD$ with $\supp(Y)\subseteq\supp(W)$,

\noindent (2) $\DD\supseteq\lfloor\Co^\perp\rfloor$ because every $W\in\lfloor\Co^\perp\rfloor$ has the same support as some $Y_W\in\DD$, and one sees as in the proof of Proposition \ref{circ_perp} that for any $X\in S^1\cup\{0\}$ with $\supp(W)\in\supp(\DD)$ the condition $W\perp\Co$ determines the ratios $W(f)/W(e)$ uniquely for every pair $e,f\in\supp(W)$. Thus, $Y_W=W$.
\end{proof}

\section{Two counterexamples}

\subsection{Strong elimination}\label{ex:strongelim}

This section gives the example promised in Remark~\ref{obs:modular}, demonstrating that our phased circuit axioms 
{\em cannot} include a general Elimination Axiom analogous to that for oriented matroids (Axiom $\Com 2$ in Definition \ref{OMaxioms}). 

\begin{example}\label{ex:el:bad} Let $v_1,\ldots,v_7$ denote the columns of the following matrix:

\[ M:=
\left(\begin{array}{rrrrrrr}
1  & 0 & -1 & 0 & 0 & i& 1-i \\
2  & -1& 0  &-1 & 0 &-i& 3+i \\
-i & 0 & -i & 0 & 2i&-i& -2i \\
-1  & 0 & 0  &-i &i+1& 0& -2  \\
\end{array}\right)\]

The vectors $(1,1,1,1,1,0,0)$ and $(-1,0,0,1,1,1,1)$ are both elements of $\ker(M)$ of minimal support, giving rise to two phased circuits $X:=(1,1,1,1,1,0,0)$ and $Y:=(-1,0,0,1,1,1,1)$. Now, a ``general" elimination axiom should describe the phases of the circuit obtained by eliminating $v_1$ from $X$ and $Y$ in terms of the phases of $X$ and $Y$.  This circuit should have support contained in $\{v_2,\ldots,v_7\}$, and below we list all circuits with such support (up to multiplication by a scalar).

{\small
\begin{align*}
-(1+i)v_2+ iv_3+v_4+\bigg(\frac{1}{2}+\frac{i}{2}\bigg)v_5+v_6&=0, \\
(2+i)v_2 + (1-i)v_3+v_4+ \bigg(\frac{3}{2}-\frac{1}{2}\bigg)v_5+v_7&=0,\\
-\frac{5i}{2}v_2+\bigg(-1+\frac{1}{2}\bigg)v_3 +v_4+  \bigg(1+\frac{i}{2}\bigg)v_6 -\frac{i}{2}v_7&=0,
\end{align*}
\begin{align*}
 \bigg(\frac{1}{2}+\frac{5}{2}i\bigg)v_2+ \bigg(\frac{3}{2}-\frac{i}{2}\bigg)v_3 +v_5  - \bigg(\frac{1}{2}+\frac{i}{2}\bigg)v_6+ \bigg(\frac{1}{2}+\frac{i}{2}\bigg)v_7 &=0,\\
 \bigg(\frac{3}{5}-\frac{4}{5}i\bigg)v_2 +v_4 + \bigg(\frac{7}{10}-\frac{i}{10}\bigg)v_5 + \bigg(\frac{3}{5} +\frac{i}{5}\bigg)v_6 + \bigg(\frac{2}{5}-\frac{i}{5}\bigg)v_7&=0,\\
\bigg(\frac{7}{13}+\frac{4}{13}i\bigg)v_3 + v_4+  \bigg(\frac{25}{26}+\frac{5}{26}i\bigg)v_5 + \bigg(\frac{8}{13}-\frac{1}{13}i\bigg)v_6 + \bigg(\frac{5}{13}+\frac{i}{13}\bigg)v_7&=0.\\
\end{align*}}
But we could construct a matrix with, for instance, all real entries and with $(1,1,1,1,1,0,0)$ and $(-1,0,0,1,1,1,1)$ in its kernel, and thus with $X$ and $Y$ in the resulting phased circuit set. A general elimination axiom should give the same elimination of $v_1$ from $X$ and $Y$ in both of these complex matroids, but of course it will not.
\end{example}

\subsection{Vectors}\label{Sect:vectors}\label{sec:vectors}

In this section we show that there is no "phased vector axiomatization" for oriented matroids analogous to the $\Vom$ axioms in Definition~\ref{OMaxioms}. Such an axiomatization should 
\begin{itemize}
\item be cryptomorphic to the other axiomatizations, and
\item have the property that, for complex subspaces $W$ of $\mathbb C^n$, the complex matroid of $W$ has vector set $\{\ph(v):v\in W\}$.
\end{itemize}
We give here an example to show that the circuits of a complex matroid with realization $W$ do not determine $\{\ph(v):v\in W\}$. 

%Sadly, there is no "phased vector axiomatization" for complex matroids  that is cryptomorphic to the other axiomatizations and has the property that, for complex subspaces $W$ of $\mathbb C^n$, the complex matroid of $W$ has vector set $\{\ph(v):v\in W\}$. In this section we give an example to show that the circuits of a complex matroid with realization $W$ do not determine $\{\ph(v):v\in W\}$.

Let $W_1$ be the row space of
$$\left(\begin{array}{cccc}
1&1+i&1&0\\
1+i&3i&0&1
\end{array}\right)$$
and let $W_2$ be the row space of
$$\left(\begin{array}{cccc}
1&1+i&1&0\\
1+i&4i&0&1
\end{array}\right).$$
We shall verify that $W_1$ and $W_2$ have the same complex matroid, but that there is a $v\in W_1$ such that $\ph(v)\neq\ph(w)$ for every $w\in W_2$.

For each $W_i$, the underlying matroid is uniform, of rank 2, with 4 elements, so has 4 (unphased) circuits. Thus each complex matroid has circuit set consisting of four $S^1$ orbits. We can read two of the orbits for each $W_i$ directly from the presentation above: each of the two complex matroids has $\ph(1,1+i,1,0)$ and $\ph(1+i,3i,0,1)=
\ph(1+i,4i,0,1)$ as circuits. To see the remaining two orbits, we perform Gauss-Jordan elimination on the two matrices:
$$\left(\begin{array}{cccc}
1&1+i&1&0\\
1+i&3i&0&1
\end{array}\right)
\rightarrow
\left(\begin{array}{cccc}
1&0&3&-1+i\\
0&1&i-1&-i
\end{array}\right)
$$
and $$\left(\begin{array}{cccc}
1&1+i&1&0\\
1+i&4i&0&1
\end{array}\right)
\rightarrow
\left(\begin{array}{cccc}
1&0&2&\frac{1}{2}(-1+i)\\
0&1&\frac{1}{2}(i-1)&\frac{-i}{2}
\end{array}\right).$$
So, the two $W_i$ give the same complex matroid.

On the other hand, note that $(2+i,1+4i,1,1)\in W_1$. Assume by way of contradiction that some $w\in W_2$ has 
$\ph(w)=\ph(2+i,1+4i,1,1)$. Then $w=k(1,1+i,1,0)+l(1+i, 4i, 0, 1)$ for some $k$ and $l$. To have the correct signs on the last two components, $k$ and $l$ must both be positive real numbers. However, one easily checks that no such $k$ and $l$ give the correct sign on the first two components.

\section{Weak maps and strong maps}\label{weakstrong}\label{sec:weakstrong}

Intuitively, a weak map of matroids is the combinatorial analog to moving a subspace of a vector space $\mathbb K^n$ into more 
special position with respect to the coordinate hyperplanes. The same intuition motivates the definition of weak maps for oriented matroids, although the intuition is known to be somewhat problematic in this case: there are weak maps of realizable oriented matroids which do not arise from geometrically ``close'' realizations (cf. Proposition 2.4.7 in~\cite{BLSWZ}).

A strong map of (oriented) matroids is the combinatorial analog to taking a subspace of a vector space. In the case of oriented matroids, this analogy has a beautifully straightforward interpretation via the Topological Representation Theorem. The covectors of a rank $r$ oriented matroid $\Mom$ label the cells in a regular cell decomposition of $S^{r-1}$, and the covectors of any rank $k$ strong map image of $\Mom$ label the cells in the intersection of this cell complex with a $(k-1)$-dimensional ``pseudoequator''. For details of this, see \cite[Section 7.7]{BLSWZ}. Thus strong maps of oriented matroids have a straightforward definition in terms of covectors (and hence also in terms of vectors), but it is not so clear how to see strong maps directly in terms of circuits, cocircuits, or chirotopes. As far as we know there is no definition of strong maps of oriented matroids in terms of circuits, cocircuits, or chirotopes without involving composition somehow. From the perspective of the Topological Representation Theorem, such a definition seems unlikely: the cocircuits of an oriented matroid represent only the vertices in the cell decomposition of $S^{r-1}$, and without referring to composition it's not clear how to describe how arbitrary pseudoequators intersect the entire cell decomposition. For the same reasons, it seems unlikely that we can define strong maps of complex matroids without vector axioms.

On the other hand, this section will develop a notion of weak maps of complex matroids that behaves much like weak maps of oriented matroids.

Recall the partial order on $(S^1\cup\{0\})^E$: we order $S^1\cup\{0\}$ to have unique minimum $0$ and all other elements maximal, and then order $(S^1\cup\{0\})^E$ componentwise. Also recall \cite[Proposition 7.3.11]{Oxley} that for matroids $\Mm_1$ and $\Mm_2$ on the same ground set $E$, there is a weak map from $\Mm_1$ to $\Mm_2$ if and only if every circuit in $\Mm_1$ contains a circuit of $\Mm_2$.

\begin{defn} Let $\Mo_1$ and $\Mo_2$ be complex matroids on the set $E$ with circuit sets $\Co_1$ resp. $\Co_2$. We say there is a {\em weak map} from $\Mo_1$ to $\Mo_2$, and write $\Mo_1\leadsto \Mo_2$, if for every $X\in \Co_1$ there exists $Y\in \Co_2$ such that $X\geq Y$.
\end{defn}

\begin{prop} \label{underlying weak map} Let $\Mo_1$ and $\Mo_2$ be complex matroids with underlying matroids $M_1$ resp. $M_2$.
\begin{enumerate} 
\item If $\Mo_1\leadsto \Mo_2$ then $M_1\leadsto M_2$.
\item If $\Mo_1\leadsto \Mo_2$ then $\rank(\Mo_1)\geq \rank(\Mo_2)$.
\end{enumerate}
\end{prop}

\begin{proof} The first statement is clear from the definition of weak maps, and the second statement follows from the first.
\end{proof}

\begin{prop}\label{prop:equivweak} Let $\Mo_1$ and $\Mo_2$ be complex matroids of the same rank and on the same ground set. Let $\phr_1$ and $\phr_2$ be phirotopes for $\Mo_1$ and $\Mo_2$, and let $\phr_1$ and $\phr_2$ be their duals. The following are equivalent.
\begin{enumerate}
\item $\Mo_1\leadsto\Mo_2$.
\item For some $c\in S^1$, $\phr_1\geq c\phr_2$.
\item For some $c\in S^1$, $\phr_1^*\geq c\phr_2^*$.
\end{enumerate}
\end{prop}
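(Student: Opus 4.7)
The plan is to prove the chain $(1) \Rightarrow (2) \Rightarrow (1)$ together with the equivalence $(2) \Leftrightarrow (3)$, working with fixed phirotope representatives throughout. The key observation is that a componentwise inequality $\phr_1 \geq c\phr_2$ in $(S^1\cup\{0\})^{E^d}$ encodes exactly two facts: (a) $\Bm(\Mm_{\phr_2}) \subseteq \Bm(\Mm_{\phr_1})$, and (b) $\phr_1 = c\phr_2$ on every ordered basis of $\Mm_{\phr_2}$. The equivalence $(2) \Leftrightarrow (3)$ is then immediate from the defining formula of $\phr^*$: on each ordered complement $B^* = E \setminus B$ one has $\phr_i^*(B^*) = \phr_i(B)^{-1}\sgn(B^*, B)$, so conditions (a) and (b) for $(\phr_1, \phr_2)$ with constant $c$ translate verbatim into conditions (a) and (b) for $(\phr_1^*, \phr_2^*)$ with constant $c^{-1}$.

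For $(2) \Rightarrow (1)$, let $X \in \Co_1$ have support $C_1 \in \Cm(\Mm_{\phr_1})$. Same rank together with (a) yields $\ul{\Mo_1} \leadsto \ul{\Mo_2}$ at the matroid level, so $C_1$ contains some circuit $C_2 = \{e_0, e_1, \ldots, e_l\}$ of $\Mm_{\phr_2}$. Pick a basis $B = \{e_1, \ldots, e_d\}$ of $\Mm_{\phr_2}$ extending $C_2 \setminus \{e_0\}$; basic-circuit exchange within $C_2$ shows that $B^{(j)} := (B \setminus \{e_j\}) \cup \{e_0\}$ is again a basis of $\Mm_{\phr_2}$, hence of $\Mm_{\phr_1}$, for every $j \in \{1, \ldots, l\}$. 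The pivot rule (Definition~\ref{defn:complexpivotrule}) expresses $X(e_j)/X(e_0)$ using only the values of $\phr_1$ on $B$ and $B^{(j)}$, and the analogous ratio $Y(e_j)/Y(e_0)$ for the phased circuit $Y \in \Co_2$ supported on $C_2$ using the values of $\phr_2$ on the same bases. Since $\phr_1 = c\phr_2$ on each of them, the two ratios coincide; rescaling $Y$ by the appropriate element of $S^1$ then gives $Y = X|_{C_2}$, so $X \geq Y$.

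For $(1) \Rightarrow (2)$, Proposition~\ref{underlying weak map} already supplies (a). Fix a basis $B_0$ of $\Mm_{\phr_2}$ and set $c := \phr_1(B_0)/\phr_2(B_0) \in S^1$; we propagate the equality $\phr_1(B) = c\phr_2(B)$ along the connected basis graph $G$ of $\Mm_{\phr_2}$. An edge of $G$ joins bases $B_1 = A \cup \{e_1\}$, $B_2 = A \cup \{e_2\}$ that remain adjacent in the basis graph of $\Mm_{\phr_1}$. Let $C_i := C_{B_1, e_2}^{\Mm_{\phr_i}}$, choose $X \in \Co_1$ with $\supp(X) = C_1$, and let $Y \in \Co_2$ satisfy $X \geq Y$. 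Since $\supp(Y) \subseteq C_1 \subseteq B_1 \cup \{e_2\}$ is a circuit of $\Mm_{\phr_2}$, uniqueness of basic circuits (Lemma~\ref{basic_circ}) forces $\supp(Y) = C_2$, which contains both $e_1$ and $e_2$. Applying the pivot rule to both $X$ and $Y$ with respect to the basis $B_1$ yields
$$-\frac{\phr_1(B_1)}{\phr_1(B_2)} = \frac{X(e_2)}{X(e_1)} = \frac{Y(e_2)}{Y(e_1)} = -\frac{\phr_2(B_1)}{\phr_2(B_2)},$$
so $\phr_1(B)/\phr_2(B)$ is invariant along every edge of $G$ and hence equals $c$ throughout $\Bm(\Mm_{\phr_2})$.

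The main obstacle is the $(1) \Rightarrow (2)$ step: the weak map only provides \emph{some} $Y \in \Co_2$ dominated by a chosen $X \in \Co_1$, with no a priori control on $\supp(Y)$. The argument resolves this by invoking same-rank to force $\supp(Y)$ to be the unique basic circuit of $\Mm_{\phr_2}$ inside $B_1 \cup \{e_2\}$, which automatically contains both exchange elements $e_1$ and $e_2$ and thereby makes the pivot-rule comparison between $\phr_1$ and $\phr_2$ meaningful.
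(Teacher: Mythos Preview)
Your arguments for $(2)\Leftrightarrow(3)$ and for $(1)\Rightarrow(2)$ are correct and are essentially the paper's arguments: the paper also propagates along the basis graph of $\Mm_{\phr_2}$, using the weak-map hypothesis to pin down the basic-circuit signature and hence the phirotope ratio across each edge.

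Your $(2)\Rightarrow(1)$ argument, however, has a genuine gap. You pick a basis $B$ of $\Mm_{\phr_2}$ extending $C_2\setminus\{e_0\}$ and then assert that the pivot rule expresses $X(e_j)/X(e_0)$ in terms of $\phr_1$ on $B$ and $B^{(j)}$. But Definition~\ref{defn:complexpivotrule} requires that the tuple $x_2,\ldots,x_d$ contain \emph{all} of $\supp(X)\setminus\{e_0,e_j\}=C_1\setminus\{e_0,e_j\}$; your $B\setminus\{e_j\}$ is only guaranteed to contain $C_2\setminus\{e_0,e_j\}$. When $C_1\supsetneq C_2$ this can fail, and then the ratio $-\phr_1(e_0,B\setminus\{e_j\})/\phr_1(e_j,B\setminus\{e_j\})$ need not equal $X(e_j)/X(e_0)$ at all---it is the ratio for a \emph{different} circuit of $\Mm_{\phr_1}$, namely the basic circuit of $e_0$ with respect to $B$. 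Concretely, take $\Mm_{\phr_1}=U_{2,4}$ and $\Mm_{\phr_2}$ with $1,2,3$ parallel and $4$ a coloop: for $X$ supported on $C_1=\{1,2,3\}$ and $C_2=\{2,3\}$, your $B=\{3,4\}$ forces the pivot computation to run through element $4$, which is foreign to $\supp(X)$, and the resulting ratio is simply wrong.

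Nor can this be repaired by a smarter choice of $B$: one would need a basis of $\Mm_{\phr_2}$ containing $C_1\setminus\{e_0\}$, but $C_1\setminus\{e_0\}$ may be dependent in $\Mm_{\phr_2}$ for every choice of $e_0\in C_2$ (as in the example above). This is precisely why the paper does \emph{not} attempt a direct pivot-rule comparison for $(2)\Rightarrow(1)$, and instead argues by induction on $|E|$, stripping off loops and coloops and then restricting to $\supp(X)\cup B$ to reduce the ground set.
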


\begin{proof} The equivalence of the latter two statements is clear from Theorem~\ref{dualCOM}.  Let $\Mm_1$ and $\Mm_2$ denote the underlying matroids of $\Mo_1$ and $\Mo_2$, respectively.

If $\Mo_1\leadsto \Mo_2$ then by Lemma~\ref{underlying weak map}.1 we know that every basis of $\Mm_2$ is also a basis of $\Mm_1$. In particular, we have the following.
\begin{enumerate}
\item There exists $B_0$ an ordered basis of both $\Mm_1$ and $\Mm_2$. Without loss of generality assume $\phr_1(B_0)=\phr_2(B_0)$.
\item The basis graph of $\Mm_2$ is a subgraph of the basis graph of $\Mm_1$.
\end{enumerate}

\noindent For any ordered sequence $S$, let $\ul{S}$ denote the set of elements of $S$.

We will induct on distance from $\ul{B_0}$ in the basis graph of $\Mm_2$ to see that $\phr_1$ and $\phr_2$ coincide on every ordered basis $B$ of $\Mm_2$. If $\ul{B}\neq\ul{B_0}$, by basis exchange we can find $B_1$ a basis closer to $\ul{B_0}$ such that $\ul{B}=\{e, x_2, \ldots,  x_r\}$ and $B_1=\{f, x_2, \ldots,  x_r\}$ for some $e, f, x_2, \ldots,  x_r$. Then by Theorem~\ref{thm:elim}, any signature $X\in\Co_{\phr_1}$ on the basic circuit of $f$ with respect to $B$ satisfies 

\[\frac{X(e)}{X(f)}=-\frac{\phr_1(f, x_2, \ldots, x_r)}{\phr_1(e, x_2, \ldots, x_r)}=-\frac{\phr_2(f, x_2, \ldots, x_r)}{\phr_1(e, x_2, \ldots, x_r)}.\]

But $X\geq Y$ for some $Y\in\Co_{\phr_2}$, and $Y$ is a circuit signature in $\Mo_2$ on the basic circuit of $f$ with respect to $B$ in $\Mm_2$. So
\[
-\frac{\phr_2(f, x_2, \ldots, x_r)}{\phr_2(e, x_2, \ldots, x_r)}=\frac{Y(e)}{Y(f)}
=\frac{X(e)}{X(f)}
\]

\noindent and thus $\phr_2(f, x_2, \ldots, x_r)=\phr_1(f, x_2, \ldots, x_r)$.

Our proof that the second statement implies the first is adapted from \cite{BLSWZ} and is by induction on $|E|$.

Recall that a loop of a matroid is an element $e$ such that  $\{e\}$ is a circuit, and a coloop is an element  $e$ such that  $\{e\}$ is a cocircuit. Loops and coloops of complex matroids are loops or coloops of the underlying matroid. Write $\Co_1:=\Co_{\phr_1}$ and $\Co_2:=\Co_{\phr_2}$ for the sets of circuits of $\Mo_1$ and $\Mo_2$ respectively. First note:
\begin{itemize}
\item If $\Mo_1$ has a loop $e_0$, then $e_0$ is also a loop of $\Mo_2$, and the induction hypothesis tells us that $\Mo_1\setminus e_0 \leadsto\Mo_1\setminus e_0$, hence $\Mo_1\leadsto\Mo_2$.
\item If $\Mo_1$ has no loops but $\Mo_2$ has a coloop $e_0$, then $\phr_1/e_0\leadsto\phr_2/e_0$ and $\{e_0\}\not\in\CC_1$, so for every $X\in\CC_1$ there is a $Y\in\CC_2$ such that $X\setminus e_0\geq
 Y\setminus e_0
$. Since $e_0$ is a coloop, this implies $Y(e_0)=0$, so $X\geq Y$.
\end{itemize}

So consider the case when $\phr_1\geq\phr_2$ and $\Mo_2$ has no coloops. Let $X\in\CC_1$. Let $A$ be a maximal subset of $\supp(X)$ that's independent in $\Mo_2$, and extend $A$ to a basis $B$ of $\Mo_2$. Let $\tilde{\phr_1}$, $\tilde{\phr_2}$ be the restrictions of $\phr_1$ and $\phr_2$ to $(\supp(X)\cup B)^r$. Then $\tilde{\phr_1}\geq \tilde{\phr_2}$.

If $A:=E\setminus (\supp(X)\cup B)\neq \emptyset$ then, since $X\in\CC_1\setminus A$,
 the induction hypothesis tells us that there is a $Y\in\CC_2\setminus A \subseteq\CC_2$ such that $X\geq Y$.
 
 If $\supp(X)\cup B= E$, we can see that $B\subsetneq\supp(X)$. Otherwise, any
 $b\in B\setminus \supp(X)$ satisfies $\rank_{\Mo_2}(\supp(X)\cup(B\setminus b))<\rank_{\Mo_2}(\supp(X)\cup B)$. Thus $b$ is a coloop of $\Mo_2$, but $\Mo_2$ has no coloops. Thus $\supp(X)$ is a circuit of $\ul{\Mo_2}$. 

An easy induction on the rank shows that whenever $\Mm_1$ and $\Mm_2$ are matroids of the same rank such that every circuit of $\Mm_1$ is a circuit of $\Mm_2$, then $\Mm_1=\Mm_2$.

We conclude $\ul{\Mo_1}=\ul(\Mo_2)$, and so $\phr_1\geq\phr_2$ implies $\phr_1=\phr_2$. Thus $\Mo_1=\Mo_2$.
\end{proof}

As with realizable oriented matroids, weak maps of realizable complex matroids can arise from moving subspaces into more special position with respect to the coordinate hyperplanes. To make this precise, we give here the complex version of the same argument for oriented matroids (cf.~\cite{AnDa}). Consider the complex Grassmannian $G(r, {\mathbb C}^n)$, the topological space of all rank $r$ subspaces of ${\mathbb C}^n$. For any $W\in G(r, {\mathbb C}^n)$, let $\mu(W)$ be the corresponding rank $r$ complex matroid. Thus, if $W=\row(M)$, the function
 $\phr_M:[n]^r\to S^1\cup\{0\}$ taking
each $(e_1, \ldots, e_r)$ to the sign of the minor of $M$ with columns indexed by $(e_1, \ldots, e_r)$ is a phirotope for $\mu(W)$.

The following is our central result on the realizable interpretation of weak maps:
\begin{thm}\label{GC:thm} Let $\MM_1$ and $\MM_2$ be rank $r$ complex matroids on the ground set $[n]$. If $\overline{\mu^{-1}(\MM_1)}\cap\mu^{-1}(\MM_2)\neq \emptyset$ then $\MM_1\leadsto\MM_2$.
\end{thm}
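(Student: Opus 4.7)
The plan is to translate the closure hypothesis into a pointwise inequality of phirotopes and then invoke Proposition~\ref{prop:equivweak}. First I would pick $W \in \mu^{-1}(\MM_2) \cap \overline{\mu^{-1}(\MM_1)}$ and a sequence $W_k \to W$ in $\mu^{-1}(\MM_1)$. Fix any basis $B_0$ of $\MM_2$; since the vanishing of the $B_0$-th Pl\"ucker coordinate is a closed condition, $B_0$ is also a basis of every $W_k$ for $k$ large, so I may choose matrix representatives $M,M_k$ of $W,W_k$ that are standardized with respect to $B_0$ and satisfy $M_k \to M$ entrywise.

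For each ordered $r$-subset $B$ of $[n]$, write $p_B(N)$ for the corresponding maximal minor of a matrix $N$. Continuity of $\det$ gives $p_B(M_k) \to p_B(M)$. Since $M_k$ realizes $\MM_1$ and $M$ realizes $\MM_2$, the assignments $B \mapsto \ph(p_B(M_k))$ and $B \mapsto \ph(p_B(M))$ are phirotopes of $\MM_1$ and $\MM_2$ respectively, so they equal $c_k\,\phr_{\MM_1}$ and $c\,\phr_{\MM_2}$ for suitable $c_k,c \in S^1$. By compactness of $S^1$, after passing to a subsequence I may assume $c_k \to c_0 \in S^1$.

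The heart of the argument is then a case analysis at each $r$-subset $B$. If $p_B(M_k) = 0$ for all $k$ then $\phr_{\MM_1}(B)=0$, and the same holds in the limit so $\phr_{\MM_2}(B)=0$. If $p_B(M_k) \neq 0$ and $p_B(M) \neq 0$, continuity of $\ph$ on $\mathbb{C}\setminus\{0\}$ yields $c\,\phr_{\MM_2}(B) = c_0\,\phr_{\MM_1}(B)$. The remaining case $p_B(M_k) \neq 0$, $p_B(M) = 0$ gives $\phr_{\MM_2}(B) = 0$ while $\phr_{\MM_1}(B) \in S^1$. In every case the inequality $\phr_{\MM_1}(B) \geq (c/c_0)\,\phr_{\MM_2}(B)$ holds in the partial order on $S^1 \cup \{0\}$, so Proposition~\ref{prop:equivweak} delivers $\MM_1 \leadsto \MM_2$.

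The main obstacle is the discontinuity of $\ph$ at $0$: a minor may remain nonzero along the sequence yet vanish in the limit. This is precisely the third case above and is the source of the inequality rather than equality. Fortunately the partial order on $S^1 \cup \{0\}$ is tailored so that every nonzero phase dominates $0$, which is exactly what Proposition~\ref{prop:equivweak} needs in order to convert the weakened relation between phirotopes back into the desired weak map.
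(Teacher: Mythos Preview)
Your proof is correct and follows essentially the same approach as the paper's: both standardize matrix representatives with respect to a common basis (the paper does this by working in a chart $U_B$ of the Grassmannian), use continuity of the minors, and then compare phases coordinatewise before invoking Proposition~\ref{prop:equivweak}. One minor redundancy: once the $M_k$ are standardized with respect to $B_0$, evaluating at $B_0$ gives $c_k\,\phr_{\MM_1}(B_0)=1$, so $c_k=\phr_{\MM_1}(B_0)^{-1}$ is already constant and the compactness/subsequence step is unnecessary.
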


\begin{proof} For any $r$-subset $B$ of $[n]$, let $U_B\subset G(r, {\mathbb C}^n)$ be the set of all row spaces of $r\times n$ complex matrices such that the square submatrix with column set indexed by $B$ is the identity. Then $U_B\cong{\mathbb C}^{r\times(n-r)}$, and the set of all $U_B$ is an atlas on $G(r, {\mathbb C}^n)$. Thus $U_B\cap \overline{\mu^{-1}(\MM_1)}\cap\mu^{-1}(\MM_2)\neq \emptyset$ for some $B$. Without loss of generality assume $B=[r]$. Thus we can (and will) identify $U_B$ with the set of $r\times n$ matrices $M$ of the form $(I|M')$, where $I$ is the $r\times r$ identity matrix, and $M'$ is a $r\times (n-r)$ matrix.

Now consider the two maps
$$U_B\stackrel{d}{\longrightarrow}{\mathbb C}^{[n]^r}\stackrel{\ph}{\longrightarrow}(S^1\cup\{0\})^{[n]^r}$$
where $d(M)(e_1,\ldots, e_r)$ is the $(e_1,\ldots, e_r)$ minor of $M$ (that is, the determinant of the submatrix of $M$ with columns indexed by $(e_1,\ldots, e_r)$, in that order). The composition of these two maps takes each $W$ to the phirotope for $\mu(W)$ with value 1 on $(1, 2, \ldots, r)$.

The map $d$ is continuous, hence the hypothesis gives \[ \overline{d(\mu^{-1}(\MM_1))}\cap d(\mu^{-1}(\MM_2))\neq \emptyset.\] But for each $i$, $d(\mu^{-1}(\MM_i))\subseteq\ph^{-1}(\phr_{M_i})$, so  $\overline{\ph^{-1}(\phr_{\MM_1})}\cap\ph^{-1}(\phr_{\MM_2})\neq \emptyset$. In particular, for every $X\in [n]^r$, we have \[\overline{\ph^{-1}(\phr_{\MM_1}(X))}\cap\ph^{-1}(\phr_{\MM_2}(X))\neq \emptyset.\] Notice that, for every $c\in S^1\cup\{0\}$, $\ph^{-1}(c)={\mathbb R}_+c$. Thus, for any $c_1,c_2\in  S^1\cup\{0\}$, \begin{center}$\overline{\ph^{-1}(c_1)}\cap \ph^{-1}(c_2)\neq\emptyset$ {\em if and only if} $c_1\geq c_2$.\end{center}
So $\phr_{\Mo_1}\geq \phr_{\Mo_2}$, and by Proposition~\ref{prop:equivweak} this means $\Mo_1\leadsto \Mo_2$.
\end{proof}

\setcounter{section}{0}

\renewcommand \thesection{{\sc Appendix} %\Roman{section}
}
\section{Matroids and oriented matroids}\label{appendix:matroids}

\renewcommand \thesection{A}

We give a quick introduction to matroids and oriented matroids by
stating the relevant definitions and results needed in 
the main body of the paper. We will omit proofs that can be found in
the literature. 
For matroid theory we follow the notation of~\cite{Oxley} and recommend this text for a reference; a similar text for oriented matroids is~\cite{BLSWZ}. 
In particular, we will follow the notational convention of writing $x$ for the singleton set $\{x\}$ whenever this will not cause confusion. 

We present the philosophy of oriented matroids as ``matroids with extra
structure''. This makes our presentation mildly unorthodox but leads naturally to our approach to complex
matroids. 
\subsection{Matroids}

\subsubsection{Matroid axioms}\label{matroidaxioms}

We start by presenting some well-known axiomatizations of matroids.

\begin{defn}[See Chapter 1 of \cite{Oxley}]\label{def:mat}$\,$
\begin{list}{\labelitemi}{\leftmargin=1.5em}
\item[1.]  A family $\Bm\subseteq 2^E$ of subsets of $E$ is the set of {\em bases of a matroid} $\Mm$ if and only if $\Bm\neq \emptyset$ and
\begin{enumerate}
\item[($\Bm 1$)] given $B_1,B_2\in\BB$ and $e\in B_1\setminus B_2$, there is $f\in B_2\setminus B_1$ such that $(B_1\setminus e)\cup f \in\Bm$ {\em (the Basis Exchange Axiom)}.
\end{enumerate}

\item[2.] A family $\Vm\subseteq 2^E$ is the set of {\em vectors}  of a matroid on the ground set $E$ if and only if $E\in\Vm$ and
\begin{enumerate}
\item[($\Vm 1$)] if $X_1, X_2\in\VV$ then $X_1\cup X_2\in\Vm$
\item[($\Vm 2$)] if $X\in\Vm$ and $\{Y_1, \ldots, Y_k\}$ is the set of maximal elements of $\Vm$ properly contained in $X$, then the sets $X-Y_1, \ldots, X-Y_k$ partition $X$.
\end{enumerate}

\item[3.] \cite[Definition... ]{Oxley}   A family $\Cm\subset 2^E$ is the set of {\em circuits} of a matroid on the ground set $E$ if and only if $\emptyset\not\in\Cm$ and
\begin{enumerate}
\item[($\Cm 1$)] if $C_1,C_2\in\Cm$ and $C_1\subseteq C_2$, then $C_1=C_2$ {\em (Incomparability)}. 

\item[($\Cm 2$)] if $C_1,C_2\in\Cm$ are distinct and there is an element $e\in E$ with $e\in C_1\cap C_2$, then there is $C_3\in\Cm$ with $C_3\subseteq (C_1\cup C_2)\setminus e${\em (Elimination)}.
\end{enumerate}
\end{list}

\end{defn}

\noindent To briefly state the cryptomorphisms: 
\begin{itemize}
\item Given  $\Bm$ the set of bases of a matroid, we say $A\subseteq E$ is {\em dependent} if it is not contained in a basis. The set $\Cm$ of all minimal dependent sets is the set of circuits of a matroid.
\item Given $\Cm$ the set of circuits of a matroid, $\Vm$ is the set $U_\Sm$ of all unions of elements of $\Cm$ (including the empty union).
\item Given $\Vm$ the set of vectors of a matroid, we say that $A\subseteq E$ is a {\em basis} if $A$ is maximal among sets not containing a vector. The set $\Bm$ of all bases is the set of bases of a matroid.
\end{itemize}

In this paper we need the following strengthening of the circuit axioms, recently proved in \cite{Delu09}.

\begin{defn}\label{modgen}\label{def:mat:mod:circ} Let $\Sm$ be a collection of incomparable nonempty subsets of a ground set $E$. Consider the poset obtained by partially ordering $U_{\Sm}:=\{\bigcup\Km\mid \Km\subseteq \Sm\}$ by inclusion. This poset is an atomic lattice, so the {\em meet} $A\vee B$ is defined for every pair $A,B$ of its elements (see \cite[Chapter 3]{Stanley}). 

Two elements $A$, $B$ of $\Sm$ give a {\em modular pair} if the longest chain from the minimal element $\emptyset$ of $U$ to their meet $A\vee B$ has length $2$. 
\end{defn}

\begin{lemma}[\cite{Delu09}]\label{lemma:modelim}
A collection $\Cm$ of incomparable nonempty subsets of a ground set $E$ is the set of circuits of a matroid if and only if the Elimination property $(\Cm 2)$ of Definition~\ref{def:mat} holds for all modular pairs $C_1,C_2$ of elements of $\Cm$.
\end{lemma}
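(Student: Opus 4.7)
The forward implication is immediate from Definition~\ref{def:mat}.(3): if $\Cm$ is the circuit set of a matroid, then $(\Cm 2)$ holds unconditionally, and in particular for modular pairs.

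For the converse, suppose $\Cm$ is a family of incomparable nonempty subsets of $E$ satisfying $(\Cm 2)$ on all modular pairs. The plan is to bootstrap this to unrestricted elimination: for any $C_1, C_2\in\Cm$ sharing an element $e$, produce $C_3\in\Cm$ with $C_3\subseteq (C_1\cup C_2)\setminus e$. I would proceed by induction on the rank of the join $C_1\vee C_2$ in the atomic lattice $U = \{\bigcup\Km\mid \Km\subseteq \Cm\}$ of Definition~\ref{modgen}. The base case is rank $2$, which by Definition~\ref{modgen} is precisely the modular case, and so is covered by hypothesis.

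For the inductive step, assume $\rank(C_1\vee C_2)\geq 3$ in $U$. Then a longest chain from $\emptyset$ to $C_1\cup C_2$ must pass through a strict intermediate union of circuits, which in turn forces the existence of a third circuit $C_3\in\Cm$ with $C_3\subseteq C_1\cup C_2$ and $C_3\notin\{C_1,C_2\}$. If $e\notin C_3$, then $C_3\subseteq (C_1\cup C_2)\setminus e$ is itself the required witness. Otherwise $e\in C_3$, and I would apply the inductive hypothesis to the pair $(C_1,C_3)$ at the element $e$ to produce a circuit $C_4\subseteq (C_1\cup C_3)\setminus e\subseteq (C_1\cup C_2)\setminus e$, as desired.

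The principal obstacle is guaranteeing that the induction parameter strictly decreases: two distinct circuits $C_2,C_3\subseteq C_1\cup C_2$ may satisfy $C_1\cup C_3 = C_1\cup C_2$ on the nose, so the $U$-rank of the pair $(C_1,C_3)$ need not drop below that of $(C_1,C_2)$. To close this gap I would either refine the induction with a secondary invariant (such as $|C_1\cap C_2|$, or the length of a chosen covering chain in $[\emptyset,C_1\cup C_2]$), or select the auxiliary circuit $C_3$ directly from the atoms of a maximal chain below $C_1\vee C_2$ in $U$, arguing from the atomic structure of $U$ that at least one such choice provably lies strictly lower in the lattice. Once the monotonicity of the induction parameter is secured, the case split above finishes the argument.
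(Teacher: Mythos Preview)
The paper does not prove this lemma; it is quoted from the external reference~\cite{Delu09}, so there is no in-paper argument to compare against.

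On your proposal itself: the forward direction is fine, and the idea of inducting on how far the pair $(C_1,C_2)$ is from being modular is natural. But the gap you flag is real and is not closed by the patches you suggest. From the mere existence of a third circuit $C_3\subseteq C_1\cup C_2$ you cannot force any of the obvious parameters---the $U$-height of $C_1\vee C_3$, the cardinality $|C_1\cup C_3|$, or $|C_1\cap C_3|$---to drop, because at this stage $U$ is only an atomic lattice: you have no semimodularity, no rank function, and no exchange property to lean on. Your suggestion to pick $C_3$ ``from the atoms of a maximal chain below $C_1\vee C_2$'' does not add anything, since the atoms of $U$ below $C_1\vee C_2$ are precisely the circuits contained in $C_1\cup C_2$, i.e., the same pool of candidates you already had. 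A complete argument along these lines needs a genuinely additional idea---for instance, passing to the independence side and verifying the exchange axioms there, or setting up a more delicate double induction in which one tracks more than just the join. As written, the sketch is an outline whose central difficulty remains unresolved.
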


\subsubsection{Duality and minors}

\begin{defn}\label{def:mat:orth} For $\Sm\subseteq 2^E$, we define
$\Sm^\perp:=\{A\subseteq E\mid \forall B\in \Sm \ |A\cap B|\neq 1\}$.
\end{defn}
\begin{thm}(cf.~\cite{Oxley})\label{dual:mat} If $\Mm$ is a matroid with ground set $E$, basis set $\Bm$, vector set $\Vm$, and circuit set $\Cm$, then there is a matroid $\Mm^*$ with ground set $E$, basis set $\Bm^*:=\{E\setminus X \mid X\in \Bm\}$, vector set $\Vm^*:=V^\perp$, and circuit set $\Cm^*$ the set of minimal nonempty elements of $V^*$.

If $\Mm$ is realized by a matrix with row space $W$, then $\Mm^*$ is realized by a matrix with row space $W^\perp$.
\end{thm}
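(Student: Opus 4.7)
The plan is to establish $\Mm^*$ via the basis formulation and then derive the circuit and vector descriptions from the matroid cryptomorphisms. I begin by verifying that $\Bm^* := \{E \setminus B : B \in \Bm\}$ satisfies $(\Bm 1)$: given $B_1^*, B_2^* \in \Bm^*$ and $e \in B_1^* \setminus B_2^*$, setting $B_i := E \setminus B_i^*$ reduces the required exchange to a ``reverse'' form of $(\Bm 1)$ inside $\Bm$, namely finding $f \in B_1 \setminus B_2$ with $(B_2 \setminus e) \cup f \in \Bm$. This dual form is a classical consequence of $(\Bm 1)$, provable by induction on $|B_1 \triangle B_2|$ or by translating through the circuit formulation. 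Once $\Mm^*$ exists, its circuits (by the basis-to-circuit cryptomorphism) are the minimal sets $D \subseteq E$ such that $E \setminus D$ contains no basis of $\Mm$ --- equivalently, the minimal sets meeting every basis of $\Mm$, which are the classical cocircuits of $\Mm$.

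The substantive content is identifying $\Vm^* = \Vm^\perp$. Since vectors of $\Mm^*$ are unions of its circuits, I must show that the set of unions of cocircuits of $\Mm$ equals $\Vm^\perp$. One inclusion uses the classical orthogonality $|C \cap D| \neq 1$ for any circuit $C$ and cocircuit $D$ of $\Mm$ (a standard consequence of basis exchange): writing $V = \bigcup_i C_i$ and $V^* = \bigcup_j D_j$, if $V \cap V^*$ is nonempty then some $C_i \cap D_j$ is nonempty and hence of size at least $2$, so $|V \cap V^*| \neq 1$. For the reverse inclusion $\Vm^\perp \subseteq \Vm^*$, I would verify that $\Vm^\perp$ itself satisfies the vector axioms $(\Vm 1)$ and $(\Vm 2)$ and contains every cocircuit, forcing $\Vm^\perp$ to coincide with the union-closure of the cocircuits.

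For the realizability clause, if $W = \row(M)$, the vectors of $\Mm$ are $\{\supp(x) : x \in W^\perp\}$, so any matrix $M^*$ with $\row(M^*) = W^\perp$ has $\ker(M^*) = W$ and thus vector set $\{\supp(x) : x \in W\}$; by the identification above, this is $\Vm^*$. The main obstacle is verifying $(\Vm 2)$ for $\Vm^\perp$ in the reverse inclusion, since the partition-of-differences condition is delicate and requires analyzing how maximal proper sub-covectors of a covector decompose it. A cleaner route is to go through circuits first: take the minimal nonempty elements of $\Vm^\perp$ as the circuits of $\Mm^*$, verify $(\Cm 1)$ and $(\Cm 2)$ directly for them, and only then recover vectors as unions.
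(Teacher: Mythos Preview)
The paper does not prove this theorem at all: it is stated as background with the citation ``(cf.~\cite{Oxley})'' and no argument is given. So there is no ``paper's own proof'' to compare against; the statement is imported from the standard reference.

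Your outline is sound and is the standard route. A couple of remarks. First, for the reverse inclusion $\Vm^\perp \subseteq \Vm^*$, the cleanest argument avoids verifying $(\Vm 2)$ directly: if $A$ satisfies $|A \cap C| \neq 1$ for every circuit $C$, then $E \setminus A$ has the property that whenever $C$ is a circuit with $C \setminus \{e\} \subseteq E \setminus A$, also $e \in E \setminus A$; this is exactly the statement that $E \setminus A$ is closed under the matroid closure operator, i.e.\ a flat, so $A$ is a covector. (Note that it suffices to check $|A \cap C| \neq 1$ for circuits $C$, not all vectors, since if $|A \cap V| = 1$ with $V = \bigcup C_i$ then the unique element lies in some $C_i$ and $|A \cap C_i| = 1$.) Second, your reduction of $(\Bm 1)$ for $\Bm^*$ to the ``dual'' exchange in $\Bm$ is correct but you should not leave it as a bare assertion: the symmetric exchange property is genuinely a theorem (Brualdi), not immediate from $(\Bm 1)$ as stated, so either cite it or give the short inductive argument.
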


\begin{defn} The matroid $\Mm^*$ in the statement of the previous theorem is called the {\em dual} to $\Mm$. 
The sets $\Vm^*$ and $\Cm^*$ of the previous theorem are called the set of {\em covectors} resp. {\em cocircuits} of $\Mm$.
\end{defn}

% We will make frequent use of the following basic fact. It follows immediately from our definitions, but we state it here for later reference.

\begin{lemma}[Proposition 2.1.20 of \cite{Oxley}]\label{two_el}
Let $C$ be a circuit and $D$ be a cocircuit of a matroid $\Mm$. Then $\vert C\cap D\vert\neq 1$. In fact, the set
$$\min\{D\subseteq E \mid D\neq \emptyset,\,\vert D\cap C\vert \neq 1 \textrm{ for all } C\in\Cm\},$$
where $\min$ denotes inclusion-minimality, is the set of cocircuits of $\Mm$.
\end{lemma}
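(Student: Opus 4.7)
The plan is to first establish the intersection inequality, and then derive the characterization of cocircuits as the minimal nonempty sets with this intersection property, by identifying the latter family with $\Vm^\perp$.

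First I would observe that every circuit is itself a vector. This is immediate from the cryptomorphism recalled just above the lemma, which describes $\Vm$ as the family of all unions of elements of $\Cm$; a single circuit is such a union, so $\Cm \subseteq \Vm$. Since $D$ is a cocircuit, i.e.\ a minimal nonempty element of $\Vm^{*}=\Vm^{\perp}$ by Theorem~\ref{dual:mat}, Definition~\ref{def:mat:orth} directly yields $|D\cap C|\neq 1$ for every circuit $C$, settling the first claim.

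The second claim follows once we upgrade this local property to arbitrary vectors. The goal is the set equality
$$\{D\subseteq E \mid |D\cap C|\neq 1 \text{ for all } C\in\Cm\} \;=\; \Vm^{\perp}.$$
The inclusion $\supseteq$ is free from $\Cm\subseteq \Vm$. For the reverse inclusion, assume $|D\cap C|\neq 1$ for all circuits $C$, and suppose for contradiction that some $V\in\Vm$ satisfies $D\cap V=\{x\}$. Writing $V$ as a union of circuits $V=\bigcup_i C_i$, there is some $C_i\ni x$, so $D\cap C_i$ is nonempty. By hypothesis $|D\cap C_i|\geq 2$, yielding $y\in (D\cap C_i)\setminus\{x\}$; but then $y\in C_i\subseteq V$ gives $y\in D\cap V=\{x\}$, a contradiction.

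Taking inclusion-minimal nonempty elements on both sides of this equality, and recalling that by Theorem~\ref{dual:mat} the minimal nonempty elements of $\Vm^{\perp}$ are precisely the cocircuits $\Cm^{*}$, the characterization in the lemma follows. The only delicate point in the whole argument is the upgrade step above, where the description of vectors as unions of circuits plays an essential role; the remainder is short bookkeeping on top of the definition of matroid duality.
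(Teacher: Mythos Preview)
Your proof is correct. The paper does not supply its own proof of this lemma: it simply remarks that the statement ``follows immediately from our definitions'' and cites Oxley. Your argument is precisely the natural unpacking of that remark using the paper's own framework---Definition~\ref{def:mat:orth}, Theorem~\ref{dual:mat}, and the cryptomorphism describing vectors as unions of circuits---so there is nothing to compare against beyond confirming that your route is the intended one.
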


\begin{defn}[Section 3.1 of \cite{Oxley}]\label{def:mat:minor:bases}
Let $\Mm$ be a matroid on the ground set $E$ with set of bases $\Bm$, and let $A\subseteq E$. Choose $\{a_{1},\ldots,a_l\}$ a maximal independent set in $A$. We define 
\begin{itemize}
\item[(1)] the {\em contraction} $\Mm / A$ as the matroid given by the set of bases $$\Bm(\Mm /A):=\{B\subset E\mid B\cup\{a_1,\ldots,a_l\}\in\Bm\}$$
\item[(2)] the {\em deletion} $\Mm\setminus A$ as the matroid with set of bases $$\Bm(\Mm \setminus A):=\max\{B\setminus A\mid B\in\Bm\},$$
where $\max$ denotes inclusion-maximality.
\end{itemize}
\end{defn}

For any $A\subseteq E$, we let $\Mm(A)$ denote $\Mm\backslash(E\backslash A)$. The matroids $\Mm/A$, $\Mm\setminus A$, $\Mm(A)$ are called {\em minors} of $\Mm$. In fact, in the representable case they encode data related to the minors of the original matrix.

\begin{lemma}[Section 3.1 of \cite{Oxley}]\label{def:mat:minor:circ}
The contraction and deletion of a matroid $\Mm$ on the ground set $E$ can also be defined by means of their set of circuits:
$$\Cm(\Mm\setminus A) = \{C\in\Cm(\Mm)\mid C\cap A =\emptyset\},$$
$$\Cm(\Mm / A)=\min\{C\setminus A \mid C\in\Cm(\Mm),\,C\not\subseteq A\}.$$
Moreover, the operations of contraction and deletion are dual to each other in the sense that
$$(\Mm / A)^*=\Mm^*\setminus A.$$
\end{lemma}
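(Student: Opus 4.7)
The three assertions split naturally: the circuit description of $\Mm\setminus A$, the circuit description of $\Mm/A$, and the duality identity $(\Mm/A)^* = \Mm^*\setminus A$. My plan is to prove the deletion formula first (it is essentially immediate), then establish the contraction formula by a rank/augmentation argument, and finally verify the duality by comparing basis collections. None of the three parts requires a delicate combinatorial construction; the only nontrivial point is identifying the inclusion-minimal elements of $\{B\setminus A:B\in\Bm\}$.

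\textbf{Deletion.} I first show that the independent sets of $\Mm\setminus A$ are exactly the independent sets of $\Mm$ contained in $E\setminus A$. By Definition~\ref{def:mat:minor:bases}(2), the bases of $\Mm\setminus A$ are the inclusion-maximal elements of $\{B\setminus A:B\in\Bm\}$, which, by a standard basis-exchange argument, coincide with the maximal independent subsets of $E\setminus A$ in $\Mm$. Passing to minimal dependent sets then yields $\Cm(\Mm\setminus A)=\{C\in\Cm(\Mm):C\cap A=\emptyset\}$.

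\textbf{Contraction.} Let $I=\{a_1,\dots,a_l\}$, which is maximal independent in $A$, so $A\subseteq\operatorname{cl}(I)$ and $\mathrm{rk}(D\cup A)=\mathrm{rk}(D\cup I)$ for any $D\subseteq E$. Taking $D\subseteq E\setminus A$, the usual rank characterization gives that $D$ is dependent in $\Mm/A$ iff $D\cup I$ is dependent in $\Mm$, i.e.\ iff $D\cup I$ contains some $C\in\Cm(\Mm)$. Independence of $I$ forces $C\not\subseteq I$, and in fact $C\not\subseteq A$ (otherwise $C\subseteq (D\cup I)\cap A=I$, contradiction), while $C\cap D = C\setminus I\supseteq C\setminus A$. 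Taking minimal dependent sets gives $\Cm(\Mm/A)=\min\{C\setminus A:C\in\Cm(\Mm),\,C\not\subseteq A\}$.

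\textbf{Duality.} I compare the basis collections of $(\Mm/A)^*$ and $\Mm^*\setminus A$ on the ground set $E\setminus A$. By Theorem~\ref{dual:mat} and Definition~\ref{def:mat:minor:bases}(2),
\[
\Bm(\Mm^*\setminus A)=\max\bigl\{(E\setminus B)\setminus A:B\in\Bm\bigr\}
=\bigl\{(E\setminus A)\setminus X:X\in\min\{B\setminus A:B\in\Bm\}\bigr\},
\]
while, complementing Definition~\ref{def:mat:minor:bases}(1),
\[
\Bm((\Mm/A)^*)=\bigl\{(E\setminus A)\setminus B':B'\cup I\in\Bm,\ B'\subseteq E\setminus A\bigr\}
=\bigl\{(E\setminus A)\setminus X:X\in\{B\setminus A:I\subseteq B\in\Bm\}\bigr\}.
\]
So the duality reduces to the set identity
\[
\min\{B\setminus A:B\in\Bm\}=\{B\setminus A:B\in\Bm,\,I\subseteq B\}.
\]
The inclusion ``$\supseteq$'' is immediate from maximality of $I$ in $A$: if $I\subseteq B\in\Bm$ then $B\cap A=I$, so $|B\setminus A|=r-l$ is the minimum possible cardinality and thus $B\setminus A$ is inclusion-minimal. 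For ``$\subseteq$'', given $B\in\Bm$ minimizing $B\setminus A$, the set $B\cap A$ is a maximal independent subset of $A$; submodularity together with $A\subseteq\operatorname{cl}(I)$ gives $\mathrm{rk}(I\cup(B\setminus A))=\mathrm{rk}(I\cup(B\setminus A)\cup (B\cap A))=r$, so $B':=I\cup(B\setminus A)\in\Bm$ satisfies $I\subseteq B'$ and $B'\setminus A=B\setminus A$. This completes the proof.

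The only step with any subtlety is the set identity above; everything else is bookkeeping with standard matroid rank arguments.
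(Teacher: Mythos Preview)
The paper does not prove this lemma at all: it is quoted as a standard background fact from Oxley, Section~3.1, and no argument is given. So there is nothing in the paper to compare your approach against.

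Your proof is essentially correct and follows the usual textbook route. Two small points are worth tightening. In the contraction step, you characterize ``$D$ dependent in $\Mm/A$'' as ``$D\cup I$ contains a circuit $C$ of $\Mm$'' and then observe that such a $C$ satisfies $C\setminus A\subseteq D$; to pass to the conclusion $\Cm(\Mm/A)=\min\{C\setminus A:C\in\Cm(\Mm),\,C\not\subseteq A\}$ you also need the converse, namely that each $C\setminus A$ is itself dependent in $\Mm/A$. This follows because $(C\setminus A)\cup I$ is dependent (pick $e\in C\setminus A$ and note $C\setminus e\subseteq\cl\big(((C\setminus A)\cup I)\setminus e\big)$ since $C\cap A\subseteq\cl(I)$), but you should say so. In the duality step, the assertion ``$B\cap A$ is a maximal independent subset of $A$'' whenever $B\setminus A$ is inclusion-minimal is true but not immediate from minimality alone; a one-line basis-exchange argument (swap some $a\in I\setminus B$ into $B$, displacing an element of $B\setminus A$) supplies the missing justification.
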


\subsubsection{Rank, closure, flats}

%\subsubsection{Rank}

It is easy to check from the definition that all bases of a matroid have the same size (\cite[Lemma 1.2.1]{Oxley}). Thus we can define the {\em rank} of a matrix to be the size of any basis.
 
\begin{defn}[Rank] Let $\Mm$ be a matroid on the ground set $E$ with set of bases $\Bm$, and let $A\subseteq E$. Define the {\em rank} of $A$ to be 
\[
\rank(A):=\max\{\vert A\cap B\vert\mid B\in \Bm\}.
\]
\end{defn}
\noindent Thus $\rank(\Mm)=\rank(E)$.

The notion of rank defines a closure operator on $E$:

\begin{defn}[Closure]
Let $\Mm$ be a matroid on the ground set $E$. Given $A\subset E$ define
\[
\cl(A):=\max \{A'\subseteq E\mid A\subseteq A', \rank(A)=\rank(A')\}.
\]
The function $\cl:E\to E $ is the {\em closure operator} of $M$.
\end{defn}

%\subsubsection{Flats} 
A {\em flat} of a matroid $\Mm$ is the complement of a vector of $\Mm$.  It is easy to see that flats can be defined as the subsets $A\subseteq E$ such that $\cl(A)=A$.

For a matroid $M$, we are interested in several posets, each ordered by inclusion:
\begin{itemize}
\item the poset of flats,
\item the poset of vectors ($U_\Cm$), and
\item the poset of covectors.
\end{itemize}
Each of these is a ranked lattice, with meet given by intersection and join given by union \cite{Oxley}. 

%An important property of the set of flats of a matroid is that it is closed under intersection and union. Indeed, the poset of flats,  partially ordered by inclusion, is a geometric lattice, with meet given by intersection and join given by union. (This gives rise to yet another cryptomorphic characterization of matroids \cite[Theorem 1.7.5]{Oxley}). It is easy to see that flats can be defined as the subsets $A\subseteq E$ such that $\cl(A)=A$.
%$\rank(A\cup e)>\rank(A)$ whenever $e\in E\setminus A$. 

\subsubsection{Modularity of circuit pairs}

The maximal proper flats of a matroid $\Mm$ are called {\em hyperplanes}.   Thus, in a matroid of rank $r$, all hyperplanes have rank $r-1$. One can check that the cocircuits of $\Mm$ are exactly the complements of hyperplanes of $\Mm$. 

\begin{defn}\label{modmat} Two elements $A$, $B$ of a ranked lattice $L$ are a  {\em modular pair} if \begin{center} $\rank(A)+\rank(B)=\rank(A\wedge B)+\rank(A\vee B)$.
\end{center}
\end{defn}

\begin{remark}\label{pofferbacco}
The above definition extends Definition \ref{modgen}.
In particular, we note
that two hyperplanes are a modular pair of flats if and only if their complements are a modular pair of cocircuits.
\end{remark}

\begin{lemma}\label{trans_mod}
Let $\Mm$ be a matroid on the ground set $E$, and let $e\in E$ be a nonloop. Then
\begin{itemize}
\item[(1)] if $C_1,C_2$ is a modular pair of circuits of $\Mm\setminus e$ then it is a modular pair of circuits of $\Mm$,
\item[(2)] if $C_1,C_2$ is a modular pair of circuits of $\Mm/e$ then
  the (unique) pair $C'_1$, $C'_2$ of circuits of $\Mm$ with $C'_1\subseteq C_1\cup e, C_2'\subseteq C_2\cup e$
   is modular. %a modular pair of circuits of $\Mm$.
\end{itemize}
\end{lemma}

\def\rd{r^*}
\def\rdc{\rd_{/e}}
\def\rdd{\rd_{\setminus e}}
\begin{proof} Claim (2) is proved as Lemma 2.3 in \cite{LasV}. We give here for completeness a proof of both claims. 

In view of Definition~\ref{def:mat:mod:circ} we show the equivalent statements about the dual $\Mm^*$. In what follows, $\rd$, $\rdd$, $\rdc$ are the rank functions of $\Mm^*$, $\Mm^*\setminus e$, $\Mm^*/e$ respectively.

\noindent (1)  Let $H_1,H_2$ be a modular pair of hyperplanes of $\Mm^*/e$. Then for $i=1,2$, $H_i':=H_i\cup e$ is a hyperplane of $\Mm^*$,
$$\rdc(H_1\cap H_2)=\rd((H_1\cap H_2) \cup e) - \rd(e)=\rd(H'_1\cap H_2')-\rd(e)$$ 
and since by assumption $\rdc(H_1\cap H_2)=\rdc(E\setminus e) -2 =\rd(E)-\rd(e)-2$, we have $\rd(H_1'\cap H_2')=\rd(E)-2$. So $H_1,H_2$ is a modular pair.

\noindent (2) Let $H_1,H_2$ be a modular pair of hyperplanes of $\Mm^*\setminus e$. For $i=1,2$ let $H_i'\subset H_i\cup e$ denote the hyperplane of $\Mm^*$ containing $H_i$. If $\rdd(E\setminus e)= \rd(E)$, then 
$$\rdd(E\setminus e) - 2=\rdd(H_1\cap H_2) =\rd(H_1\cap H_2) \leq \rd(H_1'\cap H_2') \leq \rd(E) -2$$

\noindent and $H_1', H_2'$ are a modular pair. If however $\rdd(E\setminus e) <\rd (E)$, then $e$ is in every basis of $\Mm^*$, and $e\in H_1'\cap H'_2$. Then 

$$\rd(E)-3=\rdd(E\setminus e)-2=\rd(H_1\cap H_2)=\rd(H_1'\cap H_2')-1$$
and $H_1',H_2'$ are a modular pair in $\Mm^*$.
\end{proof}

\subsubsection{The basis graph} We introduce a key tool in the proof of the cryptomorphism between the axioms for dual pairs and the phirotope axioms in Section~\ref{pairs>phiro}. 

\begin{lemma}[\cite{Oxley}, Corollary 1.2.6]\label{basic_circ}
If $B$ is a basis of a matroid $\Mm$ on the ground set $E$ and $e\in E\setminus B$ then there is a unique circuit $X\subseteq B\cup\{e\}$, called the {\em basic circuit} of $e$ with respect to $B$ and denoted by $C(B,e)$. In particular, for any pair of bases of the form $B_1=A\cup e_1, B_2=A\cup e_2$ there is a unique circuit supported on $B_1\cup B_2$.
\end{lemma}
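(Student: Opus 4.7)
The plan is to prove the first claim via the Circuit Elimination Axiom $(\Cm 2)$, and then derive the second claim as an immediate specialization.

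For existence in the first claim, I would start by noting that $B\cup\{e\}$ properly contains the basis $B$, hence is dependent, hence contains some circuit $X$. By the cryptomorphism between circuits and bases (circuits are minimal dependent sets), such an $X$ exists in $B\cup\{e\}$. Moreover, $X$ must contain $e$, for otherwise $X\subseteq B$ would exhibit a dependent subset of the independent set $B$, which is impossible.

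For uniqueness, I would argue by contradiction: suppose $X_1,X_2\subseteq B\cup\{e\}$ were two distinct circuits. By the previous paragraph, $e\in X_1\cap X_2$. The Circuit Elimination Axiom $(\Cm 2)$ then yields a circuit $X_3\subseteq (X_1\cup X_2)\setminus\{e\}\subseteq B$, again contradicting the independence of $B$. Thus exactly one circuit is contained in $B\cup\{e\}$, and this is by definition $C(B,e)$.

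For the second statement, given $B_1=A\cup e_1$ and $B_2=A\cup e_2$, I would simply apply the first part to the basis $B_1$ and the element $e_2\in E\setminus B_1$, noting that
\[
B_1\cup\{e_2\}\;=\;A\cup\{e_1,e_2\}\;=\;B_1\cup B_2.
\]
The unique circuit so produced is $C(B_1,e_2)$, which is then the unique circuit supported on $B_1\cup B_2$. There is no real obstacle here; the only point to be careful about is verifying that any candidate circuit in $B\cup\{e\}$ is forced to contain $e$, which is what allows the elimination step to actually remove $e$ and drive the circuit into $B$.
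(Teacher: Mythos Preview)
Your argument is correct and is the standard textbook proof. The paper itself does not supply a proof of this lemma at all: it simply states the result with a citation to Oxley's book (Corollary~1.2.6), so there is nothing to compare against beyond noting that your proof is essentially the one found in that reference.
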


\begin{defn} (\cite{Maurer}) \label{def:basisgraph}
 The {\em basis graph} of a matroid $\Mm$ with set of bases $\Bm$ is the simple graph with vertex set \[V(G):=\Bm\] and edge set \[E(G):=\{\{B_1,B_2\} \mid B_1=A\cup e_1, B_2=A\cup e_2 \mbox{ for some $e_1\neq e_2\in B_2\setminus A$} \}.\] 
\end{defn}

Thus the edge between two vertices $B_1=A\cup e_1$, $B_2=A\cup e_2$ can be associated with the circuit $C(B_1,e_2)=C(B_2,e_1)$. 

Maurer gave a thorough treatment of these graphs, giving for instance a complete characterization of which graphs are basis graphs of a matroid. For this paper we will only need the following Theorem~\ref{theo:maurer}. 

A sequence of edges $e_1,\ldots ,e_k$ in a graph $G$ is a {\em path from the vertex $A$ to the vertex $B$} if  for all $j\in\{1,\ldots , k-1\}$, $e_j$ and $e_{j+1}$ share a vertex  and if $A$ (resp.\ $B$) is the vertex of $e_1$ (resp.\ $e_k$) that is not shared with $e_2$ (resp.\ $e_{k-1}$). We say that an {\em elementary move} on the given path is the substitution of any subpath $e_je_{j+1}$ with another path consisting of at most two edges of $G$, and such that the replacement yields again a path. The {\em trivial path} is the path corresponding to an empty sequence of edges.

\begin{thm}[\cite{Maurer}]\label{theo:maurer}
Let $G$ be the basis graph of a matroid $\Mm$ and choose a vertex $A$ of $G$. Then every closed path in $G$ from $A$ to $A$ can be reduced to the trivial path by a sequence of elementary moves and of inverses thereof.
\end{thm}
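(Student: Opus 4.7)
The plan is to induct on the length $k$ of the closed path. Let $P=e_1e_2\cdots e_k$ be a closed path based at $A$. If $k\leq 2$, then in order to return to $A$ the path must traverse the same edge twice; a single elementary move collapses such a length-$2$ subpath to the trivial path, settling the base case.

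For the inductive step, the goal is to produce, via a bounded number of elementary moves, a new closed path in which some edge appears twice consecutively, so that a further elementary move decreases the length. I would do this by a local analysis of any three consecutive edges $e_je_{j+1}e_{j+2}$, corresponding to a walk $B^0\to B^1\to B^2\to B^3$ of four bases. Since each step is a single-element swap, $|B^0\triangle B^3|\leq 6$, and by iterated application of the basis exchange axiom the combinatorics of the transitions takes place inside a minor of $\Mm$ of rank at most $3$. A case analysis on $|B^0\triangle B^3|$ then yields the desired elementary moves: when $B^0=B^3$, basis exchange exhibits a triangle relation that closes up the loop; when $|B^0\triangle B^3|=4$, an alternative length-$2$ path from $B^0$ to $B^3$ gives a $4$-cycle that is itself an elementary-move relation; when $|B^0\triangle B^3|=6$, an analogous pair of length-$3$ paths is found inside the spanning rank-$3$ minor.

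The main obstacle is the last of these cases: when three exchanges involve $6$ distinct elements, one has to verify that the rank-$3$ minor they span is ``tileable'' by the triangles and quadrilaterals underlying the elementary moves. This reduces to a finite combinatorial check on rank-$3$ matroid structures and is the technical heart of Maurer's original argument. Once the rank-$3$ local picture is in hand, a cleaner companion induction on the rank of $\Mm$ rounds out the proof: for a chosen element $e\in E$, the bases containing $e$ form the basis graph of the contraction $\Mm/e$, and one uses the rank-$3$ local analysis to show that any closed path is elementary-move-equivalent to a concatenation of closed paths each lying entirely in the basis graph of either $\Mm/e$ or $\Mm\setminus e$, at which point the inductive hypothesis applies.
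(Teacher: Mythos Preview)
The paper does not prove this theorem; it is quoted from Maurer's work and used as a black box in Section~\ref{pairs>phiro}. So there is no ``paper's own proof'' to compare against.

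As for the proposal itself: you have correctly identified the essential content of Maurer's argument, namely that the triangles and squares of the basis graph already kill the fundamental group, and that the technical core is a local analysis in rank at most~$3$. However, the specific inductive scheme you outline has a gap. Your plan for the inductive step is to examine three consecutive edges $B^0\to B^1\to B^2\to B^3$ and, by elementary moves, produce a consecutive repeated edge so that the length drops. But in the case $|B^0\triangle B^3|=4$ your move replaces one length-$2$ subpath by another length-$2$ subpath through a different intermediate basis; this neither shortens the path nor creates a backtrack, and you give no mechanism by which iterating such square moves eventually produces a cancellation. The same objection applies, more severely, when $|B^0\triangle B^3|=6$. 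A bare induction on path length does not close: Maurer's actual argument uses a more refined induction (on a lexicographic invariant involving distance from a fixed base vertex, not just total length), and it is precisely this refinement that guarantees termination. Your final paragraph gestures at a rank induction via $\Mm/e$ and $\Mm\setminus e$, which is a viable alternative strategy, but the claim that any closed path is elementary-move-equivalent to a concatenation of paths lying entirely in one of these two sub-basis-graphs is itself the hard part and is not justified.
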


\subsection{Oriented matroids}
\subsubsection{Signs}

\begin{defn}\label{OM:sign:order} Given a finite ground set $E$, a {\em sign vector} (or {\em signed set}) is any $$X\in (S^0\cup\{0\})^E$$
where $S^0=\{+1, -1\}$ is the unit sphere in $\mathbb R$. We will denote by $X(e)$ the $e$-th component of $X$. The signed set with value $0$ for all components will be denoted by $\zero$.

We order $S^0\cup\{0\}$ according to the following Hasse diagram.
\begin{center}
\includegraphics[scale=0.4]{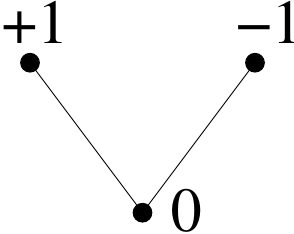}
\end{center}

The {\em sign} $\sign(x)$ of $x\in\mathbb R$ is defined to be 0 if $x=0$ and $\frac{x}{|x|}$ otherwise. The sign $\sign(v)$ of $v\in {\mathbb R}^E$ is defined to be the sign vector with $e$-th component $\sign(v_e)$. 
\end{defn}

\begin{defn} If $X,Y\in (S^0\cup\{0\})^E$, the {\em composition} $X\circ Y\in (S^0\cup\{0\})^E$ is defined as follows: for every $e\in E$, 
$$X\circ Y(e):=\left\{\begin{array}{ll} 
X(e)&\mbox{ if $X(e)\neq 0$}\\ 
Y(e)&\mbox{ otherwise.}\end{array}\right.$$
\end{defn}

We define the {\em convex hull} of a subset $P$ of $S^0\cup\{0\}$ to be the set of all signs of positive linear combinations of the elements of $P$. Thus
\begin{itemize}
\item $\conv(\emptyset)=\emptyset$
\item $\conv(\{0\})=\{0\}$
\item $\conv(\{0,\epsilon\})=\epsilon$ for $\epsilon\in S^0$
\item $\conv(P)=(S^0\cup\{0\})$ if $S^0\subseteq P$.
\end{itemize}

\subsubsection{Oriented matroid axioms}%\label{OMaxioms}
\begin{defn}\label{OMaxioms}$\,$
\begin{list}{\labelitemi}{\leftmargin=1.5em}
\item[1.] A function $E^d\to S^0\cup\{0\}$ is called a {\em rank d chirotope} of an oriented matroid $M$ if
\begin{itemize}
\item [($\chi\,$1)] $\chi$ is nonzero
\item[($\chi\,$2)] $\chi$ is alternating
\item[($\chi\,$3)] For any two subsets $x_1,\ldots,x_{d+1}$ and
  $y_1,\ldots , y_{d-1}$ of $E$, $0$ is contained in the convex hull
  of the numbers $$(-1)^k\chi(x_1,x_2,\ldots,\widehat{x_k},\ldots,
  x_{d+1})\chi(x_k,y_1,\ldots,y_{d-1}) $$ {\em (Combinatorial Grassmann-Pl\"ucker relations)}.

\end{itemize}

\item[2.] A family $\Vom\subseteq (S^0\cup\{0\})^E$ of signed sets is the set of {\em signed vectors} of an  oriented matroid $\Mom$ if
\begin{itemize}
\item[($\Vom 0$)] $\zero\in\Vom^*$,
\item[($\Vom 1$)] $\Vom=-\Vom$ {\em (Symmetry)},
\item[($\Vom 2$)] if $X,Y\in\Vom^*$ then $X\circ Y\in\Vom^*$ {\em (Composition)},
\item[($\Vom 3$)] for every $X,Y\in\Vom$ and $e\in E$  with $X(e)=-Y(e)$ there is some $Z\in\Vom$ with 
  \begin{itemize}
    \item[$\bullet$] $Z(f)\in\conv(\{X(f), Y(f)\})$ for all $f$, and 
    \item[$\bullet$] $Z(e)=0$
   \end{itemize} {\em (Vector Elimination)}.
\end{itemize}

\item[3.] A family $\Com\subseteq (S^0\cup\{0\})^E\setminus \{\zero\}$ of signed sets is the set of {\em signed circuits} of an  oriented matroid $\Mom$ if
\begin{itemize}
%\item[($\Com 0$)] $\zero\not\in\Com$
\item[($\Com 0$)] $\Com=-\Com$ {\em (Symmetry)},
\item[($\Com 1$)] if $X,Y\in\Com$ and $\supp(X)\subseteq\supp(Y)$ then $X=\pm Y$ {\em (Incomparability)},
\item[($\Com 2$)] for every $X,Y\in\Com$  and $e,f\in E$ with $X(e)=-Y(e)$ and $X(f)\neq-Y(f)$, there is some $Z\in\Com$ with 
\begin{itemize}
    \item[$\bullet$] $Z(g)\leq\max\{X(g), Y(g)\}$ for all $g$ for which this maximum exists, 
    \item [$\bullet$] $f\in\supp(Z)$, and
    \item[$\bullet$] $Z(e)=0$
   \end{itemize} 
 {\em (Circuit Elimination)}.
\end{itemize}
\end{list}
\end{defn}

As for matroids, there are cryptomorphisms allowing us to speak of ``the oriented matroid with chirotopes $\chi$ and $-\chi$, vector set $\Vom$, and circuit set $\Com$''. The {\em underlying matroid} of this oriented matroid has basis set $\supp(\chi)$, vector set $\{\supp(X): X\in\Vom\}$, and circuit set 
$\{\supp(X): X\in\Com\}$. 
%We define the {\em rank} of an oriented matroid to be the rank of its chirotope or, equivalently, the rank of the underlying matroid. 

\subsubsection{Modularity of signed circuit pairs}\label{appendix:OMmod}

The hypothesis in the Circuit Elimination Axiom $\Com 2$ in Definition \ref{OMaxioms} can be weakened to consider only {\em modular} circuits. The resulting Modular Elimination is equivalent to the full Circuit Elimination Axiom in the context of oriented matroids. In the setting of complex matroids, Modular Elimination is an axiom, while the full Circuit Elimination does not hold. (A counterexample is given in Section \ref{ex:strongelim}.)

\begin{prop}[Modular Elimination Axiom \cite{LasV}]\label{OMmodelim} In
  the definition of signed circuits (Definition \ref{OMaxioms}.3), % if
  % the set $\{\supp(C)\mid C\in \Com\}$ is known to be the set of
  % circuits of a matroid $M$ then 
the Circuit Elimination Axiom can be replaced by the {\em Modular Elimination Axiom:}
\begin{itemize}
\item [($\Com 2'$)] for every $X,Y\in\Com$ and $e,f\in E$ such that
\begin{itemize}
\item  [$\bullet$] $\supp(X)$, $\supp(Y)$ is a modular pair in $\{\supp(Z)\mid Z\in\Com\}$, 
\item [$\bullet$] $X(e)=-Y(e)$, and 
\item [$\bullet$] $X(f)\neq-Y(f)$, 
\end{itemize} 
there is some $Z\in\Com$ with
$f\in\supp(Z)$, $e\not\in\supp(Z)$, and for all $g$, $Z(g)\in\{0,X(g), Y(g)\}$.
\end{itemize}
\end{prop}

\begin{remark}
Proposition \ref{OMmodelim} does not assume that an underlying matroid
is already given, and thus it is a nontrivial strengthening of the
axiomatization for oriented matroids called `modular elimination' in
the book \cite{BLSWZ} and due to Las Vergnas \cite{LasV}.
\end{remark}

\subsubsection{Orthogonality}

\begin{defn} Two sign vectors $X,Y\in\{+,0,-\}^E$ are defined to be {\em orthogonal} if $0\in\conv(\{X(e)Y(e)\mid e\in E\})$. 
\end{defn}
 This definition is inspired by orthogonality of vectors in $\mathbb R^n$, and it leads to a definition of orthogonality of oriented matroids that nicely models  orthogonality of real vector spaces.

\begin{thm}\label{dualOM} If $\Mom$ is an oriented matroid with ordered ground set $E$, chirotope $\chi: E^r\to S^0\cup\{0\}$, circuit set $\Com$,
and vector set $\Vom$, then there is an oriented matroid $\Mom^*$  with 
\begin{itemize}
\item[(0)] ground set $E$, 
\item[(1)] chirotope $\chi^*: E^{|E|-r}\to\{0,+,-\}$ given by
$$\chi^*(x_1, \ldots, x_{n-r})=\chi(y_1, \ldots, y_r)\sigma(x_1, \ldots, x_{n-r},y_1, \ldots, y_r),$$
where $\{y_1, \ldots, y_r\}=E\setminus \{x_1, \ldots, x_{n-r}\}$ and $\sigma$ denotes the sign of the indicates permutation of $E$,
\item[(2)] covector set $\Vom^*=\Vom^\perp$, and
\item[(3)] cocircuit set $\Com^*=\min(\Vom^\perp-\{\zero\})$, \\where $\min$ denotes support minimality.
\end{itemize}

The underlying matroid of $\Mom^*$ is the dual of the underlying matroid of $\Mom$. If $\Mom$ is realized by a matrix with row space $W$, then $\Mom^*$ is realized by a matrix with row space $W^\perp$.
\end{thm}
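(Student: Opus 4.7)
The plan is to first verify that $\chi^*$ as defined in (1) satisfies the chirotope axioms $(\chi 1)$--$(\chi 3)$, then derive the descriptions of $\Vom^*$ and $\Com^*$ in (2)--(3) by invoking the chirotope-to-vector and chirotope-to-circuit cryptomorphisms for oriented matroids. That $\chi^*$ is alternating is immediate, since transposing two of the $x_i$'s multiplies the shuffle sign $\sigma(x_1,\ldots,x_{n-r},y_1,\ldots,y_r)$ by $-1$ while leaving $\chi(y_1,\ldots,y_r)$ fixed. That $\chi^*$ is nonzero follows from picking any basis $\{y_1,\ldots,y_r\}$ of the underlying matroid of $\Mom$ and evaluating $\chi^*$ on its complement in any fixed order: the result is $\pm\chi(y_1,\ldots,y_r)$, which is nonzero by $(\chi 1)$.

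The principal technical obstacle is verifying that $\chi^*$ satisfies the Grassmann--Pl\"ucker axiom $(\chi 3)$. My plan is to expand each summand
\[
(-1)^k\,\chi^*(x_1,\ldots,\widehat{x_k},\ldots,x_{n-r+1})\,\chi^*(x_k,y_1,\ldots,y_{n-r-1})
\]
according to the defining formula of $\chi^*$. Each factor contributes a value of $\chi$ on a complementary $r$-tuple together with a shuffle sign; after carefully combining these signs, the $k$-th term rewrites---up to an overall factor that does not depend on $k$---as a term in a Grassmann--Pl\"ucker relation for $\chi$ itself, applied to the $(r+1)$-tuple and $(r-1)$-tuple recovered from the complements of the $x_i$'s and $y_j$'s in $E$. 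The axiom $(\chi 3)$ for $\chi$ then forces $0$ to lie in the convex hull, and the same conclusion propagates back to $\chi^*$. This careful sign bookkeeping is the signed refinement of the classical argument that the complements of the bases of a matroid form the bases of the dual matroid.

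Once $\chi^*$ is known to be a chirotope, it determines an oriented matroid $\Mom^*$ whose signed circuits and vectors are prescribed by the oriented-matroid cryptomorphisms. To identify them with $\min(\Vom^\perp-\{\zero\})$ and $\Vom^\perp$, I would prove that every basic signed cocircuit of $\Mom$ computed from $\chi$ is orthogonal to every basic signed circuit of $\Mom$; this orthogonality is a direct consequence of a $3$-term Grassmann--Pl\"ucker relation for $\chi$. Closure under composition then yields $\Vom^*\subseteq\Vom^\perp$, and equality follows because on the level of supports the two sides already agree by Theorem \ref{dual:mat}. In particular, the underlying matroid of $\Mom^*$ is indeed dual to that of $\Mom$. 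Finally, for realizability: if $\Mom$ is realized by a matrix $M$ with row space $W$ and $N$ is any matrix with row space $W^\perp$, the Pl\"ucker identity (a direct consequence of Cauchy--Binet) expresses each maximal minor of $N$ as the complementary maximal minor of $M$ multiplied by the same shuffle sign $\sigma$ (up to a global nonzero scalar); taking signs recovers the formula in (1), showing that $N$ realizes $\Mom^*$.
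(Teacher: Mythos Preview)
The paper does not actually prove Theorem~\ref{dualOM}: it is stated in the background section on oriented matroids as a known result (implicitly referred to~\cite{BLSWZ}), with no proof given. So there is no ``paper's own proof'' to compare against here.

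That said, your proposal is correct and is exactly the standard argument. Moreover, it matches in outline the proof the paper \emph{does} give for the analogous complex statement (the lemma in Section~\ref{sec:phiro:dual} showing that $\phr^*$ is a phirotope): there, too, axioms ($\phr$1) and ($\phr$2) are dispatched immediately, and ($\phr$3) is verified by writing out the two shuffle signs $\sigma_1,\sigma_2$ attached to the complementary tuples, checking that their product is independent of $k$, and then recognizing the remaining expression as (the complex conjugate of) a Grassmann--Pl\"ucker relation for $\phr$. Your sketch of the ($\chi$3) verification is precisely this computation specialized to $S^0\cup\{0\}$. Your treatment of (2)--(3) via orthogonality of basic circuits and cocircuits plus the support-level duality of Theorem~\ref{dual:mat}, and of realizability via the complementary-minor identity, is also the standard route and is sound.
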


This $\Mom^*$ is called the {\em dual} to $\Mom$. The vectors of $\Mom^*$ are the {\em covectors} of $\Mom$, and the circuits of $\Mom^*$ are called the {\em cocircuits} of $\Mom$.

\bibliographystyle{abbrv}
\bibliography{biblioCM}

\end{document}